\numberwithin{equation}{section}
\newtheorem{theorem}{Theorem}[section]
\newtheorem{lem}{Lemma}[section]
\newtheorem{rem}{Remark}[section]
\newtheorem{prop}{Proposition}[section]
\newcounter{hypA}
\newenvironment{hypA}{\refstepcounter{hypA}\begin{itemize}
  \item[({\bf A\arabic{hypA}})]}{\end{itemize}}
\newcounter{hypB}
\newcounter{hypD}
\date{}
\begin{document}

\begin{center}

{\Large \textbf{Antithetic Multilevel Particle Filters}}

\vspace{0.5cm}

BY  AJAY JASRA$^{1}$, MOHAMED MAAMA$^{1}$ \& HERNANDO OMBAO$^{2}$

{\footnotesize $^{1}$Applied Mathematics and Computational Science Program, \footnotesize $^{2}$Statistics Program, Computer, Electrical and Mathematical Sciences and Engineering Division, King Abdullah University of Science and Technology, Thuwal, 23955-6900, KSA.}
{\footnotesize E-Mail:\,} \texttt{\emph{\footnotesize ajay.jasra@kaust.edu.sa, maama.mohamed@gmail.com, hernando.ombao@kaust.edu.sa}}

\end{center}

\begin{abstract}
In this paper we consider the filtering of partially observed multi-dimensional diffusion processes that are observed regularly at discrete times. This is a challenging problem which requires the use of advanced numerical schemes based upon time-discretization of the diffusion process and then the application of particle filters. Perhaps the state-of-the-art method for moderate dimensional problems is the multilevel particle filter of \cite{mlpf}. This is a method that combines multilevel Monte Carlo and particle filters. The approach in that article is based intrinsically upon an Euler discretization method. We develop a new particle filter based upon the antithetic truncated Milstein scheme of \cite{ml_anti}. We show that for a class of diffusion problems, for $\epsilon>0$ given, that the cost to produce a mean square error (MSE) in estimation of the filter, of $\mathcal{O}(\epsilon^2)$ is $\mathcal{O}(\epsilon^{-2}\log(\epsilon)^2)$. In the case of multidimensional diffusions with non-constant diffusion coefficient, the method of \cite{mlpf} has a cost of $\mathcal{O}(\epsilon^{-2.5})$ to achieve the same MSE. We support our theory with numerical results in several examples. 
\\
\noindent \textbf{Key words}: Multilevel Monte Carlo, Particle Filters, Diffusion Processes, Filtering.
\end{abstract}

\section{Introduction}

We are given a diffusion process:
\begin{equation}\label{eq:diff_proc}
dX_t = \alpha(X_t)dt + \beta(X_t)dW_t
\end{equation}
where $X_0=x_0\in\mathbb{R}^d$ is given, $\alpha:\mathbb{R}^d\rightarrow\mathbb{R}^d$, $\beta:\mathbb{R}^d\rightarrow\mathbb{R}^{d\times d}$ and $\{W_t\}_{t\geq 0}$ is a
standard $d-$dimensional Brownian motion. We consider the problem where this is a latent process and we observe it only through a sequence of discrete and regular in time data and in particular where the structure of the observations are a special case of a state-space or hidden Markov model (HMM). This class of models has a wide class of applications from finance, econometrics and engineering; see \cite{cappe,delmoral1} for example.

In this paper we consider the problem of filtering and normalizing constant (marginal likelihood) estimation. For standard HMMs, that is, where the latent process is a discrete-time Markov chain, this is a notoriously challenging problem, requiring the application of advanced numerical (Monte Carlo) methods such as the particle filter (PF); see e.g.~\cite{cappe,delmoral1} for a survey. The scenario which we consider is even more challenging as typically the transition density associated to \eqref{eq:diff_proc}, assuming it exists, is often intractable and this can limit the application of PFs; although there are some exceptions \cite{fearn}, and exact simulation methods \cite{beskos1,blanchet} these are often not general enough or too expensive to be of practical use in the filtering problem. As a result, we focus on the case where one discretizes the process \eqref{eq:diff_proc} in time.

In recent years, one of the most successful methods for improving Monte Carlo based estimators, associated to probability laws under time discretization is the multilevel Monte Carlo (MLMC) method \cite{giles,giles1,hein}. This is an approach that considers a collapsing sum representation of an expectation w.r.t.~a probability law at given level of discretization. The collapsing element is associated to differences in expectations with increasingly coarse discretization levels with a final (single) expectation at a course level. Then if one can sample appropriate couplings of the probability laws at consecutive levels, then it is possible to reduce the cost to achieve a certain mean square error (MSE) in several examples, diffusions being one of them. This method has been combined with the PF in several articles, resulting in the multilevel particle filter (MLPF); see \cite{mlpf,mlpf1,levymlpf} and also \cite{ml_rev} for a review and \cite{ub_pf} for extensions. 

The method of \cite{mlpf} is intrinsically based upon the ability to sample couplings of discretized diffusion processes.
In almost all of the applications that we are aware of, this is based upon the synchronous coupling of Brownian motion for an Euler or Milstein scheme. These particular couplings inherit properties of the strong error of the associated time discretization. The importance of the strong error rate is that it can help determine the efficiency gain of any MLMC approach of which the MLPF is one. As is well-known the Milstein scheme, which is of higher (strong) order than the Euler method, in dimensions ($d> 2$) of two or larger can be difficult to implement numerically due to the need of simulating L\'evy areas. \cite{ml_anti} consider computing expectations associated to the law of  \eqref{eq:diff_proc}
at a given terminal time. They show 
that when eliminating the L\'evy area and including an antithetic type modification of the traditional 
MLMC estimator, one can maintain the strong error rate of the Milstein scheme even in multi-dimensions. Moreover the simulation is of a cost that is of a same order as Euler discretizations which are the ones that would (most often) be used in multi-dimensional cases.

We develop a new particle filter based upon the antithetic truncated Milstein scheme of \cite{ml_anti}. We show that for a class of diffusion problems, for $\epsilon>0$ given, that the cost to produce a MSE in estimation of the filter, of $\mathcal{O}(\epsilon^2)$ is $\mathcal{O}(\epsilon^{-2}\log(\epsilon)^2)$. In the case of multi-dimensional diffusions with non-constant diffusion coefficient, the method of \cite{mlpf} has a cost of $\mathcal{O}(\epsilon^{-2.5})$ to achieve the same MSE. We also show how this new PF can be used to compute the normalizing constant recursively in time, but we do not prove anything on the efficiency gains. All of our theory is confirmed in several numerical examples.

This article is structured as follows. In Section \ref{sec:model} we give details on the model to be considered and the time-discretization associated to the process \eqref{eq:diff_proc}. In Section \ref{sec:algo} we present our algorithm.
Section \ref{sec:math} details our mathematical results and their algorithmic implications. In Section \ref{sec:numerics}
we give numerical results that support our theory. Most of our mathematical proofs can be found in the appendices at the end of the article.

\section{Model and Discretization}\label{sec:model}

\subsection{State-Space Model}

Our objective is to consider the filtering problem and normalizing constant estimation for a specific class of state-space models associated to the diffusion process \eqref{eq:diff_proc}. In particular, we will assume that \eqref{eq:diff_proc} is subject to a certain assumption (A\ref{ass:diff1}) which described in the Appendix, Section \ref{app:mil}. This assumption is certainly strong enough to guarantee that \eqref{eq:diff_proc} has a unique solution and in addition that the diffusion has a transition probability which we denote, over 1 unit time, as $P(x,dx')$. Then we are interested in filtering associated to the state-space model
$$
p(dx_{1:n},y_{1:n}) = \prod_{k=1}^n P(x_{k-1},dx_k)g(x_k,y_k)
$$
where $y_{1:n}=(y_1,\dots,y_n)^{\top}\in\mathsf{Y}^n$ are observations each with conditional density $g(x_k,\cdot)$. The filter associated to this measure is for $k\in\mathbb{N}$
$$
\pi_k(dx_k) = \frac{\int_{\mathsf{X}^{k-1}} p(dx_{1:k},y_{1:k})}{\int_{\mathsf{X}^{k}} p(dx_{1:k},y_{1:k})}
$$
where $\mathsf{X}=\mathbb{R}^d$. The denominator 
$$
p(y_{1:k}) := \int_{\mathsf{X}^{k}} p(dx_{1:k},y_{1:k})
$$
is the normalizing constant or marginal likelihood. This latter object is often used in statistics for model selection.

In practice, we assume that working directly with $P$ is not possible, either due to intractability or cost of simulation. We will propose to work with an alternative collection of filters based upon time discretization which is what we now describe.

\subsection{Time Discretization}

Typically one must time discretize \eqref{eq:diff_proc} and we consider a time discretization at equally spaced times, separated by $\Delta_l=2^{-l}$. To continue with our exposition, we define the $d-$vector, $H:\mathbb{R}^{2d}\times\mathbb{R}^+\rightarrow\mathbb{R}^d$, 
$H_{\Delta}(x,z)=(H_{\Delta,1}(x,z),\dots,H_{\Delta,d}(x,z))^{\top}$ where for $i\in\{1,\dots,d\}$
\begin{eqnarray*}
H_{\Delta,i}(x,z) & = &  \sum_{(j,k)\in\{1,\dots,d\}^2} h_{ijk}(x)(z_jz_k-\Delta) \\
h_{ijk}(x) & = & \frac{1}{2}\sum_{m\in\{1,\dots,d\}} \beta_{mk}(x)\frac{\partial \beta_{ij}(x)}{\partial x_m}.
\end{eqnarray*}
We denote by $\mathcal{N}_d(\mu,\Sigma)$ the $d-$dimensional Gaussian distribution with mean vector $\mu$
and covariance matrix $\Sigma$; if $d=1$ we drop the subscript $d$. $I_d$ is the $d\times d$ identity matrix.
A single level version of the truncated Milstein scheme, which is the focus of this article, is presented in Algorithm \ref{alg:milstein_sl}.
Ultimately, we will consider the antithetic truncated Milstein scheme in \cite{ml_anti},
which can be simulated as described in Algorithm \ref{alg:milstein}. The method in Algorithm \ref{alg:milstein} is a means to approximate differences of expectations w.r.t.~discretized laws at consecutive levels as is used in MLMC and this latter approach will be detailed in Section \ref{sec:algo}.

We denote the transition kernel induced by Algorithm \ref{alg:milstein_sl} as $P^l(x,dx')$ and for a given $l\in\mathbb{N}$ with  we will be concerned with the filter induced by the following joint measure
$$
p^l(dx_{1:n},y_{1:n}) = \prod_{k=1}^n P^l(x_{k-1},dx_k) g(x_k,y_k).
$$
The filter associated to this measure is for $k\in\mathbb{N}$
$$
\pi_k^l(dx_k) = \frac{\int_{\mathsf{X}^{k-1}} p^l(dx_{1:k},y_{1:k})}{\int_{\mathsf{X}^{k}} p^l(dx_{1:k},y_{1:k})}.
$$
The associated normalizing constant is
$$
p^l(y_{1:k}) = \int_{\mathsf{X}^{k}} p^l(dx_{1:k},y_{1:k}).
$$
Ultimately, we will seek to approximate expectations w.r.t.~$\pi_k^l$ and to estimate $p^l(y_{1:k})$ as $k$ increases and the study the MSE of a numerical approximation, relative to considering the exact filter  $\pi_k$ and associated normalizing constant $p(y_{1:k})$.

\begin{algorithm}[H]
\begin{enumerate}
\item{Input level $l$ and starting point $x_0^l$.}
\item{Generate $Z_k\stackrel{\textrm{i.i.d.}}{\sim}\mathcal{N}_d(0,\Delta_l I_d)$, $k\in\{1,2,\dots,\Delta_l^{-1}\}$.}
\item{Generate level $l$: for $k\in\{0,1,\dots,\Delta_l^{-1}-1\}$ with $X_0^l=x_0^l$
$$
X_{(k+1)\Delta_l}^l = X_{k\Delta_l}^l + \alpha(X_{k\Delta_l}^l)\Delta_l + \beta(X_{k\Delta_l}^l)Z_{k+1} + H_{\Delta_l}(X_{k\Delta_l}^l,Z_{k+1}).
$$
}
\item{Output $X_1^l$.}
\end{enumerate}
\caption{Truncated Milstein Scheme on $[0,1]$.}
\label{alg:milstein_sl}
\end{algorithm}

\begin{algorithm}[H]
\begin{enumerate}
\item{Input level $l$ and starting points $(x_0^l,x_0^{l-1},x_0^{l,a})$.}
\item{Generate $Z_k\stackrel{\textrm{i.i.d.}}{\sim}\mathcal{N}_d(0,\Delta_l I_d)$, $k\in\{1,2,\dots,\Delta_l^{-1}\}$.}
\item{Generate level $l$: for $k\in\{0,1,\dots,\Delta_l^{-1}-1\}$ with $X_0^l=x_0^l$
$$
X_{(k+1)\Delta_l}^l = X_{k\Delta_l}^l + \alpha(X_{k\Delta_l}^l)\Delta_l + \beta(X_{k\Delta_l}^l)Z_{k+1} + H_{\Delta_l}(X_{k\Delta_l}^l,Z_{k+1}).
$$
}
\item{Generate level $l-1$: for $k\in\{0,1,\dots,\Delta_{l-1}^{-1}-1\}$ with $X_0^{l-1}=x_0^{l-1}$
\begin{eqnarray*}
X_{(k+1)\Delta_{l-1}}^{l-1} & = & X_{k\Delta_{l-1}}^{l-1} + \alpha(X_{k\Delta_{l-1}}^{l-1})\Delta_{l-1} + \beta(X_{k\Delta_{l-1}}^{l-1})\{Z_{2(k+1)-1}+Z_{2(k+1)}\} +\\ & & H_{\Delta_{l-1}}(X_{k\Delta_{l-1}}^{l-1},Z_{2(k+1)-1}+Z_{2(k+1)}).
\end{eqnarray*}
}
\item{Generate antithetic level $l$: for $k\in\{0,1,\dots,\Delta_l^{-1}-1\}$ with $X_0^{l,a}=x_0^{l,a}$
$$
X_{(k+1)\Delta_l}^{l,a} = X_{k\Delta_l}^{l,a} + \alpha(X_{k\Delta_l}^{l,a})\Delta_l + \beta(X_{k\Delta_l}^{l,a})Z_{\rho_k} + H_{\Delta_l}(X_{k\Delta_l}^{l,a},Z_{\rho_{k}})
$$
where $\rho_k=k+2\mathbb{I}_{\{0,2,4,\dots\}}(k)$.}
\item{Output $(X_1^l,X_1^{l-1},X_1^{l,a})$.}
\end{enumerate}
\caption{Antithetic Truncated Milstein Scheme on $[0,1]$.}
\label{alg:milstein}
\end{algorithm}

\section{Antithetic Multilevel Particle Filters}\label{sec:algo}

\subsection{Multilevel Particle Filter}

Let $\varphi\in\mathcal{B}_b(\mathsf{X})$ with the latter the collection of bounded and measurable real-valued functions: we write $\pi_k^l(\varphi)=\int_{\mathsf{X}}\varphi(x_k)\pi_k^l(dx_k)$. Let $(\underline{L},\overline{L})\in\mathbb{N}_0\times\mathbb{N}$, with $\underline{L}<\overline{L}$, be given, our objective is the approximation of $\pi_k^{\overline{L}}(\varphi)$ sequentially in time. This can be achieved using the MLMC identity
\begin{equation}\label{eq:mlmc_id}
\pi_k^{\overline{L}}(\varphi) = \pi_k^{\underline{L}}(\varphi) + \sum_{l=\underline{L}+1}^{\overline{L}} [\pi_{k}^l-\pi_{k}^{l-1}](\varphi).
\end{equation}
As noted in the introduction, the computational cost of approximating the R.H.S.~of \eqref{eq:mlmc_id} versus the L.H.S.~can be lower, when seeking to achieve a pre-specified MSE. The R.H.S.~of \eqref{eq:mlmc_id} can be approximated by the method in \cite{mlpf}, but we shall consider a modification which can improve on the computational effort to achieve a given mean square error.

We begin first with approximating $\pi_k^{\underline{L}}(\varphi)$. This can be approximated using the standard PF as described in Algorithm \ref{alg:pf}. It is by now standard in the literature that 
$$
\pi_k^{N_{\underline{L}},\underline{L}}(\varphi) := \sum_{i=1}^{N_{\underline{L}}}\frac{g(X_k^{i,\underline{L}},y_k)}{\sum_{j=1}^{N_{l}} g(X_k^{j,\underline{L}},y_k)}\varphi(X_k^{i,\underline{L}})
$$
will converge almost surely to $\pi_k^{\underline{L}}(\varphi)$ where the samples $X_k^{i,\underline{L}}$ are after Step 1.~or 3.~of Algorithm \ref{alg:pf}.

\begin{algorithm}[h]
\begin{enumerate}
\item{Initialization: For $i\in\{1,\dots,N_{\underline{L}}\}$, generate $X_1^{i,\underline{L}}$ independently using Algorithm \ref{alg:milstein_sl} with level $\underline{L}$ and starting point $x_0$. Set $k=1$.}
\item{Resampling: Compute 
\begin{equation}\label{eq:res_weights}
\left(\frac{g(x_k^{1,\underline{L}},y_k)}{\sum_{j=1}^{N_{\underline{L}}} g(x_k^{j,\underline{L}},y_k)},\dots,\frac{g(x_k^{N_{\underline{L}},\underline{L}},y_k)}{\sum_{j=1}^{N_{\underline{L}}} g(x_k^{j,\underline{L}},y_k)}\right).
\end{equation}
For $i\in\{1,\dots,N_{\underline{L}}\}$ generate an index $a_k^i$ using the probability mass function in \eqref{eq:res_weights} and set $\tilde{X}_k^{i,\underline{L}}=X_k^{a_k^i,\underline{L}}$.
Then set $X_k^{1:N_{\underline{L}},\underline{L}}=\tilde{X}_k^{1:N_{\underline{L}},\underline{L}}$.
}
\item{Sampling:  For $i\in\{1,\dots,N_{\underline{L}}\}$, generate $X_{k+1}^{i,\underline{L}}|X_k^{1:N_{\underline{L}},\underline{L}}$ conditionally independently using Algorithm \ref{alg:milstein_sl} with level $\underline{L}$ and starting point $x_k^{i,\underline{L}}$. Set $k=k+1$.}
\end{enumerate}
\caption{Particle Filter at Level $\underline{L}$.}
\label{alg:pf}
\end{algorithm}

Approximating the differences $[\pi_{k}^l-\pi_{k}^{l-1}](\varphi)$ takes a little more work and to that end, we present a new resampling method in \ref{alg:max_coup} which we shall employ. This resampling method gives rise to the new coupled particle filter which is described in Algorithm \ref{alg:coup_pf}. In this context one can define for $(k,l,N_l,\varphi)\in\mathbb{N}^3\times\mathcal{B}_b(\mathsf{X})$:
\begin{eqnarray}
\pi_k^{N_l,l}(\varphi) & := & \sum_{i=1}^{N_{l}} W_k^{i,l}\varphi(X_k^{i,l}) \label{eq:pi_l_def}\\
\bar{\pi}_k^{N_l,l-1}(\varphi) & := & \sum_{i=1}^{N_{l}} \bar{W}_k^{i,l-1}\varphi(X_k^{i,l-1}) \nonumber\\
\pi_k^{N_l,l,a}(\varphi) & := & \sum_{i=1}^{N_{l}} W_k^{i,l,a}\varphi(X_k^{i,l,a}) \nonumber
\end{eqnarray}
and estimate $[\pi_{k}^l-\pi_{k}^{l-1}](\varphi)$  as
$$
[\pi_{k}^l-\pi_{k}^{l-1}]^{N_l}(\varphi) := \frac{1}{2}\{\pi_k^{N_l,l}(\varphi) + \pi_k^{N_l,l,a}(\varphi)\} - \bar{\pi}_k^{N_l,l-1}(\varphi)
$$
The samples in $(\pi_k^{N_l,l}(\varphi), \pi_k^{N_l,l,a}(\varphi),\bar{\pi}_k^{N_l,l-1}(\varphi))$ are obtained after Step 1.~or 3.~in Algorithm \ref{alg:coup_pf}.

To conclude the antithetic multilevel particle filter (AMLPF) is then as follows:
\begin{enumerate}
\item{At level $\underline{L}$ run  Algorithm \ref{alg:pf} sequentially in time.}
\item{At level $l\in\{\underline{L}+1, \underline{L}+2,\dots,\overline{L}\}$ independently of all other $l$ and of Step 1.~run  Algorithm \ref{alg:pf} sequentially in time.}
\end{enumerate}
To approximate the R.H.S.~of \eqref{eq:mlmc_id} at any time $k$, one can use the estimator:
$$
\pi_k^{\overline{L},ML}(\varphi) := \pi_k^{N_{\underline{L}},\underline{L}}(\varphi) + \sum_{l=\underline{L}+1}^{\overline{L}}[\pi_{k}^l-\pi_{k}^{l-1}]^{N_l}(\varphi)
$$
where $\pi_k^{N_{\underline{L}},\underline{L}}(\varphi)$ has the definition \eqref{eq:pi_l_def} when $l=\underline{L}$ and has been obtained using  Algorithm \ref{alg:pf}.

The normalizing constant $p^{\overline{L}}(y_{1:k})$ can also be approximated using the AMLPF. We make the definitions for any 
$(k,l,N_l,\varphi)\in\mathbb{N}^3\times\mathcal{B}_b(\mathsf{X})$:
\begin{eqnarray}
\eta_k^{N_l,l}(\varphi) & := & \sum_{i=1}^{N_{l}} \varphi(X_k^{i,l}) \label{eq:eta_l_def}\\
\bar{\eta}_k^{N_l,l-1}(\varphi) & := & \sum_{i=1}^{N_{l}} \varphi(X_k^{i,l-1}) \nonumber\\
\eta_k^{N_l,l,a}(\varphi) & := & \sum_{i=1}^{N_{l}} \varphi(X_k^{i,l,a}).\nonumber
\end{eqnarray}
Then we have the following approximation of 
$$
p^{\overline{L},ML}(y_{1:k}) := \prod_{j=1}^{k}\eta_j^{N_{\underline{L}},\underline{L}}(g_j) + \sum_{l=\underline{L}+1}^{\overline{L}}\Big\{\tfrac{1}{2}\prod_{j=1}^{k}\eta_j^{N_l,l,}(g_j) + 
\tfrac{1}{2}\prod_{j=1}^{k}\eta_j^{N_l,l,a}(g_j) -
\prod_{j=1}^{k}\bar{\eta}_j^{N_{l-1},l-1,}(g_j)
\Big\}
$$
where $g_j(x)=g(x,y_j)$ is used as a short-hand notation and $g_j$ is assumed bounded and measurable for each $j\in\mathbb{N}$ and $\eta_k^{N_{\underline{L}},\underline{L}}(\varphi)$ has the definition \eqref{eq:eta_l_def} when $l=\underline{L}$ and has been obtained using  Algorithm \ref{alg:pf}.. This estimator is unbiased in terms of $p^{\overline{L}}(y_{1:k})$,  (see e.g.~\cite{delmoral1}) 
and is consistent as the number of samples used at each level grows.


\begin{algorithm}[h]
\begin{enumerate}
\item{Input: $(U_1^{1:N},U_2^{1:N},U_3^{1:N})$ and probabilities $(W_1^{1:N},W_2^{1:N},W_3^{1:N})$.}
\item{For $i\in\{1,\dots,N\}$ generate $U\sim\mathcal{U}_{[0,1]}$ (uniform distribution on $[0,1]$)
\begin{itemize}
\item{If $U<\sum_{i=1}^N \min\{W_1^i,W_2^i,W_3^i\}$ generate $a^i\in\{1,\dots,N\}$ using the probability mass function
$$
\mathbb{P}(i) = \frac{\min\{W_1^i,W_2^i,W_3^i\}}{\sum_{j=1}^N \min\{W_1^j,W_2^j,W_3^j\}}
$$
and set $\tilde{U}_j^i=U_j^{a^i}$, 
$j\in\{1,2,3\}$.}
\item{Otherwise generate $(a_1^i,a_2^i,a_3^i)\in\{1,\dots,N\}^3$ using any coupling of the probability mass functions:
$$
\mathbb{P}_j(i) = \frac{W_j^i-\min\{W_1^i,W_2^i,W_3^i\}}{\sum_{k=1}^N[W_j^k-\min\{W_1^k,W_2^k,W_3^k\}]}
$$
and set $\tilde{U}_j^i=U_j^{a_j^i}$,  $j\in\{1,2,3\}$.}
\end{itemize}
}
\item{Set: $U_j^i=\tilde{U}_j^{i}$, $(i,j)\in\{1,\dots,N\}\times\{1,2,3\}$.}
\item{Output: $(U_1^{1:N},U_2^{1:N},U_3^{1:N})$.}
\end{enumerate}
\caption{Maximal Coupling-Type Resampling}
\label{alg:max_coup}
\end{algorithm}

\begin{algorithm}[h]
\begin{enumerate}
\item{Initialization: For $i\in\{1,\dots,N_{\underline{l}}\}$, generate $U_1^{i,l} = (X_1^{i,l},\bar{X}_1^{i,l-1},X_1^{i,l,a})$ independently using Algorithm \ref{alg:milstein} with level $l$ and starting points $(x_0^{l},\bar{x}_0^{l-1},x_0^{l,a})$. Set $k=1$.}
\item{Coupled Resampling: Compute 
\begin{eqnarray*}
W_k^{1:N_l,l} & = &  \left(\frac{g(x_k^{1,l},y_k)}{\sum_{j=1}^{N_{l}} g(x_k^{j,l},y_k)},\dots,\frac{g(x_k^{N_l,l},y_k)}{\sum_{j=1}^{N_{l}} g(x_k^{j,l},y_k)}\right) \\
\bar{W}_k^{1:N_l,l-1} & = &  \left(\frac{g(\bar{x}_k^{1,l-1},y_k)}{\sum_{j=1}^{N_{l}} g(\bar{x}_k^{j,l-1},y_k)},\dots,\frac{g(\bar{x}_k^{N_l,l-1},y_k)}{\sum_{j=1}^{N_{l}} g(\bar{x}_k^{j,l-1},y_k)}\right) \\
W_k^{1:N_l,l,a} & = & \left(\frac{g(x_k^{1,l,a},y_k)}{\sum_{j=1}^{N_{l}} g(x_k^{j,l,a},y_k)},\dots,\frac{g(x_k^{N_l,l,a},y_k)}{\sum_{j=1}^{N_{l}} g(x_k^{j,l,a},y_k)}\right).
\end{eqnarray*}
Then using
$(X_k^{1:N_l,l},\bar{X}_k^{1:N_l,l-1}, X_k^{1:N_l,l,a})$ and $(W_k^{1:N_l,l},\bar{W}_k^{1:N_l,l-1},W_k^{1:N_l,l,a})$ call Algorithm \ref{alg:max_coup}.
}
\item{Coupled Sampling: For $i\in\{1,\dots,N_{\underline{L}}\}$, generate $U_{k+1}^{i,l}|U_k^{1:N_{l},l}$ conditionally independently using Algorithm \ref{alg:milstein} with level $l$ and starting point $(x_k^{i,l},\bar{x}_k^{i,l-1},x_k^{i,l,a})$. Set $k=k+1$.}
\end{enumerate}
\caption{A New Coupled Particle Filter for $l\in\mathbb{N}$ given.}
\label{alg:coup_pf}
\end{algorithm}

\section{Mathematical Results}\label{sec:math}

We now present our main mathematical result which relates to the $\mathbb{L}_2-$convergence of our estimator of the filter. We remark that we will consider the estimation of the normalizing constant in a sequel paper.
The result is proved under two assumptions (A\ref{ass:diff1}) and (A\ref{ass:g}), which are stated in the Appendix; see Appendix \ref{app:intro} for more information. $\mathcal{C}_b^2(\mathsf{X},\mathbb{R})$ denotes the collection of twice continuously differentiable functions from $\mathsf{X}$ to $\mathbb{R}$ with bounded derivatives of all order 1 and 2. For $\varphi\in\mathcal{B}_b(\mathsf{X})$
we use the notation $\pi_k^{\overline{L}}(\varphi)=\int_{\mathsf{X}}\varphi(x)\pi_k^{\overline{L}}(dx)$.
$\mathbb{E}$ denotes the expectation w.r.t.~the law used to generate the AMLPF.

\begin{theorem}\label{theo:main_theo}
Assume (A\ref{ass:diff1}-\ref{ass:g}). Then for any $(k,\varphi)\in\mathbb{N}\times
\mathcal{B}_b(\mathsf{X})\cap\mathcal{C}_b^2(\mathsf{X},\mathbb{R})$ there exists a $C<+\infty$ such that for any $(\underline{L},\overline{L},l,N_{\underline{L}},\dots,N_{\overline{L}},\varepsilon)\in\mathbb{N}_0^2\times\{\underline{L},\dots,\overline{L}\}\times
\mathbb{N}^{\overline{L}-\underline{L}+1}
\times(0,1/2)$ with  $\underline{L}<\overline{L}$
$$
\mathbb{E}\left[\left(\pi_k^{\overline{L},ML}(\varphi)-\pi_k^{\overline{L}}(\varphi)\right)^2\right]
 \leq 
C\Bigg(\frac{1}{N_{\underline{L}}} + 
\sum_{l=\underline{L}+1}^{\overline{L}}\Bigg\{
\frac{\Delta_l}{N_l} +
\frac{\Delta_l^{\tfrac{1}{2}}}{N_l^2}
+ \frac{\Delta_l^{\tfrac{1}{2}-\varepsilon}}{N_l^{2}}
\Bigg\}
+ 
 $$
 $$
\sum_{(l,q)\in\mathsf{D}_{\underline{L},\overline{L}}}\Bigg\{
\frac{1}{N_l}\Bigg\{
\Bigg(\Delta_l
+  \frac{\Delta_l^{\tfrac{1}{2}-\varepsilon}}{N_l}
\Bigg)^{\tfrac{1}{2}} + 
\left(\Delta_l^{\tfrac{1}{2}}
+ \frac{\Delta_l^{\tfrac{1}{2}-\varepsilon}}{N_l}
\right)
\Bigg\}
\frac{1}{N_q}\Bigg\{\left(\Delta_l
+ \frac{\Delta_q^{\tfrac{1}{2}-\varepsilon}}{N_q}
\right)^{\tfrac{1}{2}} +
\left(\Delta_q^{\tfrac{1}{2}}
+ \frac{\Delta_q^{\tfrac{1}{2}-\varepsilon}}{N_q}
\right)
\Bigg\}
\Bigg\}
\Bigg)
$$
where $\mathsf{D}_{\underline{L},\overline{L}}=\{(l,q)\in\{\underline{L},\dots,\overline{L}\}:l\neq q\}$.
\end{theorem}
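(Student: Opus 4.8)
The strategy is to decompose the mean square error of $\pi_k^{\overline{L},ML}(\varphi)-\pi_k^{\overline{L}}(\varphi)$ into a sum of variance terms (one per level $l\in\{\underline{L},\dots,\overline{L}\}$) plus cross-covariance terms over distinct pairs $(l,q)\in\mathsf{D}_{\underline{L},\overline{L}}$. Since the estimators at the different levels are generated independently (by construction of the AMLPF), the single-level contribution to the MSE is $\Exp[(\pi_k^{N_{\underline{L}},\underline{L}}(\varphi)-\pi_k^{\underline{L}}(\varphi))^2]$ plus $\sum_{l=\underline{L}+1}^{\overline{L}}\Exp[([\pi_k^l-\pi_k^{l-1}]^{N_l}(\varphi)-[\pi_k^l-\pi_k^{l-1}](\varphi))^2]$, while the cross terms are products of expectations of the individual errors. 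The first display term $1/N_{\underline{L}}$ is the standard particle filter $\mathbb{L}_2$ bound at level $\underline{L}$; this is classical (e.g.\ via the Del Moral--type $\mathbb{L}_p$ inequalities for particle filters) and I would cite it. The terms indexed by $l$ in the first sum are the variances of the coupled difference estimator, and the cross-pair terms come from bounding each single-level bias-type quantity $|\Exp[[\pi_k^l-\pi_k^{l-1}]^{N_l}(\varphi)] - [\pi_k^l-\pi_k^{l-1}](\varphi)|$ and taking products.

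The core of the argument is an inductive-in-$k$ analysis of the coupled particle filter of Algorithm~\ref{alg:coup_pf}, tracking three coupled systems $(X_k^{1:N_l,l},\bar X_k^{1:N_l,l-1},X_k^{1:N_l,l,a})$. I would set up the usual particle-filter telescoping identity, writing the error $[\pi_k^l-\pi_k^{l-1}]^{N_l}(\varphi)-[\pi_k^l-\pi_k^{l-1}](\varphi)$ as a sum over time $j\le k$ of martingale-type increments associated with the sampling and the maximal-coupling resampling step, propagated forward by the (Feynman--Kac) update operators. The key probabilistic inputs are: (i) the strong-error estimates for the antithetic truncated Milstein scheme of \cite{ml_anti}, which give that the pair $(X_1^l,\bar X_1^{l-1})$ and the antithetic average $\tfrac12(X_1^l+X_1^{l,a})$ are close to $\bar X_1^{l-1}$ in an $\mathbb{L}_2$ sense of order $\Delta_l$ (rather than $\Delta_l^{1/2}$ for Euler), together with an $\mathbb{L}_1$/second-moment control of the difference $X_1^l-X_1^{l,a}$ of order $\Delta_l^{1/2}$ — these are exactly what produces the $\Delta_l$ versus $\Delta_l^{1/2}$ and the $\Delta_l^{1/2-\varepsilon}$ factors (the $\varepsilon$ coming from a moment/Markov-inequality step or from a truncation in the truncated Milstein scheme); and (ii) a bound on the probability that the maximal-coupling resampling step ``fails'' (i.e.\ does not pick the common index), which is controlled by $\sum_i|W_k^{i,l}-\bar W_k^{i,l-1}|$ and its analogue with the antithetic weights, and hence again by the $\mathbb{L}_2$ distance between the coupled particles through Lipschitz continuity of $g$ (in assumption (A\ref{ass:g})).

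Concretely I would: first establish a one-step lemma showing that if at time $j$ the three empirical measures are $\mathbb{L}_2$-close with the stated rates, then after the coupled sampling step (Algorithm~\ref{alg:milstein}) and the maximal-coupling resampling step (Algorithm~\ref{alg:max_coup}) they remain close with the same rates, absorbing the extra $1/N_l$ and $1/N_l^2$ factors that come from the Monte Carlo fluctuations of the resampling and from the probability-of-decoupling term; then iterate this over $j=1,\dots,k$, using that $k$ is fixed so the constants compound only finitely many times into the overall $C$. The variance of $\tfrac12\{\pi_k^{N_l,l}(\varphi)+\pi_k^{N_l,l,a}(\varphi)\}-\bar\pi_k^{N_l,l-1}(\varphi)$ then naturally splits into a ``antithetic cancellation'' piece of size $\Delta_l/N_l$, a higher-order piece $\Delta_l^{1/2}/N_l^2$ from the quadratic interaction of the decoupling events, and the $\Delta_l^{1/2-\varepsilon}/N_l^2$ piece from the truncation; for the cross-pair terms one applies Cauchy--Schwarz (or directly multiplies the two single-level error bounds) to get the product form displayed, with the inner square-root structure reflecting a weak-error/bias bound of order $\Delta_l$ combined with a Monte-Carlo term of order $\Delta_l^{1/2-\varepsilon}/N_l$.

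\textbf{Main obstacle.} The hard part is the propagation-of-chaos style bookkeeping for the \emph{three}-way coupled system through the non-linear resampling of Algorithm~\ref{alg:max_coup}: unlike the two-system case in \cite{mlpf}, one must simultaneously control the $(l,l-1)$, $(l,l,a)$ and $(l-1,l,a)$ discrepancies and show that the maximal-coupling resampling does not destroy the antithetic $O(\Delta_l)$ cancellation between the $l$ and $l,a$ systems — i.e.\ that the resampling preserves, up to $O(1/N_l)$ Monte Carlo error, the identity that the average of the two fine systems tracks the coarse one at the Milstein (not Euler) rate. Making this quantitative requires carefully combining the strong-error bounds of \cite{ml_anti} with sharp estimates on the total-variation/decoupling probabilities at each resampling step, and it is where the delicate $\Delta_l^{1/2-\varepsilon}/N_l$ and $\Delta_l^{1/2}/N_l^2$ terms must be shown not to contaminate the leading $\Delta_l/N_l$ behaviour.
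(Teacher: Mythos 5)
Your plan follows the paper's own route: the paper likewise separates the level-$\underline{L}$ particle-filter error (via the $C_2$-inequality rather than your direct covariance expansion -- a cosmetic difference that only requires a trivial Young-inequality step to absorb the base-level cross terms), bounds it with standard particle-filter $\mathbb{L}_2$ results, and then multiplies out the multilevel sum so that, by independence across levels, the diagonal terms are handled by an $\mathbb{L}_p$ bound and the off-diagonal terms by products of bias bounds for the coupled antithetic difference estimator (Theorems \ref{theo:var} and \ref{theo:bias}). Your sketch of how those two ingredients are established -- induction in time for the three-way coupled system, the strong-error rates of \cite{ml_anti}, and control of the maximal-coupling decoupling probability through the weight differences and the Lipschitz property of $g$ -- is exactly the strategy carried out in the paper's appendix.
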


\begin{proof}
This follows by using the $C_2-$inequality to separate the terms $\pi_k^{N_{\underline{L}},\underline{L}}(\varphi) -\pi_k^{\underline{L}}(\varphi)$ and $\sum_{l=\underline{L}+1}^{\overline{L}}[\pi_{k}^l-\pi_{k}^{l-1}]^{N_l}(\varphi)-\sum_{l=\underline{L}+1}^{\overline{L}}[\pi_{k}^l-\pi_{k}^{l-1}](\varphi)$. Then one uses standard results for particle
filters (see e.g.~\cite[Lemma A.3]{beskos}) for the $\pi_k^{N_{\underline{L}},\underline{L}}(\varphi) -\pi_k^{\underline{L}}(\varphi)$ and then one multiplies out the brackets and uses Theorems \ref{theo:var} and \ref{theo:bias} for the other term.
\end{proof}

The implication of this result is as follows. We know that the bias $|\pi_k^{\underline{L}}(\varphi)-\pi_k(\varphi)|$ is $\mathcal{O}(\Delta_{\overline{L}})$. This can be proved in a similar manner to \cite[Lemma D.2]{mlpf} along with using the fact that the truncated Milstein scheme is a first order (weak) method. Therefore, for $\epsilon>0$ given, one can choose $\overline{L}$ such that $\Delta_{\overline{L}}=\mathcal{O}(\epsilon)$ (so that the square bias is $\mathcal{O}(\epsilon^2)$. Then to choose $N_{\underline{L}},\dots,N_{\overline{L}}$, one can set
$N_{\underline{L}}=\mathcal{O}(\epsilon^{-2})$ and just as in e.g.~\cite{giles} that
$N_l=\mathcal{O}(\epsilon^{-2}\Delta_l(\overline{L}-\underline{L}+1))$, $l\in\{\underline{L}+1,\dots,\overline{L}\}$. If one does this, one can verify that the upper-bound in Theorem \ref{theo:main_theo} is $\mathcal{O}(\epsilon^2)$; so the MSE is also $\mathcal{O}(\epsilon^2)$. The cost to run the algorithm per-time step is
$\mathcal{O}(\sum_{l=\underline{L}}^{\overline{L}}N_l\Delta_{l}^{-1}) = \mathcal{O}(\epsilon^{-2}\log(\epsilon)^2)$.
In the case that \eqref{eq:diff_proc} has a non-constant diffusion coefficient with $d> 2$, the approach in \cite{mlpf}, based upon using Euler discretizations, 
would obtain a MSE of $\mathcal{O}(\epsilon^2)$ at a cost of $\mathcal{O}(\epsilon^{-2.5})$; a significant increase.

\section{Numerical Results}\label{sec:numerics}

%
\subsection{Models}

We consider three different models for our numerical experiments.

\subsubsection{Model 1: Geometric Brownian motion (GBM) process} 
 Our first model we use is :
$$
dX_t  = \mu X_t dt + \sigma X_t dW_t, \quad X_0=x_0.
$$
We set $Y_n|X_{n}=x \sim \mathcal{N}(\log(x),\tau^2)$ where $\tau^2=0.02$
and $\mathcal{N}(x,\tau^2)$ is the Gaussian distribution with mean $x$ and variance $\tau^2$. We choose $x_0 = 1$, $\mu = 0.02$, and $\sigma = 0.2$.

\subsubsection{Model 2: Clark-Cameron SDE}
 Our second model we consider in this paper is the Clark-Cameron SDE model (e.g.~\cite{ml_anti}) with initial conditions $x_0=(0,0)^{\top}$
\begin{eqnarray*}
dX_{1,t}  & = &  dW_{1,t}    \\ 
dX_{2,t}  & = & X_{1,t}  dW_{2,t}
\end{eqnarray*}
where $X_{j,t}$ denotes the $j^{th}$ dimension of $X_t$, $j\in\{1,\dots,d\}$.
In addition,  $Y_n|X_n=x\sim \mathcal{N}( \tfrac{X_{1} + X_{2}}{2},\tau^2)$ where $\tau^2=0.1$

\subsubsection{Model 3: Multi-dimensional SDEs with a nonlinear diffusion term (NLMs)} 

For our last model we use  the following multi-dimensional SDEs, with $x_0=(0,0)^{\top}$
 \begin{eqnarray*}
dX_{1,t}  & = & \theta_{1} (\mu_{1} - X_{1,t}) dt + \frac{\sigma_1}{\sqrt{1 + X_{1,t}^{2} }}  dW_{1,t}    \\ 
dX_{2,t}  & =  & \theta_{2} (\mu_{2} - X_{1,t}) dt +   \frac{\sigma_2}{\sqrt{1 + X_{1,t}^{2}}}   dW_{2,t}.
\end{eqnarray*}
We set 
$Y_n|X_n=x\sim \mathcal{L}\left(\tfrac{X_{1} + X_{2}}{2},s\right)$ where $\mathcal{L}(m,s)$ is the Laplace distribution with location $m$ and scale $s$. The values of the parameters that we choose are $(\theta_{1}, \theta_{2}) = (1,1)$, $(\mu_{1}, \mu_{2}) = (0,0)$, $(\sigma_{1}, \sigma_{2}) = (1,1)$ and $s= \sqrt{0.1}$.

\subsection{Simulation Settings}

We will consider a comparison of the PF, MLPF (as in \cite{mlpf}) and AMLPF.
For our numerical experiments, we consider multilevel estimators at levels $l = \{3,4,5,6,7\}$. Resampling is performed adaptively. For the PFs, resampling is done when the effective sample size (ESS) is less than $1/2$ of the particle numbers. For the coupled filters, we use the ESS of the coarse filter as the measurement of discrepancy and resampling occurs at the same threshold as the PF. Each simulation is repeated $100$ times.

\subsection{Simulation Results}

We now present our numerical simulations to show the benefits of the AMLPF on the three models under consideration.
The figures we present are the rate of the cost and the mean-squared error for the estimator for the filter and normalizing constant which are Figures \ref{fig:MSEvsCost} and \ref{fig:MSEvsCost-NC}. The figures show that as we increase the levels from $l\in\{3,\dots,7\}$, the difference in the cost between the methods increases. These figures show the advantage and accuracy of using AMLPF.  Table \ref{tab:tab1} presents the estimated rates of MSE with respect to cost for the normalizing constant and filter. The results agree with the theory \cite{mlpf} and the results of this article, which predicts a rate of $-1.5$ for the particle filter,  $-1.25$ for the multilevel particle filter, and $-1$ for the antithetic multilevel particle filter.

\begin{figure}[h!]
\centering
\subfigure{\includegraphics[width=10cm,height=4.5cm]{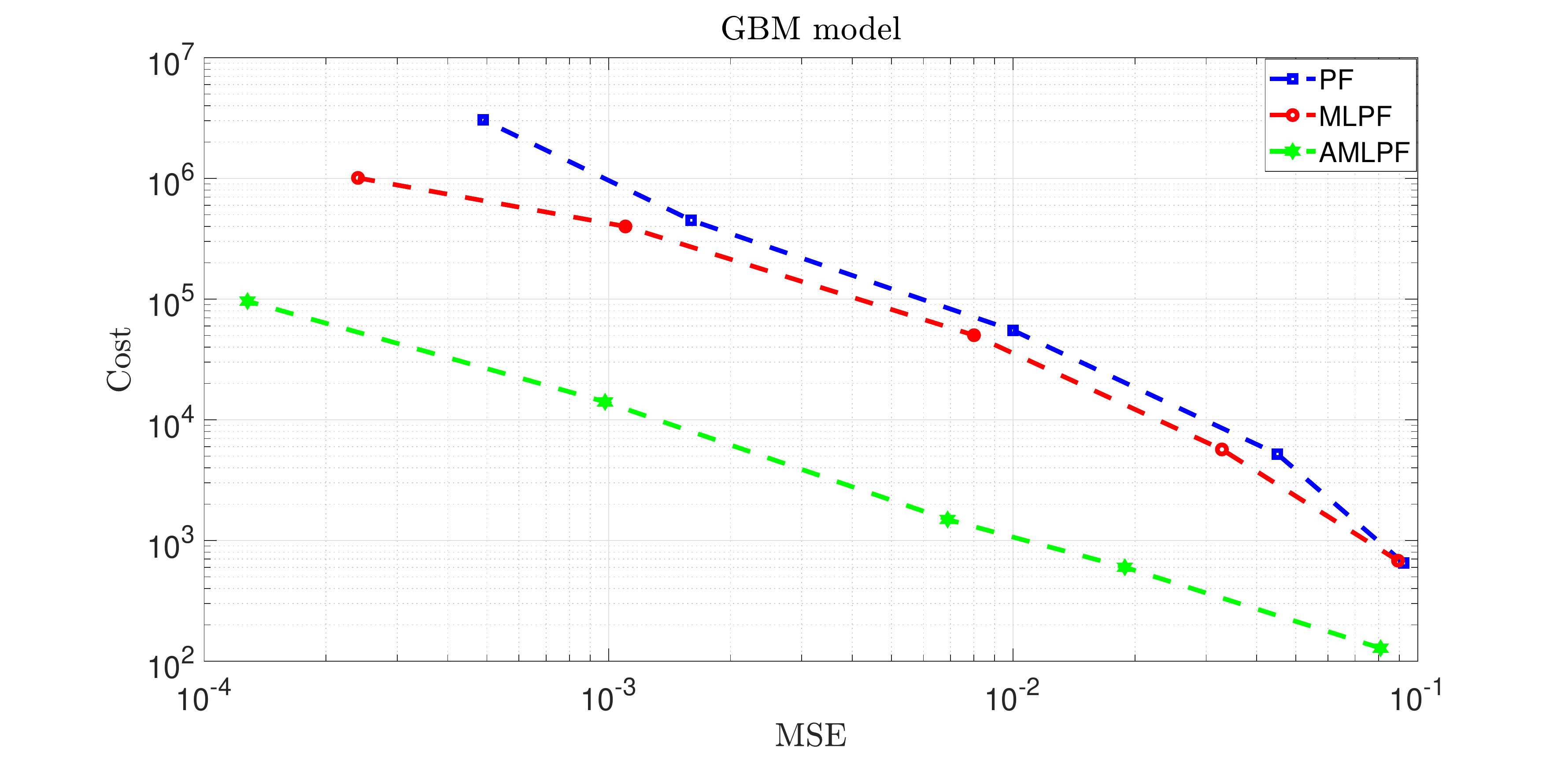}}
\subfigure{\includegraphics[width=10cm,height=4.5cm]{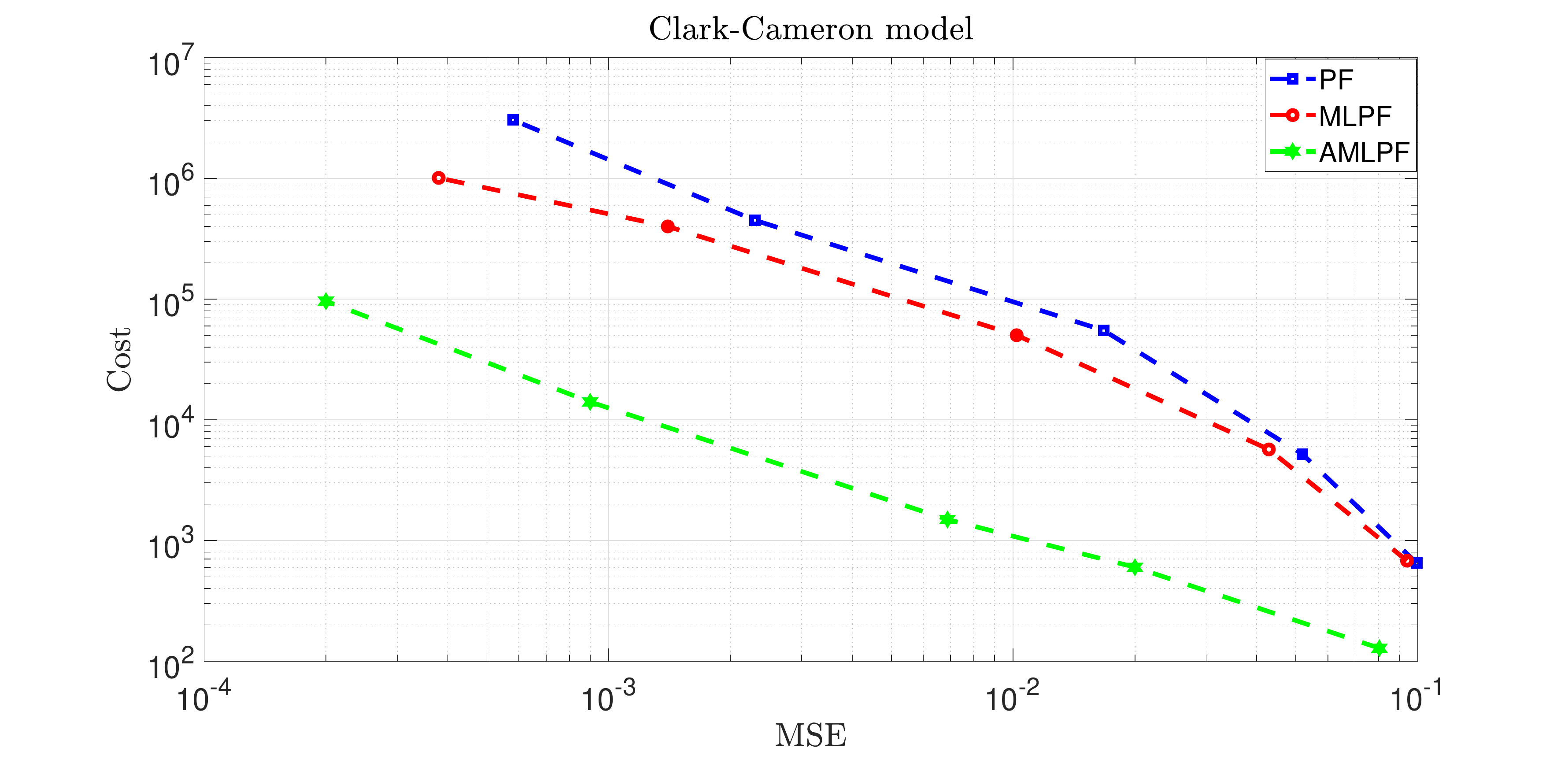}}
\subfigure{\includegraphics[width=10cm,height=4.5cm]{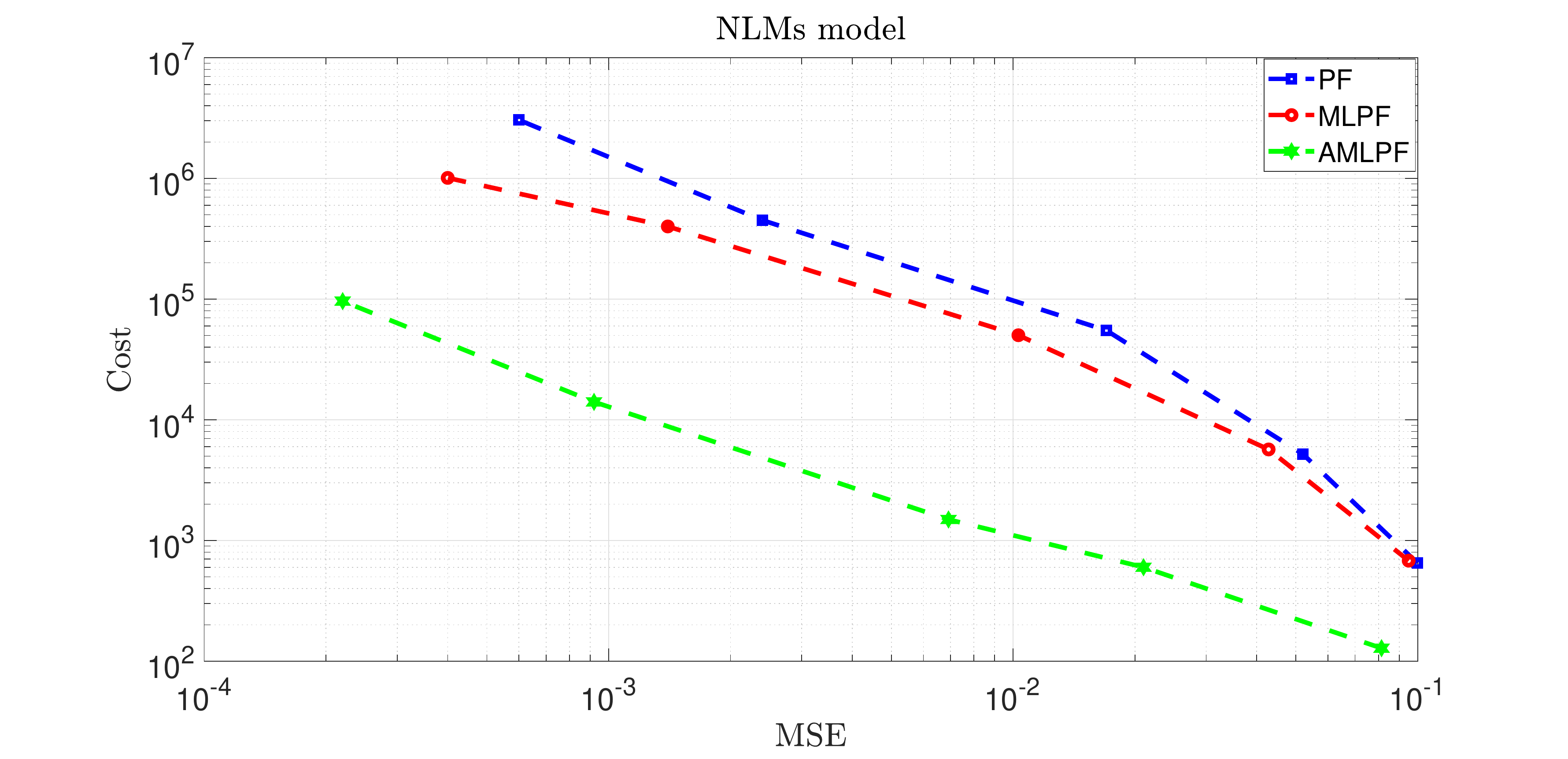}}
 \caption{Cost rates as a function of the mean squared error. The results are for the filter.}
    \label{fig:MSEvsCost}
\end{figure}

\begin{figure}[h!]
\centering
\subfigure{\includegraphics[width=10cm,height=4.5cm]{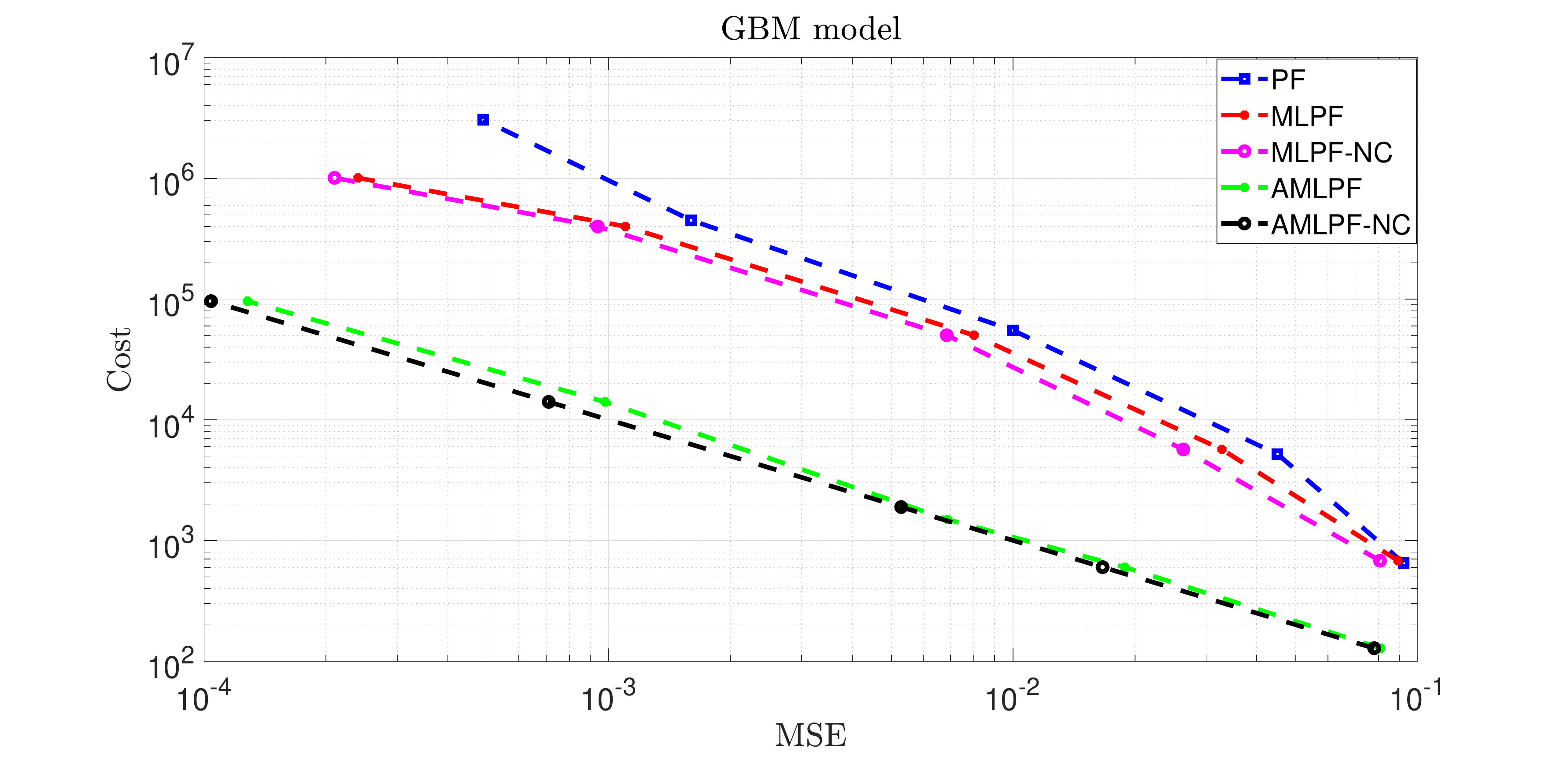}}
\subfigure{\includegraphics[width=10cm,height=4.5cm]{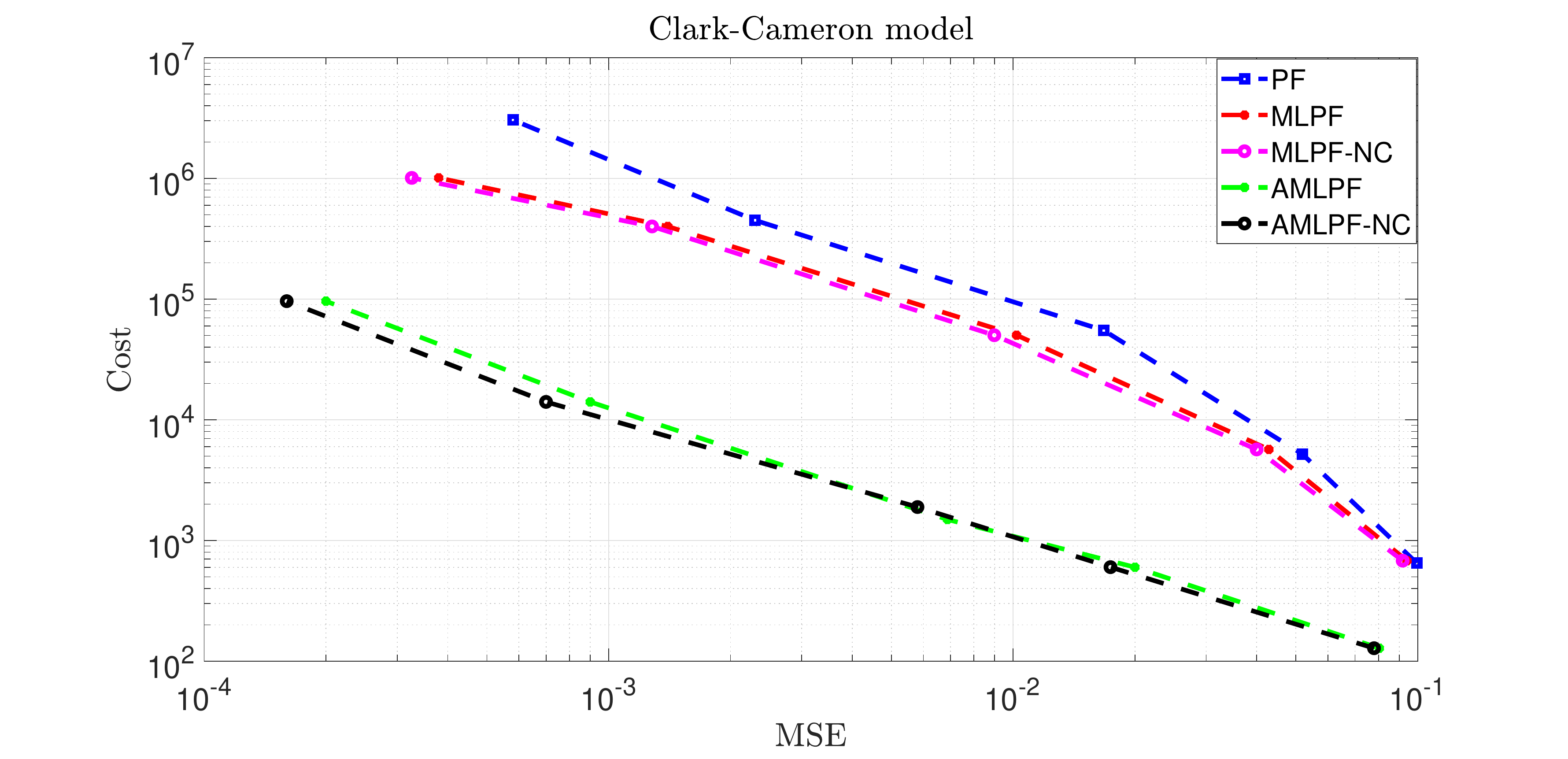}}
\subfigure{\includegraphics[width=10cm,height=4.5cm]{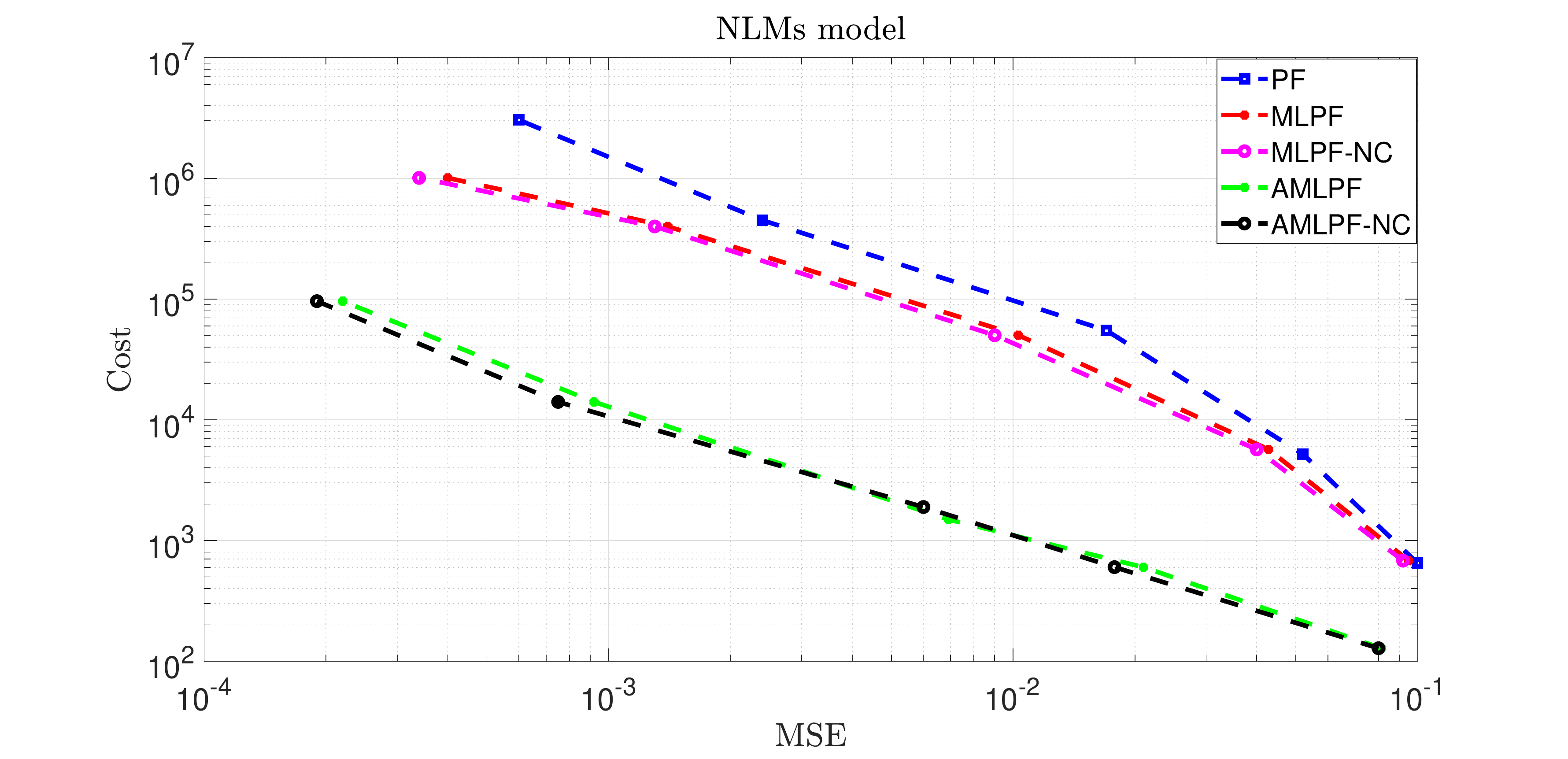}}
 \caption{Cost rates as a function of the mean squared error of our algorithms with results for the normalizing constants.}
    \label{fig:MSEvsCost-NC}
\end{figure}

\begin{table}[h!]
\begin{center}
\begin{tabular}{ c c c c c c } 
\hline \hline
Model  & PF (filter) & MLPF (NC) & MLPF (filter) & AMLPF (NC) & AMLPF (filter)  \\
\hline
\hline
\textbf{GBM} & -1.53 &-1.21  & -1.23 & -1.04  & -1.02 \\ 
 
\textbf{Clark-Cameron} & -1.55 &-1.28  & -1.26 & -1.07 & -1.05 \\
 
\textbf{NLMs} & -1.56 &-1.29  & -1.27 & -1.08 & -1.06 \\
 \hline
\end{tabular}
\caption{Estimated rates of MSE with respect to Cost. NC stands for normalizing constant.}
\label{tab:tab1}
\end{center}
\end{table}

\subsubsection*{Acknowledgements}

All authors were supported by KAUST baseline funding.

\appendix

\section{Introduction and Some Notations}\label{app:intro}

The purpose of this Appendix is to provide the necessary technical results to prove Theorem \ref{theo:main_theo}.
To understand the arguments fully, we advise that the proofs are read in order. It is possible to skip Section \ref{app:mil}
and simply refer to the proofs, but some notations or nuances could be missed. 
The structure of this appendix is to first consider some properties of the antithetic truncated Milstein scheme of \cite{ml_anti}. We present several new results which are needed directly for our subsequent proofs. We then consider the coupled particle filter as in Algorithm \ref{alg:coup_pf} in Section \ref{app:cpf}. This Section is split into three different subsections. The first of which, Section \ref{app:rates}, considers the combination of the results in Section \ref{app:mil} to the application of a coupled particle filter. These latter results then feed into the final two, which consider the convergence of $[\pi_{k}^l-\pi_{k}^{l-1}]^{N_l}(\varphi)$ in $\mathbb{L}_p$ (Section \ref{app:part_lp}) and the associated bias (Section \ref{app:part_bias}).
This the culmination of the work in this Appendix and is summarized in Theorems \ref{theo:var} and \ref{theo:bias}.
To prove our results we use two major assumptions (A\ref{ass:diff1}) and (A\ref{ass:g}). The former can be found at the start of Section \ref{app:mil} and the latter at the start of Section \ref{app:cpf}.

\subsection{Some Notations}

Let $(\mathsf{V},\mathcal{V})$ be a measurable space.
For $\varphi:\mathsf{V}\rightarrow\mathbb{R}$ we write $\mathcal{B}_b(\mathsf{V})$ as the collection of bounded measurable functions. 
Let $\varphi:\mathbb{R}^d\rightarrow\mathbb{R}$, $\textrm{Lip}(\mathbb{R}^{d})$ denotes the collection of real-valued functions that are Lipschitz w.r.t.~$\|\cdot\|$ ($\|\cdot\|$ denotes the $\mathbb{L}_2-$norm of a vector $x\in\mathbb{R}^d$). That is, $\varphi\in\textrm{Lip}(\mathbb{R}^{d})$ if there exists a $C<+\infty$ such that for any $(x,y)\in\mathbb{R}^{2d}$
$$
|\varphi(x)-\varphi(y)| \leq C\|x-y\|.
$$
For $\varphi\in\mathcal{B}_b(\mathsf{V})$, we write the supremum norm $\|\varphi\|_{\infty}=\sup_{x\in\mathsf{V}}|\varphi(x)|$.
For a measure $\mu$ on $(\mathsf{V},\mathcal{V})$
and a function $\varphi\in\mathcal{B}_b(\mathsf{V})$, the notation $\mu(\varphi)=\int_{\mathsf{V}}\varphi(x)\mu(dx)$ is used. For $A\in\mathcal{V}$, the indicator function is written as $\mathbb{I}_A(x)$.
If $K:\mathsf{V}\times\mathcal{V}\rightarrow[0,\infty)$ is a non-negative operator and $\mu$ is a measure, we use the notations
$
\mu K(dy) = \int_{\mathsf{V}}\mu(dx) K(x,dy)
$
and for $\varphi\in\mathcal{B}_b(\mathsf{V})$, 
$
K(\varphi)(x) = \int_{\mathsf{V}} \varphi(y) K(x,dy).
$
We denote (throughout) $C$ as a generic finite constant whose value may change upon each appearance and whose dependencies (on model and simulation parameters) are clear from the statements associated to them.

\section{Proofs for Antithetic Truncated Milstein Scheme}\label{app:mil}

The proofs of this section focus on the antithetic truncated Milstein discretization over a unit time (i.e.~as in Algorithm \ref{alg:milstein}). The case we consider is almost identical to that in \cite{ml_anti} except that we impose that our initial points $(x_0^l,x_0^{l-1},x_0^{l,a})$ need not be equal. This constraint is important when considering our subsequent proofs for the coupled (and multilevel) particle filter. Our objective is to prove a similar result to \cite[Theorem 4.10]{ml_anti} and to that end we make the following assumption which is stronger than  \cite[Assumption 4.1]{ml_anti}. The stronger assumption is related to the boundedness of the drift and diffusion coefficients of \eqref{eq:diff_proc}.
The reason that we require this is because it greatly simplifies our (subsequent) proofs if the constants in the below results do not depend on the initial points $(x_0^l,x_0^{l-1},x_0^{l,a})$; this would be the case otherwise. We write $\mathsf{X}_2=\mathbb{R}^{d\times d}$. $\mathbb{E}$ denotes the expectation w.r.t.~the law associated to Algorithm \ref{alg:milstein}.
The assumption is as follows.
\begin{hypA}\label{ass:diff1}
\begin{itemize}
\item{For each $(i,j)\in\{1,\dots,d\}$, $\alpha_i\in\mathcal{B}_b(\mathsf{X})$, $\beta_{ij}\in\mathcal{B}_b(\mathsf{X})$.}
\item{$\alpha\in\mathcal{C}^2(\mathsf{X},\mathsf{X})$, $\beta\in\mathcal{C}^2(\mathsf{X},\mathsf{X}_2)$.} 
\item{$\beta(x)\beta(x)^{\top}$ is uniformly positive definite.}
\item{There exists a $C<+\infty$ such that for any $(x,i,j,k,m)\in\mathsf{X}\times\{1,\dots,d\}^{4}$:
$$
\max\left\{\left|\frac{\partial \alpha_i}{\partial x_m}(x)\right|,
\left|\frac{\partial \beta_{ij}}{\partial x_m}(x)\right|,
\left|\frac{\partial h_{ijk}}{\partial x_m}(x)\right|,
\left|\frac{\partial^2 \alpha_i}{\partial x_{k}\partial x_m}(x)\right|,
\left|\frac{\partial^2 \beta_{ij}}{\partial x_{k}\partial x_m}(x)\right|
\right\} \leq C.
$$}
\end{itemize}
\end{hypA}

Our final result of this Section, Proposition \ref{prop:diff1}, is our adaptation of \cite[Theorem 4.10]{ml_anti} and is proved using simple modifications of \cite[Lemmata 4.6,4.7,4.9, Corollary 4.8]{ml_anti}. The proofs of \cite[Lemmata 4.7, Corollary 4.8]{ml_anti} need not be modified, so we proceed to prove analogues of \cite[Lemmata 4.6,4.9]{ml_anti}. 

\begin{lem}\label{lem:diff1}
Assume (A\ref{ass:diff1}). Then for any $p\in[1,\infty)$ there exists a $C<+\infty$ such that for any $l\in\mathbb{N}$
$$
\mathbb{E}\left[\max_{k\in\{0,2,\dots,\Delta_l^{-1}\}}\|X_{k\Delta_l}^{l}-X_{k\Delta_l}^{l,a}\|^p\right] \leq C\left(\Delta_l^{\tfrac{p}{2}}+\|x_0^{l}-x_0^{l-1}\|^p+\|x_0^{l,a}-x_0^{l-1}\|^p\right).
$$
\end{lem}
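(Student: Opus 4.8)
The plan is to follow the structure of the proof of \cite[Lemma 4.6]{ml_anti}, adapting it to the case of unequal starting points. First I would set up the difference process $E_k := X_{k\Delta_l}^l - X_{k\Delta_l}^{l,a}$ along the fine grid, noting that both the level-$l$ and the antithetic level-$l$ trajectories are driven by the \emph{same} increments $Z_1,\dots,Z_{\Delta_l^{-1}}$ but in a permuted order (via $\rho_k$), so that over a coarse-scale pair of steps $\{2m,2m+1\}$ the two schemes see the increments $(Z_{2m+1},Z_{2m+2})$ and $(Z_{2m+2},Z_{2m+1})$ respectively. The key algebraic observation, exactly as in \cite{ml_anti}, is that over such a pair of steps the \emph{leading} contributions of the Brownian increments cancel between the two trajectories (the sum $Z_{2m+1}+Z_{2m+2}$ is invariant under the swap), so that the one-coarse-step increment of $E$ is governed only by (i) differences of the drift/diffusion coefficients evaluated at $X^l$ versus $X^{l,a}$, which are Lipschitz in $E$ by (A\ref{ass:diff1}), and (ii) genuinely ``antithetic'' remainder terms coming from the $H_{\Delta_l}$ correction and from the mismatch of intermediate points, which are of size $\mathcal{O}(\Delta_l)$ in an $\mathbb{L}_p$ sense per coarse step.

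Next I would write a one-coarse-step recursion of the schematic form
$$
E_{2(m+1)} = E_{2m} + A_m E_{2m} + R_m + M_m,
$$
where $A_m$ collects the Lipschitz drift/diffusion differences (so $\|A_m\| \le C\Delta_l$ deterministically, using boundedness of the coefficient derivatives in (A\ref{ass:diff1})), $R_m$ is a predictable remainder with $\mathbb{E}[\|R_m\|^p \mid \mathcal{F}_{2m}]^{1/p} \le C\Delta_l^{3/2}$ or better, and $M_m$ is a martingale-increment-type term with conditional $\mathbb{L}_p$ norm $\mathcal{O}(\Delta_l)$ — this is precisely the structure exploited in \cite[Lemma 4.6]{ml_anti}. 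Taking the running maximum, applying the $C_p$ (or $C_2$) inequality, the Burkholder--Davis--Gundy inequality for the martingale part, Minkowski for the remainder part, and then a discrete Gr\"onwall inequality, yields
$$
\mathbb{E}\Big[\max_{k}\|E_k\|^p\Big] \le C\Big(\|E_0\|^p + \Delta_l^{p/2}\Big).
$$
The only genuine new ingredient relative to \cite{ml_anti} is the initial term $\|E_0\|^p = \|x_0^l - x_0^{l,a}\|^p$, which I then bound by $C(\|x_0^l - x_0^{l-1}\|^p + \|x_0^{l,a} - x_0^{l-1}\|^p)$ via the triangle inequality and convexity of $t\mapsto t^p$, giving the stated form. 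Here the strengthened assumption (A\ref{ass:diff1}) — boundedness of $\alpha,\beta$ and their derivatives — is what keeps all the Gr\"onwall constants independent of the three starting points, as emphasised in the surrounding text.

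The main obstacle I anticipate is not the Gr\"onwall machinery but the careful bookkeeping of the antithetic cancellation when the two trajectories are started at \emph{different} points: in \cite{ml_anti} many remainder terms vanish or simplify because $X^l$ and $X^{l,a}$ coincide at time $0$ and stay ``$\mathcal{O}(\Delta_l^{1/2})$-close'', whereas here they can differ by an $\mathcal{O}(1)$ amount $\|E_0\|$. One must therefore re-examine each remainder term in the one-step expansion and check that every coefficient difference is controlled either by $\|E_{2m}\|$ (feeding the Gr\"onwall term) or by an explicit power of $\Delta_l$ (feeding the additive $\Delta_l^{p/2}$), with no term that is simultaneously $\mathcal{O}(1)$ in $E$ and non-contractive. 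Once one verifies that the $E$-dependence is always through a factor of size $\mathcal{O}(\Delta_l)$ (from Lipschitzness times a single time-step), the argument closes exactly as in \cite[Lemma 4.6]{ml_anti}, and I would simply cite that lemma for the parts of the computation that are unchanged.
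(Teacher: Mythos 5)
Your proposal is essentially correct, but it takes a genuinely different route from the paper. The paper does not set up a discrete Gr\"onwall/BDG recursion for the difference $X^l-X^{l,a}$ at all: it first splits $\|X^l_{k\Delta_l}-X^{l,a}_{k\Delta_l}\|^p$ via the $C_p$-inequality through the coarse path $X^{l-1}$, then inserts the \emph{exact} diffusion started from each of the three initial points (denoted $X^{x_0^l}$, $X^{x_0^{l-1}}$, $X^{x_0^{l,a}}$, all driven by the same Brownian motion), and concludes by citing the strong convergence of the truncated Milstein scheme \cite[Lemma 4.2]{ml_anti} for the three scheme-versus-exact terms and the $\mathbb{L}_p$-continuity of the exact flow in its initial condition (Gr\"onwall plus \cite[Corollary V.11.7]{rogers}) for the exact-versus-exact terms; this is why the bound is naturally expressed through $\|x_0^l-x_0^{l-1}\|$ and $\|x_0^{l,a}-x_0^{l-1}\|$. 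Your argument instead re-derives a discrete stability estimate for $E_k=X^l_{k\Delta_l}-X^{l,a}_{k\Delta_l}$ directly, in the spirit of \cite[Lemma 4.6]{ml_anti}, obtaining the (slightly sharper) intermediate bound $C(\|x_0^l-x_0^{l,a}\|^p+\Delta_l^{p/2})$ and then passing to the stated form by the triangle inequality; your observation that the swap-invariance of $Z_{2m+1}+Z_{2m+2}$ is what turns the leading noise contribution into a Lipschitz-in-$E$ martingale term (rather than an $\mathcal{O}(\Delta_l^{1/2})$ noise per step) is exactly the right reason this works, and, as you note, no genuine antithetic cancellation beyond this is needed at the $\Delta_l^{1/2}$ rate. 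The trade-off is that the paper's reduction is short and leans entirely on quoted results, while yours reproves a Lemma 4.6-type estimate with unequal starting points but avoids invoking the exact flow. One caveat in your bookkeeping: the claim that $\|A_m\|\leq C\Delta_l$ \emph{deterministically} cannot include the diffusion-coefficient differences, since $[\beta(X^l_{2m\Delta_l})-\beta(X^{l,a}_{2m\Delta_l})](Z_{2m+1}+Z_{2m+2})$ is only $\mathcal{O}(\|E_{2m}\|\Delta_l^{1/2})$ in conditional $\mathbb{L}_p$ and is not deterministically small; these terms must be carried in the martingale part (quadratic variation bounded by $C\sum_m\|E_{2m}\|^2\Delta_l$) and handled by BDG before the discrete Gr\"onwall step, which is the standard repair and does not affect the final rate.
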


\begin{proof}
We have, using the $C_p-$inequality, that
$$
\mathbb{E}\left[\max_{k\in\{0,2,\dots,\Delta_l^{-1}\}}\|X_{k\Delta_l}^{l}-X_{k\Delta_l}^{l,a}\|^p\right] \leq 
$$
$$
C\left(\mathbb{E}\left[\max_{k\in\{0,2,\dots,\Delta_l^{-1}\}}\|X_{k\Delta_l}^{l}-X_{k\Delta_l}^{l-1}\|^p\right]+\mathbb{E}\left[\max_{k\in\{0,2,\dots,\Delta_l^{-1}\}}\|X_{k\Delta_l}^{l,a}-X_{k\Delta_l}^{l-1}\|^p\right]\right).
$$
Denoting the solution of \eqref{eq:diff_proc} with initial point $x$ at time $t$ as $X_t^x$, we have
$$
\mathbb{E}\left[\max_{k\in\{0,2,\dots,\Delta_l^{-1}\}}\|X_{k\Delta_l}^{l}-X_{k\Delta_l}^{l,a}\|^p\right]  \leq 
$$
\begin{eqnarray*}
& & 
C\Bigg(
\mathbb{E}\left[\max_{k\in\{0,2,\dots,\Delta_l^{-1}\}}\|X_{k\Delta_l}^{l}-X_{k\Delta_l}^{x_0^l}\|^p\right] + \mathbb{E}\left[\max_{k\in\{0,2,\dots,\Delta_l^{-1}\}}\|X_{k\Delta_l}^{x_0^l}-X_{k\Delta_l}^{x_0^{l-1}}\|^p\right] + \\ & &
\mathbb{E}\left[\max_{k\in\{0,2,\dots,\Delta_l^{-1}\}}\|X_{k\Delta_l}^{l-1}-X_{k\Delta_l}^{x_0^{l-1}}\|^p\right] + 
\mathbb{E}\left[\max_{k\in\{0,2,\dots,\Delta_l^{-1}\}}\|X_{k\Delta_l}^{l,a}-X_{k\Delta_l}^{x_0^{l,a}}\|^p\right] + \\ & & \mathbb{E}\left[\max_{k\in\{0,2,\dots,\Delta_l^{-1}\}}\|X_{k\Delta_l}^{x_0^{l,a}}-X_{k\Delta_l}^{x_0^{l-1}}\|^p\right] + 
\mathbb{E}\left[\max_{k\in\{0,2,\dots,\Delta_l^{-1}\}}\|X_{k\Delta_l}^{l-1}-X_{k\Delta_l}^{x_0^{l-1}}\|^p\right]
\Bigg).
\end{eqnarray*}
The proof is then easily concluded via the strong convergence result \cite[Lemma 4.2]{ml_anti} and standard results for diffusion processes (e.g.~via Gronwall and \cite[Corollary V.11.7]{rogers}).
\end{proof}

\begin{rem}\label{rem:diff1}
Note that the proof also establishes that: for any $p\in[1,\infty)$ there exists a $C<+\infty$ such that for any $l\in\mathbb{N}$
$$
\mathbb{E}\left[\max_{k\in\{0,2,\dots,\Delta_l^{-1}\}}\|X_{k\Delta_l}^{l}-X_{k\Delta_l}^{l-1}\|^p\right] \leq C\left(\Delta_l^{\tfrac{p}{2}}+\|x_0^{l}-x_0^{l-1}\|^p\right)
$$
and 
$$
\mathbb{E}\left[\max_{k\in\{0,2,\dots,\Delta_l^{-1}\}}\|X_{k\Delta_l}^{l,a}-X_{k\Delta_l}^{l-1}\|^p\right] \leq C\left(\Delta_l^{\tfrac{p}{2}}+\|x_0^{l,a}-x_0^{l-1}\|^p\right).
$$
\end{rem}

To state our next result, our mirror result of  \cite[Lemma 4.9]{ml_anti}, we need to introduce a significant amount of notation directly from \cite{ml_anti}. Below we set $\overline{X}_{k\Delta_l}^l=\tfrac{1}{2}(X_{k\Delta_l}^l+X_{k\Delta_l}^{l,a})$, $k\in\{0,\dots,\Delta_l^{-1}\}$. For $(i,k)\in\{1,\dots,d\}\times\{2,4,\dots,\Delta_l^{-1}\}$ we define
\begin{eqnarray*}
R_{i,k}^{(1)} & = & \left\{\frac{1}{2}\left(\alpha_i(X_{k\Delta_l}^l)+\alpha_i(X_{k\Delta_l}^{l,a})\right)-\alpha_i(\overline{X}_{k\Delta_l}^l)\right\}\Delta_{l-1} \\
M_{i,k}^{(1)} & = & \sum_{j=1}^d\left\{\frac{1}{2}\left(\beta_{ij}(X_{k\Delta_l}^l)+\beta_{ij}(X_{k\Delta_l}^{l,a})\right)-
\beta_{ij}(\overline{X}_{k\Delta_l}^l)
\right\}[W_{j,(k+2)\Delta_l}-W_{j,k\Delta_l}] \\
M_{i,k}^{(2)} & = & \sum_{(j,m)\in\{1,\dots,d\}^2}
\left\{\frac{1}{2}\left(h_{ijm}(X_{k\Delta_l}^l)+h_{ijm}(X_{k\Delta_l}^{l,a})\right)-
h_{ijm}(\overline{X}_{k\Delta_l}^l)
\right\}\Big([W_{j,(k+2)\Delta_l}-W_{j,k\Delta_l}]\times \\ & &[W_{m,(k+2)\Delta_l}-W_{m,k\Delta_l}]-\Delta_{l-1}\Big)\\
M_{i,k}^{(3)} & = & \sum_{(j,m)\in\{1,\dots,d\}^2}\frac{1}{2}\left(h_{ijm}(X_{k\Delta_l}^l)-h_{ijm}(X_{k\Delta_l}^{l,a})\right)
\Big([W_{j,(k+1)\Delta_l}-W_{j,k\Delta_l}][W_{m,(k+2)\Delta_l}-W_{m,(k+1)\Delta_l}] - \\ & &
[W_{m,(k+1)\Delta_l}-W_{m,k\Delta_l}][W_{j,(k+2)\Delta_l}-W_{j,(k+1)\Delta_l}]
\Big)
\end{eqnarray*}
\begin{eqnarray*}
R_{i,k}^l & = & \sum_{j=1}^d\frac{\partial\alpha_i}{\partial x_j}(X_{k\Delta_l}^l)\Bigg(\alpha_{j}(X_{k\Delta_l}^l)\Delta_l
+ \sum_{(m,n)\in\{1,\dots,d\}^2} h_{imn}(X_{k\Delta_l}^l)\Big([W_{m,(k+1)\Delta_l}-W_{m,k\Delta_l}][W_{n,(k+1)\Delta_l}-\\ & & W_{n,k\Delta_l}]- \Delta_l\Big)\Bigg)\Delta_l + \frac{1}{2}\sum_{(j,m)\in\{1,\dots,d\}^2}\frac{\partial^2\alpha_i}{\partial x_j\partial x_m}(\xi_1^l)[X_{j,(k+1)\Delta_l}^l-X_{j,k\Delta_l}^l][X_{m,(k+1)\Delta_l}^l-X_{m,k\Delta_l}^l]\Delta_l \\
M_{i,k}^{(1,l)} & = & \sum_{(j,m)\in\{1,\dots,d\}^2}\frac{\partial\alpha_i}{\partial x_j}(X_{k\Delta_l}^l)\beta_{jm}(X_{k\Delta_l}^l)[W_{m,(k+1)\Delta_l}-W_{m,k\Delta_l}]\Delta_l
\end{eqnarray*}
\begin{eqnarray*}
M_{i,k}^{(2,l)} & = & \sum_{(j,m)\in\{1,\dots,d\}^2} \frac{\partial\beta_{ij}}{\partial x_m}(X_{k\Delta_l}^l)
\Bigg(\alpha_m(X_{k\Delta_l}^l)\Delta_l +
\sum_{(n,p)\in\{1,\dots,d\}^2}h_{mnp}(X_{k\Delta_l}^l)[
[W_{n,(k+1)\Delta_l}-W_{n,k\Delta_l}]\times \\ & & [W_{p,(k+1)\Delta_l}-W_{p,k\Delta_l}]
-\Delta_l]
\Bigg)[W_{j,(k+2)\Delta_l}-W_{j,(k+1)\Delta_l}]\\
M_{i,k}^{(3,l)} & = & \sum_{(j,m)\in\{1,\dots,d\}^2}\left\{h_{ijm}(X_{(k+1)\Delta_l}^l)-h_{ijm}(X_{k\Delta_l}^l)\right\}
\Big(
[W_{j,(k+2)\Delta_l}-W_{j,(k+1)\Delta_l}][W_{m,(k+2)\Delta_l}-\\ & & W_{m,(k+1)\Delta_l}]
-\Delta_l
\Big)
\end{eqnarray*}
where in $R_{i,k}^l$ $\xi_1^l$ is some point which lies on the line between $X_{k\Delta_l}^l$ and $X_{(k+1)\Delta_l}^l$.
In the case of $R_{i,k}^l$ and $M_{i,k}^{(j,l)}$, $j\in\{1,2,3\}$ one can substitute $X_{k\Delta_l}^l,\xi_1^{l},X_{(k+1)\Delta_l}^l$ for $X_{k\Delta_l}^{l,a},\xi_1^{l,a},X_{(k+1)\Delta_l}^{l,a}$ (where $\xi_1^{l,a}$ is some point which lies on the line between $X_{k\Delta_l}^{l,a}$ and $X_{(k+1)\Delta_l}^{l,a}$) and when we do so, we use the notations $R_{i,k}^{l,a}$ and $M_{i,k}^{(j,l,a)}$, $j\in\{1,2,3\}$. Finally we set
\begin{eqnarray*}
M_k & = & \sum_{j=1}^3 M_k^{(j)} + \frac{1}{2}\sum_{j=1}^3\{M_k^{(j,l)} + M_k^{(j,l,a)}\} \\
R_k & = & R_k^{(1)} + \frac{1}{2}\{R_{i,k}^l+R_{i,k}^{l,a}\}.
\end{eqnarray*}
We are now in the position to give our analogue of \cite[Lemma 4.9]{ml_anti}.

\begin{lem}\label{lem:diff2}
Assume (A\ref{ass:diff1}). Then:
\begin{itemize}
\item{For any $p\in[1,\infty)$ there exists a $C<+\infty$ such that for any $l\in\mathbb{N}$
$$
\max_{k\in\{2,4,\dots,\Delta_l\}}\mathbb{E}[|R_k|^p] \leq C\Delta_l^p\left(\Delta_l^p + \|x_0^{l}-x_0^{l-1}\|^{2p}+\|x_0^{l,a}-x_0^{l-1}\|^{2p}\right).
$$}
\item{For any $p\in[1,\infty)$ there exists a $C<+\infty$ such that for any $l\in\mathbb{N}$
$$
\max_{k\in\{2,4,\dots,\Delta_l\}}\mathbb{E}[|M_k|^p] \leq 
$$
$$
C\left(\Delta_l^{\tfrac{3p}{2}}+
\Delta_l^{\tfrac{p}{2}}\{\|x_0^{l}-x_0^{l-1}\|^{2p}+\|x_0^{l,a}-x_0^{l-1}\|^{2p}\} +
\Delta_l^{p}\{\|x_0^{l}-x_0^{l-1}\|^{p}+\|x_0^{l,a}-x_0^{l-1}\|^{p}\}
\right).
$$}
\end{itemize}
\end{lem}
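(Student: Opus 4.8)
The plan is to prove each of the two bounds by expanding the relevant quantity $R_k$ or $M_k$ into its constituent terms (the $R^{(1)}_{i,k}$, $R^{l}_{i,k}$, $R^{l,a}_{i,k}$ and the $M^{(j)}_{i,k}$, $M^{(j,l)}_{i,k}$, $M^{(j,l,a)}_{i,k}$ defined above), applying Minkowski/the $C_p$-inequality to reduce to controlling each term in $\mathbb{L}_p$, and then estimating each term individually using three ingredients: (i) the boundedness of the coefficients $\alpha$, $\beta$, $h_{ijk}$ and their derivatives under (A\ref{ass:diff1}); (ii) the Burkholder–Davis–Gundy inequality together with moment bounds for Gaussian (Brownian) increments over intervals of length $\Delta_l$; and (iii) the displacement bounds of Lemma \ref{lem:diff1} and Remark \ref{rem:diff1}, which control $\|X^l_{k\Delta_l}-X^{l,a}_{k\Delta_l}\|$, $\|X^l_{k\Delta_l}-X^{l-1}_{k\Delta_l}\|$ and $\|X^{l,a}_{k\Delta_l}-X^{l-1}_{k\Delta_l}\|$ in any $\mathbb{L}_q$. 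This mirrors the structure of the proof of \cite[Lemma 4.9]{ml_anti}; the only genuinely new feature is that the right-hand sides now carry the extra initial-displacement terms $\|x_0^l-x_0^{l-1}\|$ and $\|x_0^{l,a}-x_0^{l-1}\|$, which flow in exactly through Lemma \ref{lem:diff1}/Remark \ref{rem:diff1}.

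For the bound on $R_k$: the term $R^{(1)}_{i,k}$ carries a factor $\Delta_{l-1}$ out front, and a second-order Taylor expansion of $\alpha_i$ around $\overline{X}^l_{k\Delta_l}$ produces a factor $\|X^l_{k\Delta_l}-X^{l,a}_{k\Delta_l}\|^2$ (the first-order terms cancel by the midpoint symmetry); taking $\mathbb{L}_p$ and applying Lemma \ref{lem:diff1} with exponent $2p$ gives $\Delta_l^p(\Delta_l^p+\|x_0^l-x_0^{l-1}\|^{2p}+\|x_0^{l,a}-x_0^{l-1}\|^{2p})$. The terms $R^l_{i,k}$ and $R^{l,a}_{i,k}$ are local one-step remainders of size $\Delta_l^2$ (schematically $\Delta_l\times(\text{one-step increment}\times\Delta_l)$ plus the Taylor remainder $(\text{increment})^2\Delta_l$), and after summing over $k\in\{2,4,\dots,\Delta_l^{-1}\}$ — there are $O(\Delta_l^{-1})$ summands — one is left with the claimed $O(\Delta_l)$ overall order; the initial-displacement dependence is absorbed into the generic constant via the moment bounds on $X^l$, $X^{l,a}$. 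One must be a little careful that the $\max_k$ is inside the expectation only in the sense of the per-$k$ bound stated, so it suffices to bound each $\mathbb{E}[|R_k|^p]$ uniformly in $k$.

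For the bound on $M_k$: here one separates the martingale-difference structure. The terms $M^{(1)}_{i,k}$, $M^{(2)}_{i,k}$ each carry a coefficient difference that, by Taylor expansion around the midpoint, is $O(\|X^l_{k\Delta_l}-X^{l,a}_{k\Delta_l}\|)$, multiplied by a Brownian increment (resp. a centred product of increments) of $\mathbb{L}_p$-size $\Delta_l^{1/2}$ (resp. $\Delta_l$); applying Cauchy–Schwarz/Hölder to split the product and then Lemma \ref{lem:diff1} with exponent $2p$ yields the terms $\Delta_l^{p/2}\{\|x_0^l-x_0^{l-1}\|^{2p}+\cdots\}+\Delta_l^{3p/2}$ after one accounts for the $\Delta_l^{p/2}$ coming from the displacement bound itself. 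The term $M^{(3)}_{i,k}$ involves the difference $h_{ijm}(X^l)-h_{ijm}(X^{l,a})=O(\|X^l-X^{l,a}\|)$ times an antisymmetric combination of Brownian increments of $\mathbb{L}_p$-size $\Delta_l$, giving the same type of contribution. The terms $M^{(j,l)}_{i,k}$, $M^{(j,l,a)}_{i,k}$ are single-level one-step remainders of $\mathbb{L}_p$-size $\Delta_l^{3/2}$ (an $O(\Delta_l)$ or $O(\Delta_l^{1/2})$ factor times a Brownian increment of size $\Delta_l^{1/2}$, or a coefficient increment $h_{ijm}(X^l_{(k+1)\Delta_l})-h_{ijm}(X^l_{k\Delta_l})=O(\|X^l_{(k+1)\Delta_l}-X^l_{k\Delta_l}\|)=O_{\mathbb{L}_p}(\Delta_l^{1/2})$ times a centred product of size $\Delta_l$), and these contribute the $\Delta_l^{3p/2}$ term with no initial-displacement dependence. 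Collecting the worst exponents of $\Delta_l$ against each power of $\|x_0^l-x_0^{l-1}\|$, $\|x_0^{l,a}-x_0^{l-1}\|$ gives exactly the stated bound.

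The main obstacle is bookkeeping rather than conceptual: one must track, term by term, the precise pairing of powers of $\Delta_l$ with powers of the initial displacements, being careful in the Hölder splittings so that the displacement factors always appear at exponent $2p$ (so that Lemma \ref{lem:diff1} applies with $p$ replaced by $2p$) while the remaining Brownian/one-step factors are estimated at the conjugate exponent, and to check that the midpoint symmetry genuinely cancels the first-order Taylor terms so that one gains the quadratic power $\|X^l-X^{l,a}\|^2$ in $R^{(1)}_{i,k}$ and $M^{(2)}_{i,k}$ — this cancellation is what keeps the exponent of $\Delta_l$ in the pure-diffusion error at $3/2$ rather than $1$, and is the crux of why the antithetic scheme works. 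Everything else follows the template of \cite[Lemma 4.9]{ml_anti} verbatim, with the generic constant now additionally allowed to depend on the (uniform) bounds in (A\ref{ass:diff1}) but not on the initial points.
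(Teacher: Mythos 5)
Your overall architecture matches the paper's proof: split $R_k$ and $M_k$ into the single-level remainders $R^l,R^{l,a},M^{(j,l)},M^{(j,l,a)}$ (handled exactly as in Giles--Szpruch, with no dependence on the starting points thanks to the boundedness in (A\ref{ass:diff1})) and the antithetic-difference terms $R^{(1)},M^{(1)},M^{(2)},M^{(3)}$, which are controlled by Taylor expansion around the midpoint $\overline{X}^l_{k\Delta_l}$, independence of the Brownian increments, and Lemma \ref{lem:diff1}. Your treatment of $R^{(1)}_{i,k}$ and of the single-level remainders is correct (the aside about summing over $k$ is spurious --- the claim is a per-$k$ bound under a max, as you yourself then note).

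There is, however, a concrete mis-step in the martingale part: you assert that the coefficient difference in $M^{(1)}_{i,k}$, namely $\tfrac12\big(\beta_{ij}(X^l_{k\Delta_l})+\beta_{ij}(X^{l,a}_{k\Delta_l})\big)-\beta_{ij}(\overline{X}^l_{k\Delta_l})$, is only $\mathcal{O}(\|X^l_{k\Delta_l}-X^{l,a}_{k\Delta_l}\|)$, and you locate the quadratic midpoint cancellation in $M^{(2)}_{i,k}$ instead. With a first-order bound, independence of the increment gives $\mathbb{E}[|M^{(1)}_{i,k}|^p]\leq C\Delta_l^{p/2}\,\mathbb{E}[\|X^l_{k\Delta_l}-X^{l,a}_{k\Delta_l}\|^{p}]\leq C(\Delta_l^{p}+\Delta_l^{p/2}\{\|x_0^l-x_0^{l-1}\|^{p}+\|x_0^{l,a}-x_0^{l-1}\|^{p}\})$, and the $\Delta_l^{p}$ term (with no initial displacement) is strictly larger than the claimed $\Delta_l^{3p/2}$, so the stated bound does not follow from your route. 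The paper instead uses the \emph{second-order} expansion of $\beta$ around the midpoint (the linear terms cancel because $\overline{X}^l$ is exactly the average), giving a factor $\|X^l_{k\Delta_l}-X^{l,a}_{k\Delta_l}\|^2$ and hence $\Delta_l^{p/2}\mathbb{E}[\|X^l-X^{l,a}\|^{2p}]\leq C(\Delta_l^{3p/2}+\Delta_l^{p/2}\{\cdots\}^{2p})$; this needs the bounded second derivatives of $\beta$ supplied by (A\ref{ass:diff1}). Conversely, for $M^{(2)}_{i,k}$ and $M^{(3)}_{i,k}$ a second-order expansion of $h$ is neither available under (A\ref{ass:diff1}) (only first derivatives of $h_{ijk}$ are assumed bounded) nor needed: the first-order bound suffices there because the centred Brownian products already contribute $\Delta_l$ in $\mathbb{L}_p$, yielding $\Delta_l^{p}(\Delta_l^{p/2}+\{\cdots\}^{p})$, which sits inside the claimed bound. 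So swap the roles you assigned to $M^{(1)}$ and $M^{(2)}$ and the argument closes as in the paper.
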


\begin{proof}
The proof of this result essentially follows \cite{ml_anti} and controlling the terms (in $\mathbb{L}_p$). The expressions $\frac{1}{2}\sum_{j=1}^3\{M_k^{(j,l)} + M_k^{(j,l,a)}\}$ and $\frac{1}{2}\{R_{i,k}^l+R_{i,k}^{l,a}\}$ can be dealt with exactly as in 
\cite[Lemma 4.7]{ml_anti} so we need only consider the terms $\sum_{j=1}^3 M_k^{(j)}$ and $R_k^{(1)}$. It will suffice to control in $\mathbb{L}_p$ any of the $d$ co-ordinates of the afore mentioned vectors, which is what we do below.

Beginning with $R_{i,k}^{(1)}$, using the second order Taylor expansion in \cite{ml_anti}, one has
$$
R_{i,k}^{(1)} = \frac{1}{8}\sum_{(j,m)\in\{1,\dots,d\}^2}\left(\frac{\partial^2\alpha_i}{\partial x_j\partial x_m}(\xi_1^l)+
\frac{\partial^2\alpha_i}{\partial x_j\partial x_m}(\xi_2^l)\right)(X_{j,k\Delta_l}^l - X_{j,k\Delta_l}^{l,a})(X_{m,k\Delta_l}^l - X_{m,k\Delta_l}^{l,a})\Delta_l
$$
where $\xi_1^l$ is some point between $\overline{X}^l_{k\Delta_l}$ and $X^l_{k\Delta_l}$ and
$\xi_2^l$ is some point between $\overline{X}^l_{k\Delta_l}$ and $X^{l,a}_{k\Delta_l}$. Then it follows easily that 
$$
\mathbb{E}[|R_{i,k}^{(1)}|^p] \leq C\Delta_l^p\mathbb{E}[\|X_{k\Delta_l}^l -X_{k\Delta_l}^{l,a}\|^{2p}].
$$
Application of Lemma \ref{lem:diff1} yields that 
$$
\mathbb{E}[|R_{i,k}^{(1)}|^p] \leq C\Delta_l^p\left(\Delta_l^p + \|x_0^{l}-x_0^{l-1}\|^{2p}+\|x_0^{l,a}-x_0^{l-1}\|^{2p}\right).
$$
which is the desired result.

For $M_{i,k}^{(1)}$ again one has
\begin{eqnarray*}
M_{i,k}^{(1)} & = & \frac{1}{16}\sum_{(j,m,n)\in\{1,\dots,d\}^3}\left\{
\frac{\partial^2\beta_{ij}}{\partial x_m\partial x_n}(\xi_3^l) + 
\frac{\partial^2\beta_{ij}}{\partial x_m\partial x_n}(\xi_4^l)
\right\}
(X_{m,k\Delta_l}^l - X_{m,k\Delta_l}^{l,a})(X_{n,k\Delta_l}^l - X_{n,k\Delta_l}^{l,a})\times \\ & & [W_{j,(k+2)\Delta_l}-W_{j,k\Delta_l}]
\end{eqnarray*}
where $\xi_3^l$ is some point between $\overline{X}^l_{k\Delta_l}$ and $X^l_{k\Delta_l}$ and
$\xi_4^l$ is some point between $\overline{X}^l_{k\Delta_l}$ and $X^{l,a}_{k\Delta_l}$. Then using the independence of the Brownian increment (with the random variables $(X_{m,k\Delta_l}^l - X_{m,k\Delta_l}^{l,a})(X_{n,k\Delta_l}^l - X_{n,k\Delta_l}^{l,a})$) and the same approach as above, one has
$$
\mathbb{E}[|M_{i,k}^{(1)}|^p] \leq C\Delta_l^{p/2}\mathbb{E}[\|X_{k\Delta_l}^l -X_{k\Delta_l}^{l,a}\|^{2p}]
$$
and applying Lemma \ref{lem:diff1} one has
$$
\mathbb{E}[|M_{i,k}^{(1)}|^p] \leq C\Delta_l^{p/2}\left(\Delta_l^p + \|x_0^{l}-x_0^{l-1}\|^{2p}+\|x_0^{l,a}-x_0^{l-1}\|^{2p}\right).
$$

For $M_{i,k}^{(2)}$ one has
\begin{eqnarray*}
M_{i,k}^{(2)} & = & \frac{1}{4}\sum_{(j,m,n)\in\{1,\dots,d\}^3}\left\{
\frac{\partial h_{ijm}}{\partial x_n}(\xi_5^l) +
\frac{\partial h_{ijm}}{\partial x_n}(\xi_6^l)
\right\}(X_{n,k\Delta_l}^l - X_{n,k\Delta_l}^{l,a})\times \\ & &
\Big([W_{j,(k+2)\Delta_l}-W_{j,k\Delta_l}][W_{m,(k+2)\Delta_l}-W_{m,k\Delta_l}]-\Delta_{l-1}\Big)
\end{eqnarray*}
where $\xi_5^l$ is some point between $\overline{X}^l_{k\Delta_l}$ and $X^l_{k\Delta_l}$ and
$\xi_6^l$ is some point between $\overline{X}^l_{k\Delta_l}$ and $X^{l,a}_{k\Delta_l}$. 
As the Brownian increments are independent of $(X_{n,k\Delta_l}^l - X_{n,k\Delta_l}^{l,a})$ and the dimensions are
also independent of each other, one yields
$$
\mathbb{E}[|M_{i,k}^{(2)}|^p] \leq C\Delta_l^{p}\mathbb{E}[\|X_{k\Delta_l}^l -X_{k\Delta_l}^{l,a}\|^{p}].
$$
Lemma \ref{lem:diff1} gives
$$
\mathbb{E}[|M_{i,k}^{(2)}|^p] \leq C\Delta_l^{p}\left(\Delta_l^{p/2} + \|x_0^{l}-x_0^{l-1}\|^{p}+\|x_0^{l,a}-x_0^{l-1}\|^{p}\right).
$$

For $M_{i,k}^{(3)}$ 
\begin{eqnarray*}
M_{i,k}^{(3)} & = & \frac{1}{4}\sum_{(j,m,n)\in\{1,\dots,d\}^3}\left\{
\frac{\partial h_{ijm}}{\partial x_n}(\xi_7^l) + 
\frac{\partial h_{ijm}}{\partial x_n}(\xi_8^l)
\right\}
(X_{k\Delta_l}^l-X_{k\Delta_l}^{l,a})
\Big([W_{j,(k+1)\Delta_l}-W_{j,k\Delta_l}]\times \\ & & [W_{m,(k+2)\Delta_l}-W_{m,(k+1)\Delta_l}] - 
[W_{m,(k+1)\Delta_l}-W_{m,k\Delta_l}][W_{j,(k+2)\Delta_l}-W_{j,(k+1)\Delta_l}]
\Big)
\end{eqnarray*}
where $\xi_7^l$ is some point between $\overline{X}^l_{k\Delta_l}$ and $X^l_{k\Delta_l}$ and
$\xi_8^l$ is some point between $\overline{X}^l_{k\Delta_l}$ and $X^{l,a}_{k\Delta_l}$.  Using essentially the same approach as for $M_{i,k}^{(2)}$ one obtains
$$
\mathbb{E}[|M_{i,k}^{(3)}|^p] \leq C\Delta_l^{p}\left(\Delta_l^{p/2} + \|x_0^{l}-x_0^{l-1}\|^{p}+\|x_0^{l,a}-x_0^{l-1}\|^{p}\right).
$$
which concludes the proof.
\end{proof}

\begin{prop}\label{prop:diff1}
Assume (A\ref{ass:diff1}). Then for any $p\in[1,\infty)$ there exists a $C<+\infty$ such that for any $l\in\mathbb{N}$
$$
\mathbb{E}\left[\max_{k\in\{0,2,\dots,\Delta_l^{-1}\}}\|\overline{X}_{k\Delta_l}^{l}-X_{k\Delta_l}^{l-1}\|^p\right]
\leq
$$
$$
C\left(
\Delta_l^p + \{\|x_0^{l}-x_0^{l-1}\|^{2p}+\|x_0^{l,a}-x_0^{l-1}\|^{2p}\} + \Delta_l^{\tfrac{p}{2}}
\{\|x_0^{l}-x_0^{l-1}\|^{p}+\|x_0^{l,a}-x_0^{l-1}\|^{p}\}
\right).
$$
\end{prop}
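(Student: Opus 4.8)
\textbf{Proof plan for Proposition \ref{prop:diff1}.}

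The plan is to follow the proof of \cite[Theorem 4.10]{ml_anti} essentially line by line, feeding in our modified inputs: Lemma \ref{lem:diff1} and Remark \ref{rem:diff1} in place of \cite[Lemma 4.6]{ml_anti} and Lemma \ref{lem:diff2} in place of \cite[Lemma 4.9]{ml_anti}, with \cite[Lemma 4.7, Corollary 4.8]{ml_anti} used unchanged. Concretely, \cite{ml_anti} shows that, over each coarse step $[k\Delta_l,(k+2)\Delta_l]$ ($k$ even), combining the two fine Milstein steps for $X^l$, the two for $X^{l,a}$ and averaging gives the one-step identity
$$
\overline X^l_{(k+2)\Delta_l}
=
\overline X^l_{k\Delta_l}
+ \alpha(\overline X^l_{k\Delta_l})\Delta_{l-1}
+ \beta(\overline X^l_{k\Delta_l})\,\delta W_k
+ H_{\Delta_{l-1}}(\overline X^l_{k\Delta_l},\delta W_k)
+ R_k + M_k ,
$$
where $\delta W_k:=W_{(k+2)\Delta_l}-W_{k\Delta_l}$ and $R_k,M_k$ are precisely the bounded-variation and martingale remainders defined just before Lemma \ref{lem:diff2}. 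Subtracting the coarse update $X^{l-1}_{(k+2)\Delta_l}=X^{l-1}_{k\Delta_l}+\alpha(X^{l-1}_{k\Delta_l})\Delta_{l-1}+\beta(X^{l-1}_{k\Delta_l})\delta W_k+H_{\Delta_{l-1}}(X^{l-1}_{k\Delta_l},\delta W_k)$ and setting $D_k:=\overline X^l_{k\Delta_l}-X^{l-1}_{k\Delta_l}$, assumption (A\ref{ass:diff1}) (bounded first and second derivatives of $\alpha,\beta$ and bounded derivatives of the $h_{ijk}$) makes the first three differences Lipschitz in $D_k$, so that $D_{k+2}=D_k+\Psi_k+R_k+M_k$ with $\|\Psi_k\|\le C\|D_k\|\left(\Delta_l+\|\delta W_k\|+\|\delta W_k\|^2\right)$, $\Psi_k$ decomposing into a finite-variation part and an $\mathcal F_{k\Delta_l}$-martingale increment.

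From this recursion I would iterate from $0$ to $n$ ($n$ even, $n\le\Delta_l^{-1}$), take $\mathbb L_p$ norms via the $C_p$-inequality, and handle $\sum_k\Psi_k$ and $\sum_k M_k$ by separating their finite-variation and martingale components. To the martingale components I apply the Burkholder--Davis--Gundy inequality together with Jensen / the power-mean inequality to bound the $\mathbb L_p$ norm of a sum of $\mathcal{O}(\Delta_l^{-1})$ increments by $\Delta_l^{-p/2}$ times the largest single $\mathbb L_p$ norm (it suffices to argue for $p\ge 2$, the case $p<2$ following by Jensen), while the finite-variation parts are controlled by the triangle inequality and the $\|D_k\|$-dependent pieces are absorbed by a discrete Gr\"onwall inequality whose constant depends only on $p$ and the bounds in (A\ref{ass:diff1}). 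This yields
$$
\mathbb E\!\left[\max_{k\in\{0,2,\dots,\Delta_l^{-1}\}}\|D_k\|^p\right]
\le
C\left(\mathbb E\big[\|D_0\|^p\big]
+ \Delta_l^{-p}\max_k\mathbb E[|R_k|^p]
+ \Delta_l^{-p/2}\max_k\mathbb E[|M_k|^p]\right),
$$
and, since $D_0=\overline X^l_0-X^{l-1}_0=\tfrac12\big((x_0^l-x_0^{l-1})+(x_0^{l,a}-x_0^{l-1})\big)$, inserting the bounds of Lemma \ref{lem:diff2} reproduces the three groups of terms on the right-hand side of the Proposition.

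The step I expect to be the main obstacle is not the Gr\"onwall/BDG bookkeeping but establishing the one-coarse-step identity above with remainder exactly $R_k+M_k$ when the three initial points are allowed to differ, and then verifying --- this being the content of Lemma \ref{lem:diff2}, and behind it Lemma \ref{lem:diff1} --- that the extra initial-displacement contributions do not spoil the $\Delta_l$-powers; a lost factor $\Delta_l^{1/2}$ here would degrade the final filter rate. A subsidiary but necessary check is that $\sum_k M_k$ is a genuine martingale in the even-indexed coarse filtration, i.e.\ that each of $M_k^{(1)},M_k^{(2)},M_k^{(3)},M_k^{(j,l)},M_k^{(j,l,a)}$ has conditional mean zero given $\mathcal F_{k\Delta_l}$ (each being a product of $\mathcal F_{k\Delta_l}$- or $\mathcal F_{(k+1)\Delta_l}$-measurable quantities with a mean-zero Brownian increment), which is what licenses the BDG step above.
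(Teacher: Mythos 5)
Your route is exactly the paper's route: the paper's entire proof of Proposition \ref{prop:diff1} is the remark that it is identical to \cite[Theorem 4.10]{ml_anti} with Lemma \ref{lem:diff2} used in place of \cite[Lemma 4.9]{ml_anti} (Lemma \ref{lem:diff1} and Remark \ref{rem:diff1} supplying the strong-error input), and your reconstruction of that argument --- the one-coarse-step identity with remainder $R_k+M_k$, the Lipschitz perturbation $\Psi_k$, triangle inequality for the finite-variation sums, BDG plus Jensen for the martingale sums, discrete Gr\"onwall --- is faithful to it. Your power bookkeeping is also right: $\Delta_l^{-p}\max_k\mathbb{E}[|R_k|^p]$ and $\Delta_l^{-p/2}\max_k\mathbb{E}[|M_k|^p]$, fed by Lemma \ref{lem:diff2}, give precisely the three displayed groups of terms.

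The step that does not close is your final sentence. Your displayed inequality retains $\mathbb{E}[\|D_0\|^p]$, which is of order $\|x_0^{l}-x_0^{l-1}\|^{p}+\|x_0^{l,a}-x_0^{l-1}\|^{p}$ \emph{without} the $\Delta_l^{p/2}$ prefactor, and this is not dominated by any of the three groups on the right-hand side of the Proposition: for a common displacement $\delta$ with $\Delta_l\ll\|\delta\|\ll 1$ one has $\|\delta\|^p\gg \Delta_l^p+\|\delta\|^{2p}+\Delta_l^{p/2}\|\delta\|^p$. Nor can a sharper argument remove it: taking $\alpha\equiv 0$, $\beta\equiv I_d$ (admissible under (A\ref{ass:diff1}), so $H\equiv 0$) gives $\overline{X}_{k\Delta_l}^{l}-X_{k\Delta_l}^{l-1}\equiv \tfrac{1}{2}(x_0^l+x_0^{l,a})-x_0^{l-1}$ for every even $k$, so the left-hand side equals $\|D_0\|^p$ exactly. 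In \cite{ml_anti} this term is absent only because the three starting points coincide, i.e.\ $D_0=0$; here they need not. So what your argument actually proves is the Proposition with the additional summand $\|x_0^{l}-x_0^{l-1}\|^{p}+\|x_0^{l,a}-x_0^{l-1}\|^{p}$ (equivalently $\|\tfrac{1}{2}(x_0^l+x_0^{l,a})-x_0^{l-1}\|^p$) on the right-hand side, and asserting that Lemma \ref{lem:diff2} ``reproduces the three groups of terms'' papers over the only genuinely new difficulty relative to \cite[Theorem 4.10]{ml_anti}. You should either carry this term explicitly (and then track it through Lemma \ref{lem:cpf2}, where the Proposition is invoked with resampled particles as starting points) or explain why it can be dispensed with in the intended application; the paper's one-line proof is silent on exactly this point, so your writeup should not be.
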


\begin{proof}
The proof is identical to \cite[Theorem 4.10]{ml_anti}, except that one uses Lemma \ref{lem:diff2} above, instead of \cite[Lemma 4.9]{ml_anti} and is hence omitted.
\end{proof}

\section{Proofs for Coupled Particle Filter}\label{app:cpf}

To continue with our proofs, we introduce an additional assumption. We drop dependence upon the data in $g(x_k,y_k)$
and simply write $g_k(x_k)$.

\begin{hypA}\label{ass:g}
\begin{itemize}
\item{For each $k\in\mathbb{N}$, $g_k\in\mathcal{B}_b(\mathsf{X})\cap\mathcal{C}^2(\mathsf{X},\mathbb{R})$.}
\item{For each $k\in\mathbb{N}$ there exists a $0<C<+\infty$ such that for any $x\in\mathsf{X}$ $g_k(x)\geq C$.}
\item{For each $k\in\mathbb{N}$ there exists a $0<C<+\infty$ such that for any $(x,j,m)\in\mathsf{X}\times\{1,\dots,d\}^2$:
$$
\max\left\{\Big|\frac{\partial g_{k}}{\partial x_j}(x)\Big|,\Big|\frac{\partial^2 g_{k}}{\partial x_j\partial x_m}(x)\Big|\right\} \leq C.
$$
}
\end{itemize}
\end{hypA}
In this section $\mathbb{E}$ denotes the expectation w.r.t.~the law that generates the AMLPF.

\subsection{Rate Proofs}\label{app:rates}

Our analysis will apply any coupling used in Algorithm \ref{alg:max_coup} step 2.~bullet 2. At any time point, $k$ of Algorithm  \ref{alg:coup_pf} we will denote the resampled index of particle $i\in\{1,\dots,N_l\}$ as $I_k^{i,l}$ (level $l$), $I_k^{i,l-1}$ (level $l-1$) and $I_k^{i,l,a}$ (level $l$ antithetic). Now let $I_k^l(i)=I_k^{i,l}$, $I_k^{l-1}(i)=I_k^{i,l-1}$, $I_k^{l,a}(i)=I_k^{i,l,a}$ and define $\mathsf{S}_k^l$ the collection of indices that choose the same ancestor at each resampling step, i.e.
\begin{eqnarray*}
\mathsf{S}_k^l & = & \{i\in\{1,\dots,N_l\}:I_k^l(i)=I_k^{l-1}(i)=I_k^{l,a}(i),I_{k-1}^l\circ I_k^l(i)=I_{k-1}^{l-1}\circ I_k^{l-1}(i)=I_{k-1}^{l,a}\circ I_k^{l,a}(i),\dots,\\ & & I_{1}^l\circ I_2^l\circ\cdots\circ I_k^l(i) = I_{1}^{l-1}\circ I_2^{l-1}\circ\cdots\circ I_k^{l-1}(i)=I_{1}^{l,a}\circ I_2^{l,a}\circ\cdots\circ I_k^{l,a}(i)\}.
\end{eqnarray*}
We use the convention that $\mathsf{S}_0^l=\{1,\dots,N_l\}$.
Denote the $\sigma-$field generated by the simulated samples, resampled samples and resampled indices up-to time
$k$ as $\hat{\mathcal{F}}_k^{l}$ and the $\sigma-$field  which does the same, except excluding the resampled samples and indices as $\mathcal{F}_k^{l}$. 

\begin{lem}\label{lem:cpf1}
Assume (A\ref{ass:diff1}-\ref{ass:g}). Then for any $(p,k)\in[1,\infty)\times\mathbb{N}$ there exists a $C<+\infty$ such that for any $(l,N_l)\in\mathbb{N}^2$
$$
\mathbb{E}\left[\frac{1}{N_l}\sum_{i\in\mathsf{S}_{k-1}^l}\left\{\|X_k^{i,l}-X_k^{i,l-1}\|^p + 
\|X_k^{i,l,a}-X_k^{i,l-1}\|^p + 
\|X_k^{i,l}-X_k^{i,l,a}\|^p
\right\}\right] \leq C\Delta_l^{p/2}.
$$
\end{lem}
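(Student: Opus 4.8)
The plan is to proceed by induction on the time index $k\in\mathbb{N}$, exploiting the structure of Algorithm \ref{alg:coup_pf} together with the single-step estimates of Proposition \ref{prop:diff1}, Lemma \ref{lem:diff1} and Remark \ref{rem:diff1}. The key observation is that for an index $i\in\mathsf{S}_{k-1}^l$, the three particles $X_{k-1}^{i,l}$, $X_{k-1}^{i,l-1}$, $X_{k-1}^{i,l,a}$ at time $k-1$ were all resampled from a common ancestor, so the starting points fed into Algorithm \ref{alg:milstein} at the coupled sampling step are exactly the (common) resampled value at time $k-1$ of that ancestor under the three marginal particle systems. Writing the three output coordinates in terms of the common-input quantities, the quantities $\|X_k^{i,l}-X_k^{i,l-1}\|$, $\|X_k^{i,l,a}-X_k^{i,l-1}\|$ and $\|X_k^{i,l}-X_k^{i,l,a}\|$ can be bounded, conditionally on $\hat{\mathcal{F}}_{k-1}^l$, using Proposition \ref{prop:diff1} (for the middle-vs-coarse discrepancy $\|\overline{X}^l-X^{l-1}\|$), Lemma \ref{lem:diff1} (for $\|X^l-X^{l,a}\|$) and Remark \ref{rem:diff1} (for $\|X^l-X^{l-1}\|$ and $\|X^{l,a}-X^{l-1}\|$), but now with initial-point discrepancies equal to the resampled discrepancies at time $k-1$ rather than zero.

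Concretely, I would first handle the base case $k=1$. Here $\mathsf{S}_0^l=\{1,\dots,N_l\}$ and all three chains are initialized at the \emph{same} starting point $x_0$ (Step 1 of Algorithm \ref{alg:coup_pf}), so the initial-point differences vanish and a direct application of Proposition \ref{prop:diff1}, Lemma \ref{lem:diff1} and Remark \ref{rem:diff1} with $\|x_0^l-x_0^{l-1}\|=\|x_0^{l,a}-x_0^{l-1}\|=0$ yields each of the three expectations bounded by $C\Delta_l^{p/2}$; averaging over $i$ (the bound is uniform in $i$) gives the claim at $k=1$. For the inductive step, fix $k\geq 2$, condition on $\hat{\mathcal{F}}_{k-1}^l$, and use the tower property. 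For $i\in\mathsf{S}_{k-1}^l$ let $j=j(i)$ be the common ancestor index so that the common resampled starting points are $\tilde X_{k-1}^{j,l}=\tilde X_{k-1}^{j,l-1}=\tilde X_{k-1}^{j,l,a}$ --- wait, these are equal since $i\in\mathsf{S}_{k-1}^l$, so in fact the initial-point differences are zero again at \emph{every} step for indices that have always chosen the same ancestor. This is the crucial simplification: membership in $\mathsf{S}_{k-1}^l$ forces the triple of starting points handed to Algorithm \ref{alg:milstein} to be identical, so the one-step bounds of Proposition \ref{prop:diff1}, Lemma \ref{lem:diff1} and Remark \ref{rem:diff1} apply with zero initial discrepancy, giving the conditional expectation of each $p$-th power $\leq C\Delta_l^{p/2}$ uniformly. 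Then
$$
\mathbb{E}\!\left[\frac{1}{N_l}\sum_{i\in\mathsf{S}_{k-1}^l}\|X_k^{i,l}-X_k^{i,l-1}\|^p\,\Big|\,\hat{\mathcal{F}}_{k-1}^l\right]\leq \frac{|\mathsf{S}_{k-1}^l|}{N_l}\,C\Delta_l^{p/2}\leq C\Delta_l^{p/2},
$$
and similarly for the other two terms; summing the three and taking expectations finishes the argument.

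The only subtlety --- and the part that needs care rather than the part that is hard --- is making the conditional independence structure precise: one must argue that, given $\hat{\mathcal{F}}_{k-1}^l$, the coupled samples $U_k^{i,l}$ for $i\in\mathsf{S}_{k-1}^l$ are generated by Algorithm \ref{alg:milstein} from a common starting point, which is immediate from the definition of $\mathsf{S}_{k-1}^l$ and the fact that the resampling in Algorithm \ref{alg:max_coup} assigns all three coordinates of $U_k^i$ the \emph{same} ancestor index exactly on the ``min'' event, while on the complement event the three coordinates differ and $i\notin\mathsf{S}_k^l$ (hence such $i$ never contribute to $\mathsf{S}$). One should also note the standard fact that conditioning on $\hat{\mathcal{F}}_{k-1}^l$ the starting points are $\hat{\mathcal{F}}_{k-1}^l$-measurable and bounded (they lie in $\mathsf{X}=\mathbb{R}^d$), so applying Proposition \ref{prop:diff1} etc.\ conditionally is legitimate because the constants there do not depend on the initial points under (A\ref{ass:diff1}). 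I expect no genuine obstacle; the main bookkeeping step is tracking the definition of $\mathsf{S}_{k-1}^l$ carefully enough to justify the ``common starting point'' claim, and everything else is a one-line conditioning argument combined with the already-established single-step estimates.
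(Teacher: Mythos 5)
There is a genuine gap, and it is the step you yourself flag as the ``crucial simplification'': membership of $i$ in $\mathsf{S}_{k-1}^l$ does \emph{not} force the three starting points handed to Algorithm \ref{alg:milstein} to be identical. What $\mathsf{S}_{k-1}^l$ guarantees is only that the three coordinates chose the same ancestor \emph{index} $j$ at every resampling step, so the triple of starting points is $(\hat{X}_{k-1}^{i,l},\hat{X}_{k-1}^{i,l-1},\hat{X}_{k-1}^{i,l,a})=(X_{k-1}^{j,l},X_{k-1}^{j,l-1},X_{k-1}^{j,l,a})$ for a common $j$. These are three \emph{different} values: they are the fine, coarse and antithetic positions carried by one coupled particle, which diverge from one another as the coupled dynamics evolve even when driven by the same Brownian increments from the same ancestor. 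If they really coincided, the coupled system would be degenerate (e.g.\ Lemma \ref{lem:cpf2} and indeed the multilevel variance reduction analysis would be trivial), and there would have been no reason for the paper to extend the results of \cite{ml_anti} in Section \ref{app:mil} to allow non-equal initial points $(x_0^l,x_0^{l-1},x_0^{l,a})$ --- the paper states explicitly that this extension ``is important when considering our subsequent proofs for the coupled (and multilevel) particle filter'', precisely because the resampled starting points differ.

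Consequently your inductive step collapses: you apply the one-step bounds with zero initial discrepancy and never actually use the induction hypothesis, whereas the correct argument (the paper's) conditions on $\hat{\mathcal{F}}_{k-1}^l$, applies Lemma \ref{lem:diff1} and Remark \ref{rem:diff1} with the \emph{nonzero} discrepancies $\|\hat{X}_{k-1}^{i,l}-\hat{X}_{k-1}^{i,l-1}\|$, $\|\hat{X}_{k-1}^{i,l,a}-\hat{X}_{k-1}^{i,l-1}\|$, $\|\hat{X}_{k-1}^{i,l}-\hat{X}_{k-1}^{i,l,a}\|$ on the right-hand side, rewrites these via the common ancestor indices $I_{k-1}^{i,\cdot}$, and then controls the resulting resampled sums as in \cite[Lemma D.3]{mlpf} so as to close a genuine recursion in $k$ (the time-$k$ bound is $C(\Delta_l^{p/2}+\text{time-}(k-1)\text{ quantity})$, with the constant growing with $k$, which is why the statement fixes $k$). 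Your base case $k=1$ is fine, but the proposal as written proves the lemma only under the false ``common starting value'' premise and therefore does not establish the result for $k\geq 2$.
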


\begin{proof}
The case $k=1$ follows from the work in \cite{ml_anti} (the case $p=[1,2)$ can be adapted from that paper), so we assume $k\geq 2$.
We have by conditioning on $\hat{\mathcal{F}}_{k-1}^{l}$ and applying Lemma \ref{lem:diff1} (see also Remark \ref{rem:diff1})
$$
\mathbb{E}\left[\frac{1}{N_l}\sum_{i\in\mathsf{S}_{k-1}^l}\left\{\|X_k^{i,l}-X_k^{i,l-1}\|^p + 
\|X_k^{i,l,a}-X_k^{i,l-1}\|^p + 
\|X_k^{i,l}-X_k^{i,l,a}\|^p
\right\}\right]\leq 
$$
$$
C\left(\Delta_l^{p/2} + \mathbb{E}\left[\frac{1}{N_l}\sum_{i\in\mathsf{S}_{k-1}^l}\left\{\|\hat{X}_{k-1}^{i,l}-\hat{X}_{k-1}^{i,l-1}\|^p + 
\|\hat{X}_{k-1}^{i,l,a}-\hat{X}_{k-1}^{i,l-1}\|^p + 
\|\hat{X}_{k-1}^{i,l}-\hat{X}_{k-1}^{i,l,a}\|^p
\right\}\right]  \right).
$$
One can exchangeably write the expectation on the R.H.S.~as
$$
\mathbb{E}\left[\frac{1}{N_l}\sum_{i\in\mathsf{S}_{k-1}^l}\left\{\|\hat{X}_{k-1}^{I_{k-1}^{i,l},l}-\hat{X}_{k-1}^{I_{k-1}^{i,l-1},l-1}\|^p + 
\|\hat{X}_{k-1}^{I_{k-1}^{i,l,a},l,a}-\hat{X}_{k-1}^{I_{k-1}^{i,l-1},l-1}\|^p + 
\|\hat{X}_{k-1}^{I_{k-1}^{i,l},l}-\hat{X}_{k-1}^{I_{k-1}^{i,l,a},l,a}\|^p
\right\}\right]. 
$$
The proof from here is then essentially identical (up-to the fact that one has three indices instead of two) to that of \cite[Lemma D.3]{mlpf} and is hence omitted.
\end{proof}

\begin{lem}\label{lem:cpf2}
Assume (A\ref{ass:diff1}-\ref{ass:g}). Then for any $(p,k)\in[1,\infty)\times\mathbb{N}$ there exists a $C<+\infty$ such that for any $(l,N_l)\in\mathbb{N}^2$
$$
\mathbb{E}\left[\frac{1}{N_l}\sum_{i\in\mathsf{S}_{k-1}^l}\|\overline{X}_k^{i,l}-X_k^{i,l-1}\|^p\right] \leq C\Delta_l^{p}.
$$
\end{lem}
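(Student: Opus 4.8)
The plan is to follow the template of the proof of Lemma~\ref{lem:cpf1}, replacing the one-step estimate Lemma~\ref{lem:diff1} by the sharper bound of Proposition~\ref{prop:diff1}, and then to absorb the ``boundary'' terms that Proposition~\ref{prop:diff1} generates by a second appeal to Lemma~\ref{lem:cpf1}. The base case $k=1$ is immediate: there $\mathsf{S}_0^l=\{1,\dots,N_l\}$, every triple $(X_1^{i,l},\bar X_1^{i,l-1},X_1^{i,l,a})$ is generated by Algorithm~\ref{alg:milstein} from the common starting point, so the boundary terms in Proposition~\ref{prop:diff1} vanish and $\mathbb{E}[\|\overline{X}_1^{i,l}-X_1^{i,l-1}\|^p]\leq C\Delta_l^p$ for each $i$, whence averaging gives the claim (this can also be read off directly from \cite{ml_anti}).

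For $k\geq 2$ I would condition on $\hat{\mathcal{F}}_{k-1}^l$. Since $\mathsf{S}_{k-1}^l$ is $\hat{\mathcal{F}}_{k-1}^l$-measurable and, given $\hat{\mathcal{F}}_{k-1}^l$, the triple for particle $i$ is produced by Algorithm~\ref{alg:milstein} from the resampled starting point $(\hat{X}_{k-1}^{i,l},\hat{X}_{k-1}^{i,l-1},\hat{X}_{k-1}^{i,l,a})$, Proposition~\ref{prop:diff1} gives, after averaging and taking expectations,
\[
\mathbb{E}\!\left[\frac{1}{N_l}\sum_{i\in\mathsf{S}_{k-1}^l}\|\overline{X}_k^{i,l}-X_k^{i,l-1}\|^p\right]
\leq C\Delta_l^p + C\,\mathbb{E}\!\left[\frac{1}{N_l}\sum_{i\in\mathsf{S}_{k-1}^l}\Big(D_{k-1}^{i,2p}+\Delta_l^{p/2}D_{k-1}^{i,p}\Big)\right],
\]
where $D_{k-1}^{i,q}:=\|\hat{X}_{k-1}^{i,l}-\hat{X}_{k-1}^{i,l-1}\|^q+\|\hat{X}_{k-1}^{i,l,a}-\hat{X}_{k-1}^{i,l-1}\|^q$.

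The heart of the argument is then the resampling step, carried out exactly as in the proof of Lemma~\ref{lem:cpf1} (and, ultimately, \cite[Lemma~D.3]{mlpf}): for $i\in\mathsf{S}_{k-1}^l$ the three resampled indices at time $k-1$ coincide, with common value lying in $\mathsf{S}_{k-2}^l$, so rewriting the sum in terms of ancestors and conditioning appropriately, using that under (A\ref{ass:g}) the resampling weights are bounded by $C/N_l$ (hence every particle has a uniformly bounded expected number of offspring), I obtain for every $q\in[1,\infty)$
\[
\mathbb{E}\!\left[\frac{1}{N_l}\sum_{i\in\mathsf{S}_{k-1}^l}D_{k-1}^{i,q}\right]
\leq C\,\mathbb{E}\!\left[\frac{1}{N_l}\sum_{j\in\mathsf{S}_{k-2}^l}\Big(\|X_{k-1}^{j,l}-X_{k-1}^{j,l-1}\|^q+\|X_{k-1}^{j,l,a}-X_{k-1}^{j,l-1}\|^q\Big)\right]
\leq C\Delta_l^{q/2},
\]
the final bound being Lemma~\ref{lem:cpf1} applied at time $k-1$ with exponent $q$. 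Taking $q=2p$ bounds the first bracketed term by $C\Delta_l^p$, and taking $q=p$ bounds $\Delta_l^{p/2}D_{k-1}^{i,p}$ by $C\Delta_l^{p/2}\cdot\Delta_l^{p/2}=C\Delta_l^p$; adding the three contributions yields the stated bound.

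I expect the main obstacle to be purely bookkeeping rather than a new estimate: the offspring-counting/resampling step is borrowed essentially verbatim from Lemma~\ref{lem:cpf1}, so the genuinely new feature is only that Proposition~\ref{prop:diff1} produces a \emph{quadratic} boundary term, which forces the offspring argument to be invoked at exponent $2p$ as well as $p$ --- harmless, since Lemma~\ref{lem:cpf1} holds for all $p\in[1,\infty)$. Conceptually, the improved one-step rate $\Delta_l^p$ for the antithetic average (against $\Delta_l^{p/2}$ for plain level differences) survives the particle-filter machinery precisely because the $q=2p$ instance of Lemma~\ref{lem:cpf1} already delivers order $\Delta_l^p$.
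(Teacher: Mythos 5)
Your proposal is correct and follows essentially the same route as the paper: condition on $\hat{\mathcal{F}}_{k-1}^{l}$, replace Lemma \ref{lem:diff1} by Proposition \ref{prop:diff1}, transfer the resulting quadratic and linear boundary terms through the resampling step exactly as in Lemma \ref{lem:cpf1} (i.e.\ the calculations of \cite[Lemma D.3]{mlpf}), and conclude by invoking Lemma \ref{lem:cpf1} at time $k-1$ with exponents $2p$ and $p$. The paper's proof is precisely this argument, stated more tersely.
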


\begin{proof}
Following the start of the proof of Lemma \ref{lem:cpf1}, except using Proposition \ref{prop:diff1} instead of Lemma \ref{lem:diff1} one can deduce that 
$$
\mathbb{E}\left[\frac{1}{N_l}\sum_{i\in\mathsf{S}_{k-1}^l}\|\overline{X}_k^{i,l}-X_k^{i,l-1}\|^p\right] \leq 
$$
$$
C\Bigg(
\Delta_l^p + \mathbb{E}\left[\frac{1}{N_l}\sum_{i\in\mathsf{S}_{k-1}^l}
\Big\{\|\hat{X}_{k-1}^{I_{k-1}^{i,l},l}-\hat{X}_{k-1}^{I_{k-1}^{i,l-1},l-1}\|^{2p} + 
\|\hat{X}_{k-1}^{I_{k-1}^{i,l,a},l,a}-\hat{X}_{k-1}^{I_{k-1}^{i,l-1},l-1}\|^{2p}\Big\}\right] +
$$
$$
\Delta_l^{p/2}
\mathbb{E}\left[\frac{1}{N_l}\sum_{i\in\mathsf{S}_{k-1}^l}
\Big\{\|\hat{X}_{k-1}^{I_{k-1}^{i,l},l}-\hat{X}_{k-1}^{I_{k-1}^{i,l-1},l-1}\|^{p} + 
\|\hat{X}_{k-1}^{I_{k-1}^{i,l,a},l,a}-\hat{X}_{k-1}^{I_{k-1}^{i,l-1},l-1}\|^{p}\Big\}\right]  
\Bigg).
$$
From here, one can follow the calculations in \cite[Lemma D.3]{mlpf} to deduce that
$$
\mathbb{E}\left[\frac{1}{N_l}\sum_{i\in\mathsf{S}_{k-1}^l}\|\overline{X}_k^{i,l}-X_k^{i,l-1}\|^p\right] \leq 
$$
$$
C\Bigg(\Delta_l^p + \mathbb{E}\left[\frac{1}{N_l}\sum_{i\in\mathsf{S}_{k-2}^l}\left\{\|X_{k-1}^{i,l}-X_{k-1}^{i,l-1}\|^{2p} + 
\|X_{k-1}^{i,l,a}-X_{k-1}^{i,l-1}\|^{2p} + 
\|X_{k-1}^{i,l}-X_{k-1}^{i,l,a}\|^{2p}
\right\}\right] +
$$
$$
\Delta_l^{p/2}\mathbb{E}\left[\frac{1}{N_l}\sum_{i\in\mathsf{S}_{k-2}^l}\left\{\|X_{k-1}^{i,l}-X_{k-1}^{i,l-1}\|^{p} + 
\|X_{k-1}^{i,l,a}-X_{k-1}^{i,l-1}\|^{p} + 
\|X_{k-1}^{i,l}-X_{k-1}^{i,l,a}\|^{p}
\right\}\right]
\Bigg).
$$
Application of Lemma \ref{lem:cpf1} concludes the result.
\end{proof}

\begin{lem}\label{lem:cpf_weight}
Assume (A\ref{ass:diff1}-\ref{ass:g}). Then for any $k\in\mathbb{N}$ there exists a $C<+\infty$ such that for any $(l,N_l)\in\mathbb{N}^2$
$$
\mathbb{E}\left[\mathbb{I}_{\mathsf{S}_{k-1}^l}(i)\left|\left\{\frac{g_{k}(X_k^{i,l})}{\sum_{j=1}^{N_l}g_{k}(X_k^{j,l})}-
\frac{g_{k}(X_k^{i,l,a})}{\sum_{j=1}^{N_l}g_{k}(X_k^{j,l,a})}\right\}
\right|\right] \leq C\left(\frac{\Delta_l^{1/2}}{N_l}+ 
\frac{1}{N_l}\left\{
1-\mathbb{E}\left[\frac{\textrm{\emph{Card}}(\mathsf{S}_{k-1}^l)}{N_l}\right]
\right\}
\right).
$$
\end{lem}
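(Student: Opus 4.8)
The plan is to start from the exact algebraic identity for the difference of the two self-normalised weights. Writing $g_k^{i,l}:=g_k(X_k^{i,l})$ and $g_k^{i,l,a}:=g_k(X_k^{i,l,a})$,
\[
\frac{g_k^{i,l}}{\sum_{j=1}^{N_l}g_k^{j,l}}-\frac{g_k^{i,l,a}}{\sum_{j=1}^{N_l}g_k^{j,l,a}}
=\frac{\sum_{j=1}^{N_l}\big(g_k^{i,l}g_k^{j,l,a}-g_k^{i,l,a}g_k^{j,l}\big)}{\big(\sum_{j=1}^{N_l}g_k^{j,l}\big)\big(\sum_{j=1}^{N_l}g_k^{j,l,a}\big)}.
\]
By (A\ref{ass:g}) the function $g_k$ is bounded and is bounded below by a strictly positive constant, so the denominator is $\geq CN_l^2$ while $\sum_{j}g_k^{j,l}\leq CN_l$; it therefore suffices to control the numerator in $\mathbb{L}_1$ after multiplication by $\mathbb{I}_{\mathsf{S}_{k-1}^l}(i)$.

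Next I would telescope each summand: adding and subtracting $g_k^{i,l}g_k^{j,l}$ gives $g_k^{i,l}g_k^{j,l,a}-g_k^{i,l,a}g_k^{j,l}=g_k^{i,l}\big(g_k^{j,l,a}-g_k^{j,l}\big)+g_k^{j,l}\big(g_k^{i,l}-g_k^{i,l,a}\big)$, so with the upper bounds above
\[
\Big|\sum_{j=1}^{N_l}\big(g_k^{i,l}g_k^{j,l,a}-g_k^{i,l,a}g_k^{j,l}\big)\Big|
\leq C\sum_{j=1}^{N_l}\big|g_k^{j,l,a}-g_k^{j,l}\big|+CN_l\,\big|g_k^{i,l}-g_k^{i,l,a}\big|.
\]
I would then multiply by $\mathbb{I}_{\mathsf{S}_{k-1}^l}(i)$ and split the $j$-sum: for $j\in\mathsf{S}_{k-1}^l$ the Lipschitz bound from (A\ref{ass:g}) gives $|g_k^{j,l,a}-g_k^{j,l}|\leq C\|X_k^{j,l,a}-X_k^{j,l}\|$, while for $j\notin\mathsf{S}_{k-1}^l$ I simply use $|g_k^{j,l,a}-g_k^{j,l}|\leq C$, contributing $C\big(N_l-\mathrm{Card}(\mathsf{S}_{k-1}^l)\big)$; for the last term the indicator forces $i\in\mathsf{S}_{k-1}^l$, so the Lipschitz bound applies to it as well. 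Dividing by the denominator yields
\[
\mathbb{I}_{\mathsf{S}_{k-1}^l}(i)\Big|W_k^{i,l}-W_k^{i,l,a}\Big|
\leq \frac{C}{N_l^2}\sum_{j\in\mathsf{S}_{k-1}^l}\|X_k^{j,l,a}-X_k^{j,l}\|
+\frac{C}{N_l^2}\big(N_l-\mathrm{Card}(\mathsf{S}_{k-1}^l)\big)
+\frac{C}{N_l}\,\mathbb{I}_{\mathsf{S}_{k-1}^l}(i)\|X_k^{i,l}-X_k^{i,l,a}\|.
\]

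Finally I would take expectations of the three terms. The first is $\tfrac{C}{N_l}\,\mathbb{E}\big[\tfrac{1}{N_l}\sum_{j\in\mathsf{S}_{k-1}^l}\|X_k^{j,l,a}-X_k^{j,l}\|\big]\leq C\Delta_l^{1/2}/N_l$ by Lemma \ref{lem:cpf1} with $p=1$; the second is exactly $\tfrac{C}{N_l}\big(1-\mathbb{E}[\mathrm{Card}(\mathsf{S}_{k-1}^l)/N_l]\big)$; for the third, exchangeability of the particle labels under the AMLPF law makes $\mathbb{E}\big[\mathbb{I}_{\mathsf{S}_{k-1}^l}(i)\|X_k^{i,l}-X_k^{i,l,a}\|\big]$ independent of $i$, hence equal to $\mathbb{E}\big[\tfrac{1}{N_l}\sum_{j\in\mathsf{S}_{k-1}^l}\|X_k^{j,l}-X_k^{j,l,a}\|\big]\leq C\Delta_l^{1/2}$, again by Lemma \ref{lem:cpf1}, so that term is $\leq C\Delta_l^{1/2}/N_l$. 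Summing the three bounds gives the stated inequality. The delicate point — the step I would handle most carefully — is that the indicator pins a single index $i$ whereas the numerator ranges over all $j$; this is precisely what forces the split of $\sum_j$ into the \emph{coupled} part (indices in $\mathsf{S}_{k-1}^l$, small by Lemma \ref{lem:cpf1}) and the \emph{decoupled} part (controlled only by its cardinality, which produces the $1-\mathbb{E}[\mathrm{Card}(\mathsf{S}_{k-1}^l)/N_l]$ term), with the exchangeability argument for the leftover $i$-indexed term being the other point that needs a line of justification.
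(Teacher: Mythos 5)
Your proof is correct and follows essentially the same route as the paper: after expanding the weight difference over a common denominator and telescoping, your two pieces coincide (up to which of the two sums sits in the denominator) with the paper's elementary decomposition, and the remaining ingredients — the uniform upper and lower bounds on $g_k$ from (A\ref{ass:g}), the split of the $j$-sum into indices in $\mathsf{S}_{k-1}^l$ versus its complement, the Lipschitz bound, and Lemma \ref{lem:cpf1} — are exactly those used there. Your explicit exchangeability argument for the single-index term $\mathbb{E}\left[\mathbb{I}_{\mathsf{S}_{k-1}^l}(i)\|X_k^{i,l}-X_k^{i,l,a}\|\right]$ is the same step the paper invokes implicitly when it applies Lemma \ref{lem:cpf1}, so nothing further is needed.
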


\begin{proof}
We have the elementary decomposition
$$
\frac{g_{k}(X_k^{i,l})}{\sum_{j=1}^{N_l}g_{k}(X_k^{j,l})}-
\frac{g_{k}(X_k^{i,l,a})}{\sum_{j=1}^{N_l}g_{k}(X_k^{j,l,a})} = \frac{g_{k}(X_k^{i,l})-g_{k}(X_k^{i,l,a})}{\sum_{j=1}^{N_l}g_{k}(X_k^{j,l})} + g_{k}(X_k^{i,l,a})\left(
\frac{\sum_{j=1}^{N_l}g_{k}(X_k^{j,l,a})-\sum_{j=1}^{N_l}g_{k}(X_k^{j,l})}{\sum_{j=1}^{N_l}g_{k}(X_k^{j,l,a})\sum_{j=1}^{N_l}g_{k}(X_k^{j,l})}
\right).
$$
So application of the triangular-inequality along with the uniform in $x$ lower and upper-bounds on $g_k$ gives
$$
\mathbb{E}\left[\mathbb{I}_{\mathsf{S}_{k-1}^l}(i)\left|\left\{\frac{g_{k}(X_k^{i,l})}{\sum_{j=1}^{N_l}g_{k}(X_k^{j,l})}-
\frac{g_{k}(X_k^{i,l,a})}{\sum_{j=1}^{N_l}g_{k}(X_k^{j,l,a})}\right\}
\right|\right] \leq
$$
$$
\frac{C}{N_l}\Bigg(\mathbb{E}[\mathbb{I}_{\mathsf{S}_{k-1}^l}(i)|g_{k}(X_k^{i,l})-g_{k}(X_k^{i,l,a})|] +
\frac{1}{N_l}\mathbb{E}\left[\sum_{j\in \mathsf{S}_{k-1}^l}|g_{k}(X_k^{j,l})-g_{k}(X_k^{j,l,a})|\right] + 
1-\mathbb{E}\left[\frac{\textrm{Card}(\mathsf{S}_{k-1}^l)}{N_l}\right]
\Bigg).
$$
Using the Lipschitz property of $g_k$ along with Lemma \ref{lem:cpf1} allows one to conclude the result.
\end{proof}

\begin{rem}\label{rem:cpf_weight} 
Using a similar approach to the above proof, one can establish the following result. Assume (A\ref{ass:diff1}-\ref{ass:g}). Then for any $k\in\mathbb{N}$ there exists a $C<+\infty$ such that for any $(l,N_l)\in\mathbb{N}^2$
$$
\mathbb{E}\left[\mathbb{I}_{\mathsf{S}_{k-1}^l}(i)\left|\left\{\frac{g_{k}(X_k^{i,l})}{\sum_{j=1}^{N_l}g_{k}(X_k^{j,l})}-
\frac{g_{k}(X_k^{i,l,a})}{\sum_{j=1}^{N_l}g_{k}(X_k^{j,l,a})}\right\}
\right|^2\right] \leq C\left(\frac{\Delta_l^{1/2}}{N_l^2}+ 
\frac{1}{N_l^2}\left\{
1-\mathbb{E}\left[\frac{\textrm{\emph{Card}}(\mathsf{S}_{k-1}^l)}{N_l}\right]
\right\}
\right).
$$
\end{rem}

The following is a useful Lemma that we will need below and whose proof consists of tedious algebra and is hence omitted.

\begin{lem}\label{lem:tech_lem_rat}
Let $(G^l,f^l,G^a,f^a,G^{l-1},f^{l-1})\in\mathbb{R}^6$ with $(f^l,f^a,f^{l-1})$ non-zero then
\begin{eqnarray*}
\frac{\tfrac{1}{2}G^l}{f^l} + \frac{\tfrac{1}{2}G^a}{f^a} - \frac{G^{l-1}}{f^{l-1}} & = & \frac{1}{f^{l-1}}\Big(\tfrac{1}{2}G^l+\tfrac{1}{2}G^a-G^{l-1}\Big) + \frac{1}{f^{a}f^{l}f^{l-1}}\Big(
\tfrac{1}{2}(G^l-G^a)(f^{l-1}-f^l)f^a + \\ & &
\tfrac{1}{2}G^a\Big[
(f^a-f^l)(f^{l-1}-f^l) - 2f^l\Big\{
\tfrac{1}{2}f^l + \tfrac{1}{2}f^a - f^{l-1}
\Big\}
\Big]
\Big).
\end{eqnarray*}
\end{lem}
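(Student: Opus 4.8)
The plan is to prove the identity by a direct computation: clear denominators and check that the two sides agree as polynomials in the six real variables. Since $f^l$, $f^a$, $f^{l-1}$ are assumed non-zero, the asserted identity is equivalent to the one obtained by multiplying through by $2f^lf^af^{l-1}$. After this multiplication the left-hand side reads $G^lf^af^{l-1}+G^af^lf^{l-1}-2G^{l-1}f^lf^a$, and it remains to show that the right-hand side, multiplied by the same factor, reduces to exactly this expression.

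For the right-hand side I would treat its two summands separately. The first summand, $\tfrac{1}{f^{l-1}}(\tfrac12 G^l+\tfrac12 G^a-G^{l-1})$, contributes $f^lf^a(G^l+G^a-2G^{l-1})$ after multiplying by $2f^lf^af^{l-1}$. For the second summand, I would first simplify the inner bracket: expanding $(f^a-f^l)(f^{l-1}-f^l)-2f^l(\tfrac12 f^l+\tfrac12 f^a-f^{l-1})$ gives $f^af^{l-1}-2f^af^l+f^lf^{l-1}$, so the whole bracket equals $\tfrac12(G^l-G^a)(f^{l-1}-f^l)f^a+\tfrac12 G^a(f^af^{l-1}-2f^af^l+f^lf^{l-1})$; multiplying by $2$ and expanding the products into monomials yields $G^lf^af^{l-1}-G^lf^af^l-G^af^af^l+G^af^lf^{l-1}$. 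Adding the two contributions, the monomials $G^lf^lf^a$ and $G^af^lf^a$ coming from the first summand cancel against $-G^lf^af^l$ and $-G^af^af^l$ from the bracket, leaving precisely $G^lf^af^{l-1}+G^af^lf^{l-1}-2G^{l-1}f^lf^a$; dividing back by $2f^lf^af^{l-1}$ completes the argument.

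There is no genuine obstacle here: the argument is pure bookkeeping, and the only care needed is in tracking the roughly dozen monomials in $G^l,G^a,G^{l-1},f^l,f^a,f^{l-1}$ through the expansion, which is exactly why the authors choose to omit it. A slightly more transparent alternative I could present instead is to write $\tfrac{\frac12 G^l}{f^l}=\tfrac{\frac12 G^l}{f^{l-1}}+\tfrac12 G^l\tfrac{f^{l-1}-f^l}{f^lf^{l-1}}$ and likewise for the $G^a/f^a$ term; this immediately produces the leading term $\tfrac{1}{f^{l-1}}(\tfrac12 G^l+\tfrac12 G^a-G^{l-1})$ together with a remainder $\tfrac12 G^l\tfrac{f^{l-1}-f^l}{f^lf^{l-1}}+\tfrac12 G^a\tfrac{f^{l-1}-f^a}{f^af^{l-1}}$, and one then checks, again by clearing denominators, that this remainder coincides with the second summand on the right. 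The reason for stating the identity in this particular form is that in the applications $G^{\bullet}$ and $f^{\bullet}$ are (products of sums of) the weight numerators and normalizers, so the first summand encodes the genuine antithetic/telescoping difference $\tfrac12 G^l+\tfrac12 G^a-G^{l-1}$ while the second is a lower-order correction that can be bounded via Lemmas \ref{lem:cpf1}, \ref{lem:cpf2} and \ref{lem:cpf_weight}.
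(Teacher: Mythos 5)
Your verification is correct: clearing the denominator $2f^lf^af^{l-1}$, simplifying the inner bracket to $f^af^{l-1}-2f^af^l+f^lf^{l-1}$, and cancelling the monomials $G^lf^lf^a$ and $G^af^lf^a$ does reduce both sides to $G^lf^af^{l-1}+G^af^lf^{l-1}-2G^{l-1}f^lf^a$. The paper omits its proof precisely because it is this same tedious algebraic expansion, so your argument is essentially the intended one.
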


\begin{lem}\label{lem:cpf_weight1}
Assume (A\ref{ass:diff1}-\ref{ass:g}). Then for any $k\in\mathbb{N}$ there exists a $C<+\infty$ such that for any $(l,N_l)\in\mathbb{N}^2$
$$
\mathbb{E}\left[\mathbb{I}_{\mathsf{S}_{k-1}^l}(i)\left|\left\{\frac{\tfrac{1}{2}g_{k}(X_k^{i,l})}{\sum_{j=1}^{N_l}g_{k}(X_k^{j,l})}
+
\frac{\tfrac{1}{2}g_{k}(X_k^{i,l,a})}{\sum_{j=1}^{N_l}g_{k}(X_k^{j,l,a})}
-
\frac{g_{k}(X_k^{i,l-1})}{\sum_{j=1}^{N_l}g_{k}(X_k^{j,l-1})}\right\}
\right|\right] \leq 
$$
$$
C\left(\frac{\Delta_l}{N_l}+ 
\frac{1}{N_l}\left\{
1-\mathbb{E}\left[\frac{\textrm{\emph{Card}}(\mathsf{S}_{k-1}^l)}{N_l}\right]\right\}\right).
$$
\end{lem}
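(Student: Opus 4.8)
The plan is to exploit the algebraic identity of Lemma~\ref{lem:tech_lem_rat} with the substitutions $G^l = g_k(X_k^{i,l})$, $G^a = g_k(X_k^{i,l,a})$, $G^{l-1} = g_k(X_k^{i,l-1})$, and $f^l = \sum_{j=1}^{N_l} g_k(X_k^{j,l})$, $f^a = \sum_{j=1}^{N_l} g_k(X_k^{j,l,a})$, $f^{l-1} = \sum_{j=1}^{N_l} g_k(X_k^{j,l-1})$, all multiplied by the indicator $\mathbb{I}_{\mathsf{S}_{k-1}^l}(i)$. The denominators $f^l f^a f^{l-1}$ are each of order $N_l$ (bounded above and below by $C N_l$ using the uniform bounds on $g_k$ in (A\ref{ass:g})), so the prefactor $1/(f^a f^l f^{l-1})$ contributes a factor $\mathcal{O}(N_l^{-3})$ and the prefactor $1/f^{l-1}$ contributes $\mathcal{O}(N_l^{-1})$. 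The strategy is then to bound each of the three bracketed pieces on the right-hand side of the identity separately, in $\mathbb{L}_1$, against the quantities already controlled by Lemmata~\ref{lem:cpf1}, \ref{lem:cpf2}, \ref{lem:cpf_weight} and Remark~\ref{rem:cpf_weight}, together with the cardinality deficit $1 - \mathbb{E}[\mathrm{Card}(\mathsf{S}_{k-1}^l)/N_l]$.

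First I would handle the leading term $\tfrac{1}{f^{l-1}}(\tfrac12 G^l + \tfrac12 G^a - G^{l-1})$. Writing $\tfrac12 g_k(X_k^{i,l}) + \tfrac12 g_k(X_k^{i,l,a}) - g_k(X_k^{i,l-1})$ and using the second-order Taylor structure of $g_k$ (from (A\ref{ass:g}), $g_k \in \mathcal{C}_b^2$), this equals (schematically) a Hessian term applied to $\tfrac12(X_k^{i,l} - X_k^{i,l,a})^{\otimes 2}$ plus a gradient term applied to $\overline{X}_k^{i,l} - X_k^{i,l-1}$, where $\overline{X}_k^{i,l} = \tfrac12(X_k^{i,l} + X_k^{i,l,a})$. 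On $\mathsf{S}_{k-1}^l$, Lemma~\ref{lem:cpf1} (with $p=2$) bounds $\mathbb{E}[\|X_k^{i,l} - X_k^{i,l,a}\|^2]$ by $C\Delta_l$ and Lemma~\ref{lem:cpf2} (with $p=1$) bounds $\mathbb{E}[\|\overline{X}_k^{i,l} - X_k^{i,l-1}\|]$ by $C\Delta_l$; hence this whole term is $\mathcal{O}(\Delta_l / N_l)$, which matches the claimed bound. Here one must be slightly careful: Lemmata~\ref{lem:cpf1}--\ref{lem:cpf2} are stated as averages $\tfrac{1}{N_l}\sum_{i \in \mathsf{S}_{k-1}^l}$, so I would apply them to the summed version $\sum_{i \in \mathsf{S}_{k-1}^l}$ and divide by $f^{l-1} \asymp N_l$, or equivalently note that the per-particle bound holds in expectation by symmetry/exchangeability of the particle labels within $\mathsf{S}_{k-1}^l$.

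Next, the two correction terms. The term $\tfrac12 (G^l - G^a)(f^{l-1} - f^l) f^a / (f^a f^l f^{l-1})$: the factor $f^a/(f^a f^l f^{l-1})$ is $\mathcal{O}(N_l^{-2})$, $|G^l - G^a| \le C\|X_k^{i,l} - X_k^{i,l,a}\|$ by the Lipschitz property of $g_k$, and $|f^{l-1} - f^l| \le \sum_{j}|g_k(X_k^{j,l-1}) - g_k(X_k^{j,l})|$ which on $\mathsf{S}_{k-1}^l$ is handled by Lemma~\ref{lem:cpf1} and off $\mathsf{S}_{k-1}^l$ by a trivial $\mathcal{O}(N_l)$ bound contributing the cardinality deficit — here I would split $f^{l-1} - f^l = \sum_{j \in \mathsf{S}_{k-1}^l}(\cdots) + \sum_{j \notin \mathsf{S}_{k-1}^l}(\cdots)$, Cauchy--Schwarz the product of the two norms using $\mathbb{L}_2$ bounds $\mathcal{O}(\Delta_l^{1/2})$ each, giving $\mathcal{O}(\Delta_l)$ overall for the good part and the deficit term $\mathcal{O}(\{1 - \mathbb{E}[\mathrm{Card}(\mathsf{S}_{k-1}^l)/N_l]\})$ for the bad part, both divided by $N_l$ after accounting for the $N_l^{-2}$ from denominators times the $\mathcal{O}(N_l)$ size of the sum. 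The last term $\tfrac12 G^a[(f^a - f^l)(f^{l-1} - f^l) - 2f^l\{\tfrac12 f^l + \tfrac12 f^a - f^{l-1}\}]/(f^a f^l f^{l-1})$: $G^a$ is bounded, the $(f^a - f^l)(f^{l-1} - f^l)$ product is again handled by Cauchy--Schwarz giving $\mathcal{O}(\Delta_l \cdot N_l^2)$ plus deficit contributions of size $\mathcal{O}(N_l^2\{1 - \mathbb{E}[\mathrm{Card}/N_l]\})$ (two bad factors) and $\mathcal{O}(N_l^{3/2}\Delta_l^{1/4}\{\cdots\}^{1/2})$ cross terms (which one must show are dominated by the claimed $\Delta_l/N_l + \{1-\cdots\}/N_l$ after dividing by $N_l^3$ — this needs $\Delta_l^{1/4}\{\cdots\}^{1/2}/N_l^{3/2} \le \Delta_l/N_l + \{\cdots\}/N_l$ up to constants, which follows from Young's inequality and $\Delta_l \le 1$), and $f^l\{\tfrac12 f^l + \tfrac12 f^a - f^{l-1}\} = f^l \sum_j \{\tfrac12 g_k(X_k^{j,l}) + \tfrac12 g_k(X_k^{j,l,a}) - g_k(X_k^{j,l-1})\}$ which by the Taylor argument from the first paragraph (now summed) is $\mathcal{O}(N_l)\cdot(\mathcal{O}(N_l\Delta_l) + \text{deficit})$.

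The main obstacle I expect is bookkeeping the cardinality-deficit terms cleanly: every time a sum is split into an $\mathsf{S}_{k-1}^l$ part and its complement, the complement contributes a term proportional to $N_l - \mathrm{Card}(\mathsf{S}_{k-1}^l)$ (with bounded per-particle contributions, using the uniform bounds on $g_k$), and after dividing by the appropriate power of $N_l$ these must all collapse into the single $\tfrac{1}{N_l}\{1 - \mathbb{E}[\mathrm{Card}(\mathsf{S}_{k-1}^l)/N_l]\}$ term. One should also verify that products of two ``bad'' factors, which naively give $(1 - \mathbb{E}[\mathrm{Card}/N_l])^2$-type terms or cross terms with $\Delta_l$, are in fact dominated by the linear deficit term since the deficit is bounded by $1$ and $\Delta_l \le 1$, so $\{1-\cdots\}^2 \le \{1-\cdots\}$ and $\Delta_l^{1/2}\{1-\cdots\}^{1/2}\le \Delta_l + \{1-\cdots\}$ up to constants. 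Modulo this careful accounting and routine applications of Cauchy--Schwarz and Young's inequality, the result follows by assembling the three estimates.
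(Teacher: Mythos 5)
Your proposal follows essentially the same route as the paper: apply Lemma \ref{lem:tech_lem_rat} with exactly these substitutions, bound the leading antithetic term via the second-order expansion of $g_k$ (the paper cites \cite[Lemma 2.2]{ml_anti}) together with Lemmata \ref{lem:cpf1} and \ref{lem:cpf2} to get $\Delta_l/N_l$, and control the remaining terms by Lipschitz continuity, Cauchy--Schwarz and Lemma \ref{lem:cpf1} on $\mathsf{S}_{k-1}^l$, with the complement absorbed into the cardinality deficit. The only blemish is your cross-term exponent $\Delta_l^{1/4}$ (a correct Cauchy--Schwarz along your own lines gives $\Delta_l^{1/2}\{1-\mathbb{E}[\mathrm{Card}(\mathsf{S}_{k-1}^l)/N_l]\}^{1/2}/N_l$, which Young's inequality does dominate, whereas $\Delta_l^{1/4}$ would not); alternatively one can simply bound one factor by a constant, as the paper does, and avoid cross terms altogether.
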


\begin{proof}
One can apply Lemma \ref{lem:tech_lem_rat} to yield that 
$$
\frac{\tfrac{1}{2}g_{k}(X_k^{i,l})}{\sum_{j=1}^{N_l}g_{k}(X_k^{j,l})}
+
\frac{\tfrac{1}{2}g_{k}(X_k^{i,l,a})}{\sum_{j=1}^{N_l}g_{k}(X_k^{j,l,a})}
-
\frac{g_{k}(X_k^{i,l-1})}{\sum_{j=1}^{N_l}g_{k}(X_k^{j,l-1})} = \sum_{j=1}^4 T_j
$$
where
\begin{eqnarray*}
T_1 & = & \frac{1}{\sum_{j=1}^{N_l}g_{k}(X_k^{j,l-1})}\left\{\tfrac{1}{2}g_{k}(X_k^{i,l})+\tfrac{1}{2}g_{k}(X_k^{i,l,a})-
g_{k}(X_k^{i,l-1})
\right\} \\
T_2 & = & \frac{1}{2[\sum_{j=1}^{N_l}g_{k}(X_k^{j,l})]
[\sum_{j=1}^{N_l}g_{k}(X_k^{j,l-1})]
}\left(g_{k}(X_k^{i,l})-g_{k}(X_k^{i,l,a})\right)
\left(\sum_{j=1}^{N_l}[g_{k}(X_k^{j,l-1})-g_{k}(X_k^{j,l})]\right) \\
T_3 & = & \frac{g_{k}(X_k^{i,l,a})}{2[\sum_{j=1}^{N_l}g_{k}(X_k^{j,l,a})][\sum_{j=1}^{N_l}g_{k}(X_k^{j,l})]
[\sum_{j=1}^{N_l}g_{k}(X_k^{j,l-1})]
}
\left(\sum_{j=1}^{N_l}[g_{k}(X_k^{j,l,a})-g_{k}(X_k^{j,l})]\right)\times \\ & &
\left(\sum_{j=1}^{N_l}[g_{k}(X_k^{j,l-1})-g_{k}(X_k^{j,l})]\right)\\
T_4 & = & -\frac{g_{k}(X_k^{i,l,a})}{[\sum_{j=1}^{N_l}g_{k}(X_k^{j,l,a})]
[\sum_{j=1}^{N_l}g_{k}(X_k^{j,l-1})]
}\left\{
\sum_{j=1}^{N_l}[\tfrac{1}{2}g_{k}(X_k^{j,l}) + \tfrac{1}{2}g_{k}(X_k^{j,l,a}) -
g_{k}(X_k^{j,l-1})
].
\right\}
\end{eqnarray*}
The terms $T_1$ (resp.~$T_2$) and $T_4$ (resp.~$T_3$) can be dealt with in a similar manner, so we only consider $T_1$ (resp.~$T_2$).

For the case of $T_1$ we have the upper-bound:
$$
\mathbb{E}\left[\mathbb{I}_{\mathsf{S}_{k-1}^l}(i)T_1\right] \leq \frac{C}{N_l}\mathbb{E}\left[\mathbb{I}_{\mathsf{S}_{k-1}^l}(i)|\tfrac{1}{2}g_{k}(X_k^{i,l})+\tfrac{1}{2}g_{k}(X_k^{i,l,a})-
g_{k}(X_k^{i,l-1})|\right].
$$
Then one can apply \cite[Lemma 2.2]{ml_anti} along with Lemmata \ref{lem:cpf1} and \ref{lem:cpf2} to deduce that
$$
\mathbb{E}\left[\mathbb{I}_{\mathsf{S}_{k-1}^l}(i)T_1\right] \leq \frac{C\Delta_l}{N_l}.
$$
For the case of $T_2$ we have the upper-bound
$$
\mathbb{E}\left[\mathbb{I}_{\mathsf{S}_{k-1}^l}(i)T_2\right] 
\leq \frac{C}{N_l^2}\Bigg(
\mathbb{E}\left[\mathbb{I}_{\mathsf{S}_{k-1}^l}(i)|g_{k}(X_k^{i,l,a})-g_{k}(X_k^{i,l})|
\sum_{j\in\mathsf{S}_{k-1}^l}
[g_{k}(X_k^{j,l-1})-g_{k}(X_k^{j,l,a})]
\Big|\right]
+
$$
$$
\mathbb{E}\left[\mathbb{I}_{\mathsf{S}_{k-1}^l}(i)|g_{k}(X_k^{i,l,a})-g_{k}(X_k^{i,l})|
\Big(
N_l - \textrm{Card}(\mathsf{S}_{k-1}^l)
\Big)
\right]
\Bigg).
$$
For the first term on the R.H.S.~one can use the Lipschitz property of $g_{k}$, Cauchy-Schwarz and Lemma \ref{lem:cpf1} and for the second term, the boundedness of $g_k$ to yield that
$$
\mathbb{E}\left[\mathbb{I}_{\mathsf{S}_{k-1}^l}(i)T_2\right] \leq C\left(\frac{\Delta_l}{N_l}+ \frac{1}{N_l}\left\{1-\mathbb{E}\left[\frac{\textrm{Card}(\mathsf{S}_{k-1}^l)}{N_l}\right]\right\}\right)
$$
which concludes the proof.
\end{proof}

\begin{rem}\label{rem:cpf_weight1} 
As for the case of Lemma \ref{lem:cpf_weight} one can establish the following result.  Assume (A\ref{ass:diff1}-\ref{ass:g}). Then for any $k\in\mathbb{N}$ there exists a $C<+\infty$ such that for any $(l,N_l)\in\mathbb{N}^2$
$$
\mathbb{E}\left[\mathbb{I}_{\mathsf{S}_{k-1}^l}(i)\left|\left\{\frac{\tfrac{1}{2}g_{k}(X_k^{i,l})}{\sum_{j=1}^{N_l}g_{k}(X_k^{j,l})}
+
\frac{\tfrac{1}{2}g_{k}(X_k^{i,l,a})}{\sum_{j=1}^{N_l}g_{k}(X_k^{j,l,a})}
-
\frac{g_{k}(X_k^{i,l-1})}{\sum_{j=1}^{N_l}g_{k}(X_k^{j,l-1})}\right\}
\right|^2\right] \leq 
$$
$$
C\left(\frac{\Delta_l}{N_l^2}+ 
\frac{1}{N_l^2}\left\{
1-\mathbb{E}\left[\frac{\textrm{\emph{Card}}(\mathsf{S}_{k-1}^l)}{N_l}\right]\right\}\right).
$$
\end{rem}

\begin{lem}\label{lem:card_lem}
Assume (A\ref{ass:diff1}-\ref{ass:g}). Then for any $k\in\mathbb{N}_0$ there exists a $C<+\infty$ such that for any $(l,N_l,\varepsilon)\in\mathbb{N}^2\times(0,1/2)$
$$
1-\mathbb{E}\left[\frac{\textrm{\emph{Card}}(\mathsf{S}_{k}^l)}{N_l}\right] \leq C\left(\Delta_l + \frac{\Delta_l^{\tfrac{1}{2}-\varepsilon}}{N_l}\right)
$$
\end{lem}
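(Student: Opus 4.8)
The plan is to write $p_k:=1-\mathbb{E}\big[\mathrm{Card}(\mathsf{S}_k^l)/N_l\big]$ (suppressing the dependence on $l$ and $N_l$), to establish a one-step recursion in $k$, and then to iterate; since the time index $k$ in the statement is fixed, $C$ may depend on $k$, and the base case $k=0$ is immediate because $\mathsf{S}_0^l=\{1,\dots,N_l\}$ by convention, so $p_0=0$.

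For $k\ge 1$ I would first condition on $\mathcal{F}_k^l$, noting that $\mathsf{S}_{k-1}^l$ and the three weight vectors $W_k^{1:N_l,l}$, $\bar W_k^{1:N_l,l-1}$, $W_k^{1:N_l,l,a}$ are $\mathcal{F}_k^l$-measurable. In Algorithm~\ref{alg:max_coup}, a given index $i$ lands in $\mathsf{S}_k^l$ as soon as the ``common'' branch of Step~2 fires and the sampled ancestor lies in $\mathsf{S}_{k-1}^l$, and the common branch selects ancestor $j$ with probability exactly $\min\{W_k^{j,l},W_k^{j,l,a},\bar W_k^{j,l-1}\}$; hence
$$
\mathbb{P}\big(i\in\mathsf{S}_k^l\mid\mathcal{F}_k^l\big)\;\ge\;\sum_{j\in\mathsf{S}_{k-1}^l}\min\{W_k^{j,l},W_k^{j,l,a},\bar W_k^{j,l-1}\},
$$
and, summing over $i$ and taking expectations,
$$
p_k\;\le\;\mathbb{E}\Big[1-\sum_{j\in\mathsf{S}_{k-1}^l}\min\{W_k^{j,l},W_k^{j,l,a},\bar W_k^{j,l-1}\}\Big].
$$

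Next I would decompose the right-hand side. Using $\sum_{j=1}^{N_l}W_k^{j,l}=1$,
$$
1-\sum_{j\in\mathsf{S}_{k-1}^l}\min\{\cdots\}=\sum_{j\notin\mathsf{S}_{k-1}^l}W_k^{j,l}+\sum_{j\in\mathsf{S}_{k-1}^l}\Big(W_k^{j,l}-\min\{W_k^{j,l},W_k^{j,l,a},\bar W_k^{j,l-1}\}\Big).
$$
For the first sum, the two-sided bounds on $g_k$ in (A\ref{ass:g}) give $W_k^{j,l}\le C/N_l$, so its expectation is at most $C\,p_{k-1}$. For the summand of the second sum I would use the antithetic splitting $W_k^{j,l}-\bar W_k^{j,l-1}=\tfrac12(W_k^{j,l}-W_k^{j,l,a})+(\tfrac12 W_k^{j,l}+\tfrac12 W_k^{j,l,a}-\bar W_k^{j,l-1})$ to obtain
$$
W_k^{j,l}-\min\{W_k^{j,l},W_k^{j,l,a},\bar W_k^{j,l-1}\}\;\le\;\big|W_k^{j,l}-W_k^{j,l,a}\big|+\Big|\tfrac12 W_k^{j,l}+\tfrac12 W_k^{j,l,a}-\bar W_k^{j,l-1}\Big|,
$$
which is exactly the combination of quantities estimated, on $\{i\in\mathsf{S}_{k-1}^l\}$, by Lemma~\ref{lem:cpf_weight}/Remark~\ref{rem:cpf_weight} (fine versus antithetic-fine weights) and by Lemma~\ref{lem:cpf_weight1}/Remark~\ref{rem:cpf_weight1} (antithetic combination versus coarse). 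Summing these over the $N_l$ indices -- the antithetic-combination term directly at rate $\Delta_l/N_l$ per index, the fine/antithetic term more carefully (using the $\mathbb{L}_2$ bound of Remark~\ref{rem:cpf_weight} together with $\mathrm{Card}(\mathsf{S}_{k-1}^l)\le N_l$ and Cauchy--Schwarz, so as to land on $\Delta_l^{1/2-\varepsilon}/N_l$) -- and adding the $C\,p_{k-1}$ coming from the first sum, one arrives at a recursion $p_k\le C\big(\Delta_l+\Delta_l^{1/2-\varepsilon}/N_l\big)+C\,p_{k-1}$, which, iterated from $p_0=0$ over the fixed horizon, gives the claim.

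The main obstacle is precisely the control of $\sum_{j\in\mathsf{S}_{k-1}^l}\big|W_k^{j,l}-W_k^{j,l,a}\big|$. The fine and antithetic-fine particles separate at the strong order $\Delta_l^{1/2}$ (Lemma~\ref{lem:cpf1}), so a crude triangle-inequality treatment of the discrepancy only yields a per-step decoupling of order $\Delta_l^{1/2}$, which is too weak for the stated bound. Recovering $\Delta_l^{1/2-\varepsilon}/N_l$ requires genuinely exploiting the antithetic structure -- that this difference enters the normalized weights only through sums whose denominators concentrate -- and striking the right H\"older/Cauchy--Schwarz balance, the $\varepsilon$ being the slack thereby incurred (ultimately inherited from the truncated Milstein estimates under (A\ref{ass:diff1})). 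A secondary point to check is that the coefficient of $p_{k-1}$ in the recursion stays bounded uniformly in $l$ and $N_l$ -- this is where the lower bound on $g_k$ is essential -- so that iterating does not introduce an $l$- or $N_l$-dependent blow-up.
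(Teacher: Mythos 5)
Your overall architecture matches the paper's: the recursion in $k$ with the trivial base case $\mathsf{S}_0^l=\{1,\dots,N_l\}$, the observation that the common branch of Algorithm \ref{alg:max_coup} fires for ancestor $j$ with probability $\min\{W_k^{j,l},W_k^{j,l,a},\bar W_k^{j,l-1}\}$, the splitting off of indices outside $\mathsf{S}_{k-1}^l$ via the two-sided bounds on $g_k$ (this is exactly \eqref{eq:card_lem1}), and the use of Lemma \ref{lem:cpf_weight1} for the antithetic-average-versus-coarse weight term. The gap is at the step you yourself flag as the main obstacle, and your proposed fix does not close it. Once you majorize $W_k^{j,l}-\min\{W_k^{j,l},W_k^{j,l,a},\bar W_k^{j,l-1}\}\le|W_k^{j,l}-W_k^{j,l,a}|+|\tfrac12W_k^{j,l}+\tfrac12W_k^{j,l,a}-\bar W_k^{j,l-1}|$ you are committed to bounding $\mathbb{E}[\sum_{j\in\mathsf{S}_{k-1}^l}|W_k^{j,l}-W_k^{j,l,a}|]$ in full, and this quantity is genuinely of order $\Delta_l^{1/2}$: the fine and antithetic-fine particles separate at strong order $\Delta_l^{1/2}$ (Lemma \ref{lem:cpf1}), the absolute value kills any sign cancellation, and the $\mathbb{L}_1$ rate $\Delta_l^{1/2}/N_l$ per index in Lemma \ref{lem:cpf_weight} is sharp in order. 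No H\"older/Cauchy--Schwarz rebalancing of that sum alone can therefore produce $\Delta_l+\Delta_l^{1/2-\varepsilon}/N_l$; concretely, the step you suggest (per-index Cauchy--Schwarz with the $\mathbb{L}_2$ bound of Remark \ref{rem:cpf_weight} and $\textrm{Card}(\mathsf{S}_{k-1}^l)\le N_l$) gives $(\Delta_l^{1/2}/N_l^2)^{1/2}=\Delta_l^{1/4}/N_l$ per index, i.e.\ $\Delta_l^{1/4}$ after summing, which is worse than the trivial bound, not better.

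The paper escapes this by two ingredients that are absent from your plan. First, it never discards the min through a triangle-inequality majorant: applying $\min\{a,b\}=\tfrac12(a+b-|a-b|)$ twice gives the exact identity $1-\sum_i\min\{\cdots\}=\tfrac12\sum_i(\alpha_i+|\beta_i-\alpha_i|)$ with $\alpha_i=\tfrac12|W_k^{i,l}-W_k^{i,l,a}|$ and $\beta_i$ the antithetic-minus-coarse combination, so that after a case analysis the $\alpha_i$ contribution has to be estimated only on the event $\{\alpha_i\ge|\beta_i|\}$ (in all other configurations the summand collapses to a multiple of $|\beta_i|$, which Lemma \ref{lem:cpf_weight1} handles at rate $\Delta_l/N_l$). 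Second, the argument is a two-stage bootstrap: a first induction establishes the crude bound $1-\mathbb{E}[\textrm{Card}(\mathsf{S}_k^l)/N_l]\le C\Delta_l^{1/2}$, which is then fed into Remarks \ref{rem:cpf_weight} and \ref{rem:cpf_weight1} to remove the unknown $\textrm{Card}$-term and yield the clean second-moment bounds \eqref{eq:card_lem4}--\eqref{eq:card_lem5} of order $\Delta_l^{1/2}/N_l^2$; only with these in hand is the restricted term $\mathbb{E}[\alpha_i\mathbb{I}_{\{\alpha_i\ge|\beta_i|\}}\mathbb{I}_{\mathsf{S}_{k-1}^l}(i)]$ treated, by splitting on the set $\mathsf{A}_\varepsilon$ and paying a factor $\Delta_l^{-\varepsilon}$ either through $\alpha_i\le\Delta_l^{-\varepsilon}\alpha_i^2$ or through Cauchy--Schwarz against $\beta_i^2\Delta_l^{-2\varepsilon}$ -- this is precisely where the $\varepsilon$ in the statement originates. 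Note also that your single-pass induction could not even invoke Remarks \ref{rem:cpf_weight} and \ref{rem:cpf_weight1} cleanly, since as stated they still contain $1-\mathbb{E}[\textrm{Card}(\mathsf{S}_{k-1}^l)/N_l]$. So you have correctly located the difficulty, but the exact-min case analysis and the preliminary $\Delta_l^{1/2}$ bootstrap are essential missing ideas, and without them the proposed route stalls at order $\Delta_l^{1/2}$.
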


\begin{proof}
We note that
\begin{eqnarray}
1-\mathbb{E}\left[\frac{\textrm{Card}(\mathsf{S}_{k}^l)}{N_l}\right] & = & 
1 - \mathbb{E}\left[\sum_{i=1}^{N_l} \min\left\{\frac{g_k(X_k^{i,l})}{\sum_{j=1}^{N_l}g_k(X_k^{j,l})}
\frac{g_k(X_k^{i,l,a})}{\sum_{j=1}^{N_l}g_k(X_k^{j,l,a})},
\frac{g_k(X_k^{i,l-1})}{\sum_{j=1}^{N_l}g_k(X_k^{j,l-1})}
\right\}\right] + \nonumber\\ & &
 \mathbb{E}\left[\sum_{i\notin\mathsf{S}_{k-1}^l} 
\min\left\{\frac{g_k(X_k^{i,l})}{\sum_{j=1}^{N_l}g_k(X_k^{j,l})}
\frac{g_k(X_k^{i,l,a})}{\sum_{j=1}^{N_l}g_k(X_k^{j,l,a})},
\frac{g_k(X_k^{i,l-1})}{\sum_{j=1}^{N_l}g_k(X_k^{j,l-1})}
\right\}\right]\nonumber\\
& \leq &
1 - \mathbb{E}\left[\sum_{i=1}^{N_l} \min\left\{\frac{g_k(X_k^{i,l})}{\sum_{j=1}^{N_l}g_k(X_k^{j,l})}
\frac{g_k(X_k^{i,l,a})}{\sum_{j=1}^{N_l}g_k(X_k^{j,l,a})},
\frac{g_k(X_k^{i,l-1})}{\sum_{j=1}^{N_l}g_k(X_k^{j,l-1})}
\right\}\right] + \nonumber\\ & &C\left(
1-\mathbb{E}\left[\frac{\textrm{Card}(\mathsf{S}_{k-1}^l)}{N_l}\right]
\right).\label{eq:card_lem1}
\end{eqnarray}
As our proof strategy is one by induction (the initialization is trivally true as $\mathsf{S}_{0}^l=\{1,\dots,N_l\}$), then we need to focus upon the first term on the R.H.S.~of
\eqref{eq:card_lem1}.

Now using the result that for any $(a,b)\in\mathbb{R}^2$, $\min\{a,b\} = \tfrac{1}{2}(a+b-|a-b|)$, twice,  we easily obtain that 
$$
1 - \sum_{i=1}^{N_l} \min\left\{\frac{g_k(X_k^{i,l})}{\sum_{j=1}^{N_l}g_k(X_k^{j,l})}
\frac{g_k(X_k^{i,l,a})}{\sum_{j=1}^{N_l}g_k(X_k^{j,l,a})},
\frac{g_k(X_k^{i,l-1})}{\sum_{j=1}^{N_l}g_k(X_k^{j,l-1})}
\right\}\ = 
$$
$$
\frac{1}{2}\sum_{i=1}^{N_l}
\Bigg\{
\frac{1}{2}\Bigg|
\frac{g_k(X_k^{i,l})}{\sum_{j=1}^{N_l}g_k(X_k^{j,l})} - \frac{g_k(X_k^{i,l,a})}{\sum_{j=1}^{N_l}g_k(X_k^{j,l,a})}
\Bigg| + 
\Bigg|
\frac{\tfrac{1}{2}g_k(X_k^{i,l})}{\sum_{j=1}^{N_l}g_k(X_k^{j,l})} +
\frac{\tfrac{1}{2}g_k(X_k^{i,l,a})}{\sum_{j=1}^{N_l}g_k(X_k^{j,l,a})}
-
\frac{g_k(X_k^{i,l-1})}{\sum_{j=1}^{N_l}g_k(X_k^{j,l-1})}
-
$$
$$
\frac{1}{2}\Bigg|
\frac{g_k(X_k^{i,l})}{\sum_{j=1}^{N_l}g_k(X_k^{j,l})} - \frac{g_k(X_k^{i,l,a})}{\sum_{j=1}^{N_l}g_k(X_k^{j,l,a})}
\Bigg|
\Bigg|
\Bigg\}.
$$
To shorten the subsequent notations, we set
\begin{eqnarray*}
\alpha_i & = & \frac{1}{2}\Bigg|
\frac{g_k(X_k^{i,l})}{\sum_{j=1}^{N_l}g_k(X_k^{j,l})} - \frac{g_k(X_k^{i,l,a})}{\sum_{j=1}^{N_l}g_k(X_k^{j,l,a})}
\Bigg| \\
\beta_i & = & \frac{\tfrac{1}{2}g_k(X_k^{i,l})}{\sum_{j=1}^{N_l}g_k(X_k^{j,l})} +
\frac{\tfrac{1}{2}g_k(X_k^{i,l,a})}{\sum_{j=1}^{N_l}g_k(X_k^{j,l,a})}
-
\frac{g_k(X_k^{i,l-1})}{\sum_{j=1}^{N_l}g_k(X_k^{j,l-1})}.
\end{eqnarray*}
Then in this notation we have that
$$
1 - \mathbb{E}\left[\sum_{i=1}^{N_l} \min\left\{\frac{g_k(X_k^{i,l})}{\sum_{j=1}^{N_l}g_k(X_k^{j,l})}
\frac{g_k(X_k^{i,l,a})}{\sum_{j=1}^{N_l}g_k(X_k^{j,l,a})},
\frac{g_k(X_k^{i,l-1})}{\sum_{j=1}^{N_l}g_k(X_k^{j,l-1})}
\right\}\right]  =  \frac{1}{2}\mathbb{E}\left[\sum_{i=1}^{N_l}\{\alpha_i + |\beta_i-\alpha_i|\}\right]
$$
\begin{equation}
 \leq  \frac{1}{2}\mathbb{E}\left[\sum_{i\in\mathsf{S}_{k-1}^l}\{\alpha_i + |\beta_i-\alpha_i|\}\right]
+ C\left(
1-\mathbb{E}\left[\frac{\textrm{Card}(\mathsf{S}_{k-1}^l)}{N_l}\right]
\right).\label{eq:card_lem2}
\end{equation}
Then by induction, via Lemmata \ref{lem:cpf_weight} and \ref{lem:cpf_weight1} it trivally follows that 
$$
1-\mathbb{E}\left[\frac{\textrm{Card}(\mathsf{S}_{k}^l)}{N_l}\right] \leq C\Delta_l^{1/2}.
$$
Now although this result is not the one we want to prove,  one then obtains from Remarks \ref{rem:cpf_weight} and \ref{rem:cpf_weight1} the following 
\begin{eqnarray}
\mathbb{E}\left[\mathbb{I}_{\mathsf{S}_{k-1}^l}(i)\left|\left\{\frac{g_{k}(X_k^{i,l})}{\sum_{j=1}^{N_l}g_{k}(X_k^{j,l})}-
\frac{g_{k}(X_k^{i,l,a})}{\sum_{j=1}^{N_l}g_{k}(X_k^{j,l,a})}\right\}
\right|^2\right] & \leq & \frac{C\Delta_l^{1/2}}{N_l^2} \label{eq:card_lem4}\\
\mathbb{E}\left[\mathbb{I}_{\mathsf{S}_{k-1}^l}(i)\left|\left\{\frac{\tfrac{1}{2}g_{k}(X_k^{i,l})}{\sum_{j=1}^{N_l}g_{k}(X_k^{j,l})}
+
\frac{\tfrac{1}{2}g_{k}(X_k^{i,l,a})}{\sum_{j=1}^{N_l}g_{k}(X_k^{j,l,a})}
-
\frac{g_{k}(X_k^{i,l-1})}{\sum_{j=1}^{N_l}g_{k}(X_k^{j,l-1})}\right\}
\right|^2\right] & \leq & \frac{C\Delta_l^{1/2}}{N_l^2}.\label{eq:card_lem5}
\end{eqnarray}

Now in order to conclude the proof, we have the upper-bound
$$
1-\mathbb{E}\left[\frac{\textrm{Card}(\mathsf{S}_{k}^l)}{N_l}\right]
 \leq  \frac{1}{2}\mathbb{E}\left[\sum_{i\in\mathsf{S}_{k-1}^l}\{\alpha_i + |\beta_i-\alpha_i|\}\right]
+ C\left(
1-\mathbb{E}\left[\frac{\textrm{Card}(\mathsf{S}_{k-1}^l)}{N_l}\right]
\right)
$$
from equations \eqref{eq:card_lem1} and \eqref{eq:card_lem2}.  If $\beta_i\geq\alpha_i$ or $\alpha_i\leq|\beta_i|$
then in the summand on the R.H.S.~we get $|\beta_i|$ and from Lemma \ref{lem:cpf_weight1} these terms are (the expectation thereof)
\begin{equation}\label{eq:card_lem3}
\mathcal{O}\left(
\frac{\Delta_l}{N_l} + \frac{1}{N_l}\left(1-\mathbb{E}\left[\frac{\textrm{Card}(\mathsf{S}_{k-1}^l)}{N_l}\right]\right)
\right).
\end{equation}
which are particularly useful. The more problematic case is when $\beta_i\leq\alpha_i$ and $\alpha_i\geq|\beta_i|$
which is the one we now focus upon.  Consider the set $\mathsf{A}_{\varepsilon}=\{|\beta_i|>\Delta_l^{\varepsilon}\}$
then our objective is to bound
\begin{equation}\label{eq:extra1}
\mathbb{E}\left[\alpha_i\mathbb{I}_{\mathsf{S}_{k-1}^l}(i)\mathbb{I}_{\{\alpha_i\geq|\beta_i|\}}\right] = 
\mathbb{E}\left[\alpha_i\mathbb{I}_{\mathsf{S}_{k-1}^l}(i)\mathbb{I}_{\{\alpha_i\geq|\beta_i|\}\cap \mathsf{A}_{\varepsilon}^c}
\right] +
\mathbb{E}\left[\alpha_i\mathbb{I}_{\mathsf{S}_{k-1}^l}(i)\mathbb{I}_{\{\alpha_i\geq|\beta_i|\}\cap \mathsf{A}_{\varepsilon}}
\right]
\end{equation}
as all other terms are the order as in \eqref{eq:card_lem3}. For the first term on the R.H.S.~of \eqref{eq:extra1} we must have that
$\alpha_i\geq \Delta_l^{\varepsilon}$ and so it follows that
$$
\mathbb{E}\left[\alpha_i\mathbb{I}_{\mathsf{S}_{k-1}^l}(i)\mathbb{I}_{\{\alpha_i\geq|\beta_i|\}\cap \mathsf{A}_{\varepsilon}^c}
\right] \leq \Delta_l^{-\epsilon}\mathbb{E}\left[\alpha_i^2\mathbb{I}_{\mathsf{S}_{k-1}^l}(i)\right] \leq
\frac{C\Delta_l^{\tfrac{1}{2}-\epsilon}}{N_l^2}
$$
via \eqref{eq:card_lem4}. For the second term on the R.H.S.~of \eqref{eq:extra1}, via Cauchy Schwarz we have
$$
\mathbb{E}\left[\alpha_i\mathbb{I}_{\mathsf{S}_{k-1}^l}(i)\mathbb{I}_{\{\alpha_i\geq|\beta_i|\}\cap \mathsf{A}_{\varepsilon}}
\right]
\leq \mathbb{E}\left[\alpha_i^2\mathbb{I}_{\mathsf{S}_{k-1}^l}(i)\right]^{1/2}
\mathbb{E}\left[\mathbb{I}_{\mathsf{A}_{\varepsilon}}\mathbb{I}_{\mathsf{S}_{k-1}^l}(i)\right]^{1/2}
$$
then using that $\beta_i^2> \Delta_l^{2\varepsilon}$ on $\mathsf{A}_{\varepsilon}$, we have
$$
\mathbb{E}\left[\alpha_i\mathbb{I}_{\mathsf{S}_{k-1}^l}(i)\mathbb{I}_{\{\alpha_i\geq|\beta_i|\}\cap \mathsf{A}_{\varepsilon}}
\right]
\leq \mathbb{E}\left[\alpha_i^2\mathbb{I}_{\mathsf{S}_{k-1}^l}(i)\right]^{1/2}
\mathbb{E}\left[\beta_i^2\mathbb{I}_{\mathsf{S}_{k-1}^l}(i)\right]^{1/2}\Delta_l^{-\varepsilon}
$$
and application of \eqref{eq:card_lem4}-\eqref{eq:card_lem5} gives that
$$
\mathbb{E}\left[\alpha_i\mathbb{I}_{\mathsf{S}_{k-1}^l}(i)\mathbb{I}_{\{\alpha_i\geq|\beta_i|\}\cap \mathsf{A}_{\varepsilon}}
\right] \leq 
\frac{C\Delta_l^{\tfrac{1}{2}-\epsilon}}{N_l^2}.
$$
Hence, we conclude that
$$
1-\mathbb{E}\left[\frac{\textrm{Card}(\mathsf{S}_{k}^l)}{N_l}\right]
 \leq C\left(\Delta_l + \frac{\Delta_l^{\tfrac{1}{2}-\varepsilon}}{N_l}
+1-\mathbb{E}\left[\frac{\textrm{Card}(\mathsf{S}_{k-1}^l)}{N_l}\right]
\right)
$$
and the proof follows by induction.
\end{proof}

\begin{lem}\label{lem:avg_part}
Assume (A\ref{ass:diff1}-\ref{ass:g}). Then for any $k\in\mathbb{N}$ there exists a $C<+\infty$ such that for any $(l,N_l,\varepsilon,i)\in\mathbb{N}^2\times(0,1/2)\times\{1,\dots,N_l\}$
$$
\mathbb{E}\left[\min\{\|\overline{X}_k^{i,l}-X_k^{i,l-1}\|^p,1\}\right] \leq C\left(\Delta_l
+ \frac{\Delta_l^{\tfrac{1}{2}-\varepsilon}}{N_l}
\right).
$$
\end{lem}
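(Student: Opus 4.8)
The plan is to reduce this per-particle estimate to an average over all $N_l$ particles and then split according to whether a particle lies in the common-ancestor set $\mathsf{S}_{k-1}^l$. Since Algorithm \ref{alg:coup_pf} generates the coupled triples $(X_k^{i,l},X_k^{i,l-1},X_k^{i,l,a})$, $i\in\{1,\dots,N_l\}$, exchangeably in $i$, the quantity $\mathbb{E}\left[\min\{\|\overline{X}_k^{i,l}-X_k^{i,l-1}\|^p,1\}\right]$ does not depend on $i$, so
$$
\mathbb{E}\left[\min\{\|\overline{X}_k^{i,l}-X_k^{i,l-1}\|^p,1\}\right]
= \mathbb{E}\left[\frac{1}{N_l}\sum_{i=1}^{N_l}\min\{\|\overline{X}_k^{i,l}-X_k^{i,l-1}\|^p,1\}\right].
$$
I would then split the inner sum into the indices $i\in\mathsf{S}_{k-1}^l$ and $i\notin\mathsf{S}_{k-1}^l$ and estimate the two contributions separately.

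For the part over $i\in\mathsf{S}_{k-1}^l$ I would simply discard the truncation via $\min\{a,1\}\le a$ and apply Lemma \ref{lem:cpf2}, which gives a bound of order $\Delta_l^{p}$, hence of order $\Delta_l$ once $p\ge 1$ (as in the preceding lemmata) and $\Delta_l\le 1$ are used. For the part over $i\notin\mathsf{S}_{k-1}^l$ the truncation is essential, since there is no pathwise control of $\|\overline{X}_k^{i,l}-X_k^{i,l-1}\|$ for decoupled indices; bounding $\min\{a,1\}\le 1$ leaves
$$
\mathbb{E}\left[\frac{N_l-\textrm{Card}(\mathsf{S}_{k-1}^l)}{N_l}\right]
= 1-\mathbb{E}\left[\frac{\textrm{Card}(\mathsf{S}_{k-1}^l)}{N_l}\right],
$$
which is controlled by Lemma \ref{lem:card_lem} applied at time $k-1$ (legitimate, since that result is stated for all indices in $\mathbb{N}_0$) by $C\big(\Delta_l+\Delta_l^{1/2-\varepsilon}/N_l\big)$. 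Adding the two contributions yields the claimed bound, with the leading $\Delta_l$ coming from the well-coupled particles and the $\Delta_l^{1/2-\varepsilon}/N_l$ term from the probability that a given particle has become decoupled by time $k-1$.

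There is essentially no genuine obstacle: the lemma is an easy corollary of the earlier results. The only points requiring care are (i) the exchangeability step, which is what makes it legitimate to replace the single-index expectation by the empirical average on which Lemmata \ref{lem:cpf2} and \ref{lem:card_lem} operate, and (ii) keeping the time-indexing consistent, i.e.\ that $X_k^{i,l}$, $X_k^{i,l-1}$ (and $X_k^{i,l,a}$) are produced by the sampling step following the resampling at time $k-1$, so the relevant common-ancestor set is $\mathsf{S}_{k-1}^l$ and Lemma \ref{lem:card_lem} is invoked at time $k-1$.
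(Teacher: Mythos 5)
Your proposal is correct and follows essentially the same route as the paper, whose proof simply cites Lemma \ref{lem:cpf2} for the coupled indices and Lemma \ref{lem:card_lem} for the complement of $\mathsf{S}_{k-1}^l$ (mirroring the argument of Theorem D.5 in \cite{mlpf}); your write-up just makes the exchangeability and truncation steps explicit.
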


\begin{proof}
Follows from Lemmata \ref{lem:cpf2} and \ref{lem:card_lem}; see e.g.~the same proof in \cite[Theorem D.5]{mlpf}.
\end{proof}

\begin{rem}\label{rem:larate}
In a similar manner to the proofs in Lemma \ref{lem:avg_part} and \cite[Theorem D.5]{mlpf}, one can establish the following.
Assume (A\ref{ass:diff1}-\ref{ass:g}):
\begin{itemize}
\item{For any $k\in\mathbb{N}$ there exists a $C<+\infty$ such that for any $(l,N_l,\varepsilon,i)\in\mathbb{N}^2\times(0,1/2)\times\{1,\dots,N_l\}$
$$
\mathbb{E}\left[\min\{\|X_k^{i,l}-X_k^{i,l-1}\|^p,1\}\right] \leq C\left(\Delta_l^{1/2}
+ \frac{\Delta_l^{\tfrac{1}{2}-\varepsilon}}{N_l}
\right).
$$
}
\item{For any $k\in\mathbb{N}$ there exists a $C<+\infty$ such that for any $(l,N_l,\varepsilon,i)\in\mathbb{N}^2\times(0,1/2)\times\{1,\dots,N_l\}$
$$
\mathbb{E}\left[\min\{\|X_k^{i,l}-X_k^{i,l,a}\|^p,1\}\right] \leq C\left(\Delta_l^{1/2}
+ \frac{\Delta_l^{\tfrac{1}{2}-\varepsilon}}{N_l}
\right).
$$
}
\end{itemize}
\end{rem}

\subsection{Particle Convergence Proofs: $\mathbb{L}_p-$Bounds}\label{app:part_lp}

For a function $\varphi\in\mathcal{B}_b(\mathsf{X})\cap\textrm{Lip}(\mathsf{X})$, we set
$\overline{\|\varphi\|}=\max\{\|\varphi\|_{\infty},\|\varphi\|_{\textrm{Lip}}\}$, where $\|\cdot\|_{\textrm{Lip}}$ is the Lipschitz constant.
We denote the $N_l-$empirical measures: $\eta_k^{N_l,l}(dx)= \tfrac{1}{N}\sum_{i=1}^{N_l}\delta_{\{X_k^{i,l}\}}(dx)$, 
$\eta_k^{N_l,l-1}(dx)= \tfrac{1}{N}\sum_{i=1}^{N_l}\delta_{\{X_k^{i,l-1}\}}(dx)$, $\eta_k^{N_l,l,a}(dx)= \tfrac{1}{N}\sum_{i=1}^{N_l}\delta_{\{X_k^{i,l,a}\}}(dx)$ where $k\in\mathbb{N}$ and $\delta_A(dx)$ is the Dirac mass on a set $A$. We define the predictors for $l\in\mathbb{N}_0$, $\eta_1^{l}(dx) = P^l(x_0,dx)$ and for $k\geq 2$, $\eta_k^{l}(dx) =\eta_{k-1}^l(g_{k-1}P^l(dx))/\eta_{k-1}^l(g_{k-1})$.

\begin{lem}\label{lem:ml_conv1}
Assume (A\ref{ass:diff1}-\ref{ass:g}). Then for any $(k,p)\in\mathbb{N}\times[1,\infty)$ there exists a $C<+\infty$ such that for any $(l,N_l,\varepsilon,\varphi)\in\mathbb{N}^2\times(0,1/2)\times\mathcal{B}_b(\mathsf{X})\cap\textrm{\emph{Lip}}(\mathsf{X})$
$$
\mathbb{E}\left[|[\eta_k^{N_l,l}-\eta_k^{N_l,l-1}](\varphi)-[\eta_k^l-\eta_k^{l-1}](\varphi)|^p\right] \leq C\overline{\|\varphi\|}^p\left(\frac{ \Delta_l^{\tfrac{1}{2}}}{N_l^{\tfrac{p}{2}}}
+ \frac{\Delta_l^{\tfrac{1}{2}-\varepsilon}}{N_l^{\tfrac{p}{2}+1}}
\right).
$$
\end{lem}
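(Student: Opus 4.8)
The plan is to decompose the difference $[\eta_k^{N_l,l}-\eta_k^{N_l,l-1}](\varphi)-[\eta_k^l-\eta_k^{l-1}](\varphi)$ into a martingale-type (local) fluctuation part plus a contribution that is propagated from the previous time step, and then argue by induction on $k$. Writing the empirical selection/mutation recursion in Feynman--Kac form, at time $k$ one has $\eta_k^{N_l,l} = \Phi_k(\widehat{\eta}_{k-1}^{N_l,l})$ where $\widehat{\eta}_{k-1}^{N_l,l}$ is the post-resampling measure and $\Phi_k$ is the (deterministic) one-step prediction map; the same holds at level $l-1$. The standard trick (as in \cite{beskos}, \cite{mlpf}) is to insert and subtract $\Phi_k(\widehat{\eta}_{k-1}^{N_l,l})$-type terms so that
$$
[\eta_k^{N_l,l}-\eta_k^{N_l,l-1}](\varphi)-[\eta_k^l-\eta_k^{l-1}](\varphi) = A_k + B_k,
$$
where $A_k$ collects the conditionally-centred sampling errors at time $k$ given $\widehat{\mathcal{F}}_{k-1}^l$, and $B_k$ is a difference of the same quantity at time $k-1$ passed through the Feynman--Kac maps. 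The $B_k$ term is handled by the inductive hypothesis together with Lipschitz/boundedness stability of $\Phi_k$ under (A\ref{ass:g}), which contracts (or at worst preserves up to a constant) the $\mathbb{L}_p$ bound; the novelty is entirely in bounding $A_k$.

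For $A_k$ I would split the particle index set according to whether $i\in\mathsf{S}_{k-1}^l$ (the coalesced particles that share a common ancestor across all three systems) or not. On $\mathsf{S}_{k-1}^l$ the increments $X_k^{i,l}-X_k^{i,l-1}$ (and the antithetic pair) are genuinely small and one can use the Burkholder--Davis--Gundy / Marcinkiewicz--Zygmund inequality for the sum of conditionally independent, conditionally mean-zero terms to extract the $N_l^{-p/2}$ factor, with the per-particle $\mathbb{L}_p$ smallness coming from Lemma \ref{lem:cpf1} and Remark \ref{rem:larate}, giving a $\Delta_l^{1/2}$-type contribution (here we only get the half power, not $\Delta_l$, since we are differencing the two finest laws rather than using the antithetic average — contrast Lemma \ref{lem:cpf2}). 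On the complement, $i\notin\mathsf{S}_{k-1}^l$, we lose control of the pathwise distance but we may bound the summand crudely by $\|\varphi\|_\infty$, and the expected cardinality of that set is controlled by Lemma \ref{lem:card_lem}, which supplies $\Delta_l + \Delta_l^{1/2-\varepsilon}/N_l$; combined with the $N_l^{-1}$ from the normalisation this yields the $\Delta_l^{1/2-\varepsilon}/N_l^{p/2+1}$-type term in the statement (one must also use a conditional-independence/martingale bound across the $i\notin\mathsf{S}$ indices so that the crude bound is still improved by a power of $N_l$). One also needs to deal with the random, data-dependent normalising denominators $\sum_j g_k(X_k^{j,\cdot})$: since $g_k$ is bounded above and below by (A\ref{ass:g}), the normalisations differ across the three systems only through the coalescence structure, and Lemma \ref{lem:cpf_weight}, Lemma \ref{lem:cpf_weight1} and their squared versions \eqref{eq:card_lem4}--\eqref{eq:card_lem5} are exactly tailored to absorb those ratio discrepancies into the same two error terms.

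The main obstacle I expect is the bookkeeping for the non-coalesced indices: getting the extra $1/N_l$ saving on top of the crude $\|\varphi\|_\infty$ bound requires recognising that, conditionally on $\widehat{\mathcal{F}}_{k-1}^l$, the error still has a sum-of-conditionally-independent structure so that a variance (or $\mathbb{L}_p$) bound applies and one picks up $N_l^{-p/2}$ rather than $O(1)$, and then multiplying by the $\mathbb{L}_1$ bound on $\mathrm{Card}((\mathsf{S}_{k-1}^l)^c)/N_l$ from Lemma \ref{lem:card_lem} — keeping the $\varepsilon$ and the powers of $N_l$ straight through the induction is the delicate part. Everything else (the contraction of $B_k$, the BDG step for $A_k$ on $\mathsf{S}_{k-1}^l$, the handling of the weights) follows the now-standard pattern of \cite{mlpf} with three coupled systems in place of two, and the base case $k=1$ follows directly from Lemma \ref{lem:cpf1}, Remark \ref{rem:larate} and Lemma \ref{lem:card_lem} with no ancestral history to track.
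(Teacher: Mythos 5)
Your proposal follows essentially the same route as the paper: induction on $k$, adding and subtracting the conditional expectation given the time-$(k-1)$ information, a conditional Marcinkiewicz--Zygmund bound for the fluctuation term whose per-particle smallness is supplied by the coupling rates together with the coalescence-cardinality control (i.e.\ the content of Lemma \ref{lem:cpf1}, Lemma \ref{lem:card_lem} and Remark \ref{rem:larate}), and a treatment of the propagated term in the style of \cite{mlpf}. The only point your sketch compresses too much is that propagated term $B_k$: because the kernels $P^l$ and $P^{l-1}$ differ, ``stability plus the induction hypothesis'' does not apply directly — one must insert the exact kernel $P$, control the $[P^l-P]$ and $[P^{l-1}-P]$ pieces by first-order weak error combined with standard particle-filter $\mathbb{L}_p$ rates, expand the ratios via \cite[Lemma C.5]{mlpf}, and apply the induction hypothesis to the common test function $g_{k-1}P(\varphi)$ (which remains bounded and Lipschitz), which is exactly what the paper does in its terms $T_3$--$T_{13}$; the weight-coupling Lemmata \ref{lem:cpf_weight}--\ref{lem:cpf_weight1} you invoke are not needed here, as they enter only through Lemma \ref{lem:card_lem}.
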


\begin{proof}
The proof is by induction, with initialization following easily by the Marcinkiewicz Zygmund (M-Z) inequality (the case $p\in[1,2)$ can be dealt with using the bound $p\in[2,\infty)$ and Jensen) and strong convergence results for Euler discretizations. As a result, we proceed to the induction step. We have that
$$
\mathbb{E}\left[|[\eta_k^{N_l,l}-\eta_k^{N_l,l-1}](\varphi)-[\eta_k^l-\eta_k^{l-1}](\varphi)|^p\right]^{1/p} \leq T_1 + T_2
$$
where
\begin{eqnarray*}
T_1 & = & \mathbb{E}\left[|[\eta_k^{N_l,l}-\eta_k^{N_l,l-1}](\varphi) - \mathbb{E}[[\eta_k^{N_l,l}-\eta_k^{N_l,l-1}](\varphi)|\mathcal{F}_{k-1}^l]|^p\right]^{1/p} \\
T_2 & = & \mathbb{E}\left[|
\mathbb{E}[[\eta_k^{N_l,l}-\eta_k^{N_l,l-1}](\varphi)|\mathcal{F}_{k-1}]-[\eta_k^l-\eta_k^{l-1}](\varphi)|^p\right]^{1/p}.
\end{eqnarray*}
We deal with the two terms, in turn. For the case of $T_1$, one can apply the (conditional) M-Z inequality to yield
$$
T_1 \leq \frac{C}{\sqrt{N_l}} \mathbb{E}\left[|\varphi(X_k^{1,l})-\varphi(X_k^{1,l-1})|^p\right]^{1/p}.
$$
using $\varphi\in\mathcal{B}_b(\mathsf{X})\cap\textrm{Lip}(\mathsf{X})$ along with Remark \ref{rem:larate}
yields
\begin{equation}\label{eq:part_conv1}
T_1 \leq \frac{C\overline{\|\varphi\|}}{\sqrt{N_l}}\left(\Delta_l^{\tfrac{1}{2}}
+ \frac{\Delta_l^{\tfrac{1}{2}-\varepsilon}}{N_l}
\right)^{1/p}.
\end{equation}

For $T_2$, we first note that
$$
\mathbb{E}[[\eta_k^{N_l,l}-\eta_k^{N_l,l-1}](\varphi)|\mathcal{F}_{k-1}^l]-[\eta_k^l-\eta_k^{l-1}](\varphi) = 
$$
$$
\left\{\frac{\eta_{k-1}^{N_l,l}(g_{k-1}P^l(\varphi))}{\eta_{k-1}^{N_l,l}(g_{k-1})} - 
\frac{\eta_{k-1}^{N_l,l-1}(g_{k-1}P^{l-1}(\varphi))}{\eta_{k-1}^{N_l,l-1}(g_{k-1})}\right\} -
\left\{\frac{\eta_{k-1}^{l}(g_{k-1}P^l(\varphi))}{\eta_{k-1}^{l}(g_{k-1})} - 
\frac{\eta_{k-1}^{l-1}(g_{k-1}P^{l-1}(\varphi))}{\eta_{k-1}^{l-1}(g_{k-1})}\right\}
$$
where we recall that $P^l$ (resp.~$P^{l-1}$) represents the truncated Milstein kernel over unit time with discretization level $\Delta_l$
(resp.~$\Delta_{l-1}$). This can be further decomposed to 
$$
\mathbb{E}[[\eta_k^{N_l,l}-\eta_k^{N_l,l-1}](\varphi)|\mathcal{F}_{k-1}]-[\eta_k^l-\eta_k^{l-1}](\varphi) = 
$$
$$
\Bigg\{\frac{\eta_{k-1}^{N_l,l}(g_{k-1}[P^l-P](\varphi))}{\eta_{k-1}^{N_l,l}(g_{k-1})} - 
\frac{\eta_{k-1}^{l}(g_{k-1}[P^l-P](\varphi))}{\eta_{k-1}^{l}(g_{k-1})}\Bigg\} - 
\left\{\frac{\eta_{k-1}^{l}(g_{k-1}[P^{l-1}-P](\varphi))}{\eta_{k-1}^{l}(g_{k-1})} - 
\frac{\eta_{k-1}^{l-1}(g_{k-1}[P^{l-1}-P](\varphi))}{\eta_{k-1}^{l-1}(g_{k-1})}\right\} + 
$$
$$
\left\{\frac{\eta_{k-1}^{N_l,l}(g_{k-1}P(\varphi))}{\eta_{k-1}^{N_l,l}(g_{k-1})} - 
\frac{\eta_{k-1}^{N_l,l-1}(g_{k-1}P(\varphi))}{\eta_{k-1}^{N_l,l-1}(g_{k-1})}\right\} -
\left\{\frac{\eta_{k-1}^{l}(g_{k-1}P(\varphi))}{\eta_{k-1}^{l}(g_{k-1})} - 
\frac{\eta_{k-1}^{l-1}(g_{k-1}P(\varphi))}{\eta_{k-1}^{l-1}(g_{k-1})}\right\} 
$$
Therefore we have that
$$
T_2 \leq T_3 + T_4 + T_5
$$
where
\begin{eqnarray}
T_3 & = & \mathbb{E}\left[\left|\frac{\eta_{k-1}^{N_l,l}(g_{k-1}[P^l-P](\varphi))}{\eta_{k-1}^{N_l,l}(g_{k-1})} - 
\frac{\eta_{k-1}^{l}(g_{k-1}[P^l-P](\varphi))}{\eta_{k-1}^{l}(g_{k-1})}\right|^{p}\right]^{1/p} \label{eq:part_conv4}\\
T_4 & = & \mathbb{E}\left[\left|\frac{\eta_{k-1}^{N_l,l-1}(g_{k-1}[P^{l-1}-P](\varphi))}{\eta_{k-1}^{N_l,l-1}(g_{k-1})} - 
\frac{\eta_{k-1}^{l-1}(g_{k-1}[P^{l-1}-P](\varphi))}{\eta_{k-1}^{l-1}(g_{k-1})}\right|^{p}\right]^{1/p} \nonumber\\
T_5 & = & \mathbb{E}\Bigg[\Bigg|
\left\{\frac{\eta_{k-1}^{N_l,l}(g_{k-1}P(\varphi))}{\eta_{k-1}^{N_l,l}(g_{k-1})} - 
\frac{\eta_{k-1}^{N_l,l-1}(g_{k-1}P(\varphi))}{\eta_{k-1}^{N_l,l-1}(g_{k-1})}\right\} -
\nonumber\\ & &
\left\{\frac{\eta_{k-1}^{l}(g_{k-1}P(\varphi))}{\eta_{k-1}^{l}(g_{k-1})} - 
\frac{\eta_{k-1}^{l-1}(g_{k-1}P(\varphi))}{\eta_{k-1}^{l-1}(g_{k-1})}\right\}\Bigg|^{p}\Bigg]^{1/p}.\nonumber
\end{eqnarray}
$T_3$ and $T_4$ can be treated using similar calculations, so we only consider $T_4$.
For the case of $T_3$, clearly we have the upper-bound
$$
T_3 \leq T_6 + T_7
$$
where
\begin{eqnarray*}
T_6 & = & \mathbb{E}\left[\left|\frac{\eta_{k-1}^{N_l,l}(g_{k-1}[P^l-P](\varphi))-\eta_{k-1}^{l}(g_{k-1}[P^l-P](\varphi))}{\eta_{k-1}^{N_l,l}(g_{k-1})}\right|^{p}\right]^{1/p} \\
T_7 & = & \mathbb{E}\left[\left|\frac{
\eta_{k-1}^{l}(g_{k-1}[P^l-P](\varphi))[\eta_{k-1}^{l}-\eta_{k-1}^{N_l,l}](g_{k-1})
}{\eta_{k-1}^{N_l,l}(g_{k-1})\eta_{k-1}^{l}(g_{k-1})}\right|^{p}\right]^{1/p}.
\end{eqnarray*}
For $T_6$ we have that 
$$
T_6 \leq C\mathbb{E}\left[\left|[\eta_{k-1}^{N_l,l}-\eta_{k-1}^{l}](g_{k-1}[P^l-P](\varphi))\right|^{p}\right]^{1/p}.
$$
By using standard results for particle filters (e.g.~\cite[Proposition C.6]{mlpf}) we have
$$
T_6 \leq \frac{C\|[P^l-P](\varphi)\|_{\infty}}{\sqrt{N_l}}.
$$
Then using standard results for weak errors (the truncated Milstein scheme is a first-order method) we have
$$
T_7 \leq \frac{C\overline{\|\varphi\|}\Delta_l}{\sqrt{N_l}}.
$$
For $T_7$, again using weak errors 
$$
T_7 \leq C\overline{\|\varphi\|}\Delta_l \mathbb{E}\left[\left|[\eta_{k-1}^{l}-\eta_{k-1}^{N_l,l}](g_{k-1})\right|^p\right]^{1/p}
$$
and again using standard results for particle filters we obtain
$$
T_7 \leq \frac{C\overline{\|\varphi\|}\Delta_l}{\sqrt{N_l}}.
$$
Hence, we have shown that
\begin{equation}\label{eq:part_conv2}
\max\{T_3,T_4\} \leq \frac{C\overline{\|\varphi\|}\Delta_l}{\sqrt{N_l}}.
\end{equation}
For $T_5$, one can apply \cite[Lemma C.5]{mlpf} to show that
$$
T_5 \leq \sum_{j=1}^6 T_{j+7}
$$
where
\begin{eqnarray*}
T_8 & = &  \mathbb{E}\left[\left|\frac{1}{\eta_{k-1}^{N_l,l}(g_{k-1})}
\left\{
[\eta_{k-1}^{N_l,l}-\eta_{k-1}^{N_l,l-1}](g_{k-1}P(\varphi)) -
[\eta_{k-1}^{l}-\eta_{k-1}^{l-1}](g_{k-1}P(\varphi))
\right\}
\right|^p\right]^{1/p} \\
T_9 & = & \mathbb{E}\left[\left|\frac{\eta_{k-1}^{N_l,l-1}(g_{k-1}P(\varphi))}{\eta_{k-1}^{N_l,l}(g_{k-1})
\eta_{k-1}^{N_l,l-1}(g_{k-1})}
\left\{
[\eta_{k-1}^{N_l,l}-\eta_{k-1}^{N_l,l-1}](g_{k-1}) -
[\eta_{k-1}^{l}-\eta_{k-1}^{l-1}](g_{k-1})
\right\}
\right|^p\right]^{1/p}\\
T_{10} & = & \mathbb{E}\left[\left|\frac{1}{\eta_{k-1}^{N_l,l}(g_{k-1})\eta_{k-1}^{l}(g_{k-1})}
\{[\eta_{k-1}^{l}-\eta_{k-1}^{N_l,l}](g_{k-1})\}\{[\eta_{k-1}^{l}-\eta_{k-1}^{l-1}](g_{k-1}P(\varphi))\}
\right|^p\right]^{1/p}\\
T_{11} & = & \mathbb{E}\left[\left|\frac{1}{\eta_{k-1}^{N_l,l}(g_{k-1})\eta_{k-1}^{N_l,l-1}(g_{k-1})}
\{[\eta_{k-1}^{N_l,l-1}-\eta_{k-1}^{l-1}](g_{k-1}P(\varphi))\}\{
[\eta_{k-1}^{l}-\eta_{k-1}^{l-1}](g_{k-1})
\}
\right|^p\right]^{1/p}\\
T_{12} & = & \mathbb{E}\left[\left|\frac{\eta_{k-1}^{l-1}(g_{k-1}P(\varphi))}{
\eta_{k-1}^{l}(g_{k-1})\eta_{k-1}^{N_l,l-1}(g_{k-1})\eta_{k-1}^{l-1}(g_{k-1})}
\{[\eta_{k-1}^{N_l,l-1}-\eta_{k-1}^{l-1}](g_{k-1})\}
\{[\eta_{k-1}^{l}-\eta_{k-1}^{l-1}](g_{k-1})\}
\right|^p\right]^{1/p}\\
T_{13} & = & \mathbb{E}\left[\left|\frac{\eta_{k-1}^{l-1}(g_{k-1}P(\varphi))}{\eta_{k-1}^{N_l,l}(g_{k-1})
\eta_{k-1}^{l}(g_{k-1})\eta_{k-1}^{N_l,l-1}(g_{k-1})}
\{[\eta_{k-1}^{N_l,l}-\eta_{k-1}^{l}](g_{k-1})\}
\{[\eta_{k-1}^{l}-\eta_{k-1}^{l-1}](g_{k-1})\}
\right|^p\right]^{1/p}.
\end{eqnarray*}
Since  $T_8$ and $T_9$ can be bounded using similar approaches, we consider only $T_8$. Similarly
$T_{10},\dots,T_{13}$ can be bounded in almost the same way, so we consider only $T_{10}$.
For $T_8$ we have the upper-bound
$$
T_8 \leq C\mathbb{E}\left[\left|
\left\{
[\eta_{k-1}^{N_l,l}-\eta_{k-1}^{N_l,l-1}](g_{k-1}P(\varphi)) -
[\eta_{k-1}^{l}-\eta_{k-1}^{l-1}](g_{k-1}P(\varphi))
\right\}
\right|^p\right]^{1/p}. 
$$
Since $g_{k-1}P(\varphi)\in\mathcal{B}_b(\mathsf{X})\cap\textrm{Lip}(\mathsf{X})$ (see e.g.~\cite[Eq.(2.6)]{delm})
it follows by the induction hypothesis
$$
T_8 \leq \frac{C\overline{\|\varphi\|}}{\sqrt{N_l}}\left(\Delta_l^{\tfrac{1}{2}}
+ \frac{\Delta_l^{\tfrac{1}{2}-\varepsilon}}{N_l}
\right)^{1/p}.
$$
For $T_{10}$, we have the upper-bound
$$
T_{10} \leq C|[\eta_{k-1}^{l}-\eta_{k-1}^{l-1}](g_{k-1}P(\varphi))|
\mathbb{E}[|([\eta_{k-1}^{l}-\eta_{k-1}^{N_l,l}](g_{k-1}))|^p]^{1/p}.
$$
Then using \cite[Lemma D.2]{mlpf} and standard results for particle filters
$$
T_{10} \leq \frac{C\overline{\|\varphi\|}\Delta_l}{\sqrt{N_l}}.
$$
Therefore we deduce that 
\begin{equation}\label{eq:part_conv3}
T_5 \leq \frac{C\overline{\|\varphi\|}}{\sqrt{N_l}}\left(\Delta_l^{\tfrac{1}{2}}
+ \frac{\Delta_l^{\tfrac{1}{2}-\varepsilon}}{N_l}
\right)^{1/p}.
\end{equation}
The proof is then easily completed by combining \eqref{eq:part_conv1}, \eqref{eq:part_conv2} and \eqref{eq:part_conv3}.
\end{proof}

\begin{rem}\label{rem:ml_conv1}
One can also deduce the following result using a similar (but simpler) proof to Lemma \ref{lem:ml_conv1}.
Assume (A\ref{ass:diff1}-\ref{ass:g}). Then for any $(k,p)\in\mathbb{N}\times[1,\infty)$ there exists a $C<+\infty$ such that for any $(l,N_l,\varepsilon,\varphi)\in\mathbb{N}^2\times(0,1/2)\times\mathcal{B}_b(\mathsf{X})\cap\textrm{\emph{Lip}}(\mathsf{X})$
$$
\mathbb{E}\left[|[\eta_k^{N_l,l}-\eta_k^{N_l,l,a}](\varphi)|^p\right] \leq C\overline{\|\varphi\|}^p\left(\frac{ \Delta_l^{\tfrac{1}{2}}}{N_l^{\tfrac{p}{2}}}
+ \frac{\Delta_l^{\tfrac{1}{2}-\varepsilon}}{N_l^{\tfrac{p}{2}+1}}
\right).
$$
\end{rem}

Below $\mathcal{C}_b^2(\mathsf{X},\mathbb{R})$ denotes the collection of twice continuously differentiable functions from $\mathsf{X}$ to $\mathbb{R}$ with bounded derivatives of all order 1 and 2.

\begin{lem}\label{lem:ml_conv2}
Assume (A\ref{ass:diff1}-\ref{ass:g}). Then for any $(k,p,\varphi)\in\mathbb{N}\times[1,\infty)\times
\mathcal{B}_b(\mathsf{X})\cap\mathcal{C}_b^2(\mathsf{X},\mathbb{R})$ there exists a $C<+\infty$ such that for any $(l,N_l,\varepsilon)\in\mathbb{N}^2\times(0,1/2)$
$$
\mathbb{E}\left[|[\tfrac{1}{2}\eta_k^{N_l,l}+\tfrac{1}{2}\eta_k^{N,l,a}-\eta_k^{N_l,l-1}](\varphi)-[\eta_k^l-\eta_k^{l-1}](\varphi)|^p\right] \leq C\left(\frac{\Delta_l}{N_l^{p/2}} +
\frac{\Delta_l^{\tfrac{1}{2}}}{N_l^p}
+ \frac{\Delta_l^{\tfrac{1}{2}-\varepsilon}}{N_l^{p/2+1}}
\right).
$$
\end{lem}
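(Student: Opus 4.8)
The plan is to argue by induction on $k$, following the structure of the proof of Lemma~\ref{lem:ml_conv1} but replacing the Lipschitz estimates on the test function by a second-order Taylor expansion; this is precisely where $\varphi\in\mathcal{C}_b^2(\mathsf{X},\mathbb{R})$ enters and is what produces the rate $\Delta_l$ (rather than $\Delta_l^{1/2}$) in the first summand. The key elementary remark is that, with $\overline{X}_k^{i,l}=\tfrac12(X_k^{i,l}+X_k^{i,l,a})$, expanding $\varphi(X_k^{i,l})$ and $\varphi(X_k^{i,l,a})$ to second order about $\overline{X}_k^{i,l}$ the first-order terms cancel, so that $\tfrac12\varphi(X_k^{i,l})+\tfrac12\varphi(X_k^{i,l,a})-\varphi(X_k^{i,l-1})=[\varphi(\overline{X}_k^{i,l})-\varphi(X_k^{i,l-1})]+R_k^i$ with $|R_k^i|\le C\min\{\|X_k^{i,l}-X_k^{i,l,a}\|^2,1\}$, using that $\nabla^2\varphi$ and $\varphi$ are bounded. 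For the initialization $k=1$ the three systems start from the common point $x_0$, the deterministic bias is $[\tfrac12\eta_1^l+\tfrac12\eta_1^{l,a}-\eta_1^{l-1}](\varphi)=[\eta_1^l-\eta_1^{l-1}](\varphi)$, and by the conditional Marcinkiewicz--Zygmund inequality the claim reduces to controlling $\mathbb{E}[|\tfrac12\varphi(X_1^{1,l})+\tfrac12\varphi(X_1^{1,l,a})-\varphi(X_1^{1,l-1})|^p]$; applying the above decomposition, Proposition~\ref{prop:diff1} (with equal starting points) gives the $\Delta_l^p$ rate for the first term and \cite[Lemma~4.2]{ml_anti}, applied to $\|X_1^{1,l}-X_1^{1,l,a}\|^{2p}$, controls the remainder.

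For the induction step I would split
$$
\mathbb{E}\left[\left|[\tfrac12\eta_k^{N_l,l}+\tfrac12\eta_k^{N_l,l,a}-\eta_k^{N_l,l-1}](\varphi)-[\eta_k^l-\eta_k^{l-1}](\varphi)\right|^p\right]^{1/p}\le T_1+T_2,
$$
exactly as for $T_1,T_2$ in Lemma~\ref{lem:ml_conv1}: $T_1$ is the $\mathbb{L}_p$-fluctuation of $[\tfrac12\eta_k^{N_l,l}+\tfrac12\eta_k^{N_l,l,a}-\eta_k^{N_l,l-1}](\varphi)$ about its conditional mean given $\mathcal{F}_{k-1}^l$, and $T_2$ is the $\mathbb{L}_p$-distance of that conditional mean from $[\eta_k^l-\eta_k^{l-1}](\varphi)$. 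For $T_1$: conditionally on the resampled particles at time $k-1$ the triples produced by Algorithm~\ref{alg:milstein} are independent over $i$, so the conditional M--Z inequality gives $T_1\le(C/\sqrt{N_l})\,\mathbb{E}[|\tfrac12\varphi(X_k^{1,l})+\tfrac12\varphi(X_k^{1,l,a})-\varphi(X_k^{1,l-1})|^p]^{1/p}$; invoking the Taylor decomposition, bounding $|\varphi(\overline{X}_k^{1,l})-\varphi(X_k^{1,l-1})|\le C\min\{\|\overline{X}_k^{1,l}-X_k^{1,l-1}\|,1\}$ and applying Lemma~\ref{lem:avg_part} together with the exponent-$2p$ analogue of the second bullet of Remark~\ref{rem:larate} — which, since $2p\ge2$, sharpens (by the same argument, i.e.\ Lemmata~\ref{lem:cpf1} and \ref{lem:card_lem}) to $\mathbb{E}[\min\{\|X_k^{1,l}-X_k^{1,l,a}\|^{2p},1\}]\le C(\Delta_l+\Delta_l^{1/2-\varepsilon}/N_l)$ — yields $T_1^p\le C(\Delta_l/N_l^{p/2}+\Delta_l^{1/2-\varepsilon}/N_l^{p/2+1})$, i.e.\ the first and third terms of the bound.

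The bulk of the work is $T_2$, treated as in Lemma~\ref{lem:ml_conv1} while carrying along the three coupled systems. First, writing $P^l\varphi=P\varphi+[P^l-P]\varphi$ and $P^{l-1}\varphi=P\varphi+[P^{l-1}-P]\varphi$ in the conditional means, the pieces containing $[P^l-P]\varphi$ and $[P^{l-1}-P]\varphi$ each appear as a difference between a normalized particle-filter estimate and its limit of a function with supremum norm $\mathcal{O}(\Delta_l)$ (the truncated Milstein scheme is a first-order weak method and $\varphi\in\mathcal{C}_b^2$), hence are $\mathcal{O}(\Delta_l/\sqrt{N_l})$ in $\mathbb{L}_p$ by \cite[Proposition~C.6]{mlpf}; raised to the $p$-th power these are $\mathcal{O}(\Delta_l^p/N_l^{p/2})$ and are absorbed. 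What remains is the ``$P\varphi$ part'', a difference of three-way ratios, to which I would apply the algebraic identity Lemma~\ref{lem:tech_lem_rat} on the empirical side (with $G^\bullet=\eta_{k-1}^{N_l,\bullet}(g_{k-1}P(\varphi))$, $f^\bullet=\eta_{k-1}^{N_l,\bullet}(g_{k-1})$) and, after writing the deterministic target $[\eta_k^l-\eta_k^{l-1}](\varphi)$ in the same three-ratio form, also on the limiting side, then compare the two expansions term by term. The leading terms are, up to the $\mathcal{O}(1)$ factors $\eta_{k-1}^{N_l,l-1}(g_{k-1})^{-1}$ and $\eta_{k-1}^{l-1}(g_{k-1})^{-1}$, the empirical and limiting antithetic defects of $g_{k-1}P(\varphi)$, namely $[\tfrac12\eta_{k-1}^{N_l,l}+\tfrac12\eta_{k-1}^{N_l,l,a}-\eta_{k-1}^{N_l,l-1}](g_{k-1}P(\varphi))$ versus $[\eta_{k-1}^l-\eta_{k-1}^{l-1}](g_{k-1}P(\varphi))$; these are handled by the induction hypothesis, which applies because $g_{k-1}P(\varphi)\in\mathcal{B}_b(\mathsf{X})\cap\mathcal{C}_b^2(\mathsf{X},\mathbb{R})$ under (A\ref{ass:diff1}--\ref{ass:g}) — the kernel $P$ mapping $\mathcal{C}_b^2(\mathsf{X},\mathbb{R})$ into itself by standard regularity of the SDE flow under bounded coefficients with bounded derivatives — together with the routine correction of $\eta_{k-1}^{N_l,l-1}(g_{k-1})$ against its limit. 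The differences of the correction terms are products of either an antithetic fluctuation $[\eta_{k-1}^{N_l,l}-\eta_{k-1}^{N_l,l,a}](\cdot)$ (controlled in $\mathbb{L}_{2p}$ by Remark~\ref{rem:ml_conv1}) or a particle-filter fluctuation with a fine/coarse weight difference or the antithetic defect $[\tfrac12\eta_{k-1}^{N_l,l}+\tfrac12\eta_{k-1}^{N_l,l,a}-\eta_{k-1}^{N_l,l-1}](g_{k-1})$ (controlled, respectively, via \cite[Lemma~D.2]{mlpf} and the induction hypothesis applied to $g_{k-1}\in\mathcal{C}_b^2$); by Cauchy--Schwarz the product of an antithetic fluctuation and a fine/coarse weight difference produces, after the $p$-th power, precisely the extra term $\Delta_l^{1/2}/N_l^p$, while the remaining products (including the particle-filter-fluctuation-times-bias pieces, as in $T_{10}$--$T_{13}$ of Lemma~\ref{lem:ml_conv1}) reproduce only terms already present. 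Summing the $T_1$ and $T_2$ contributions and raising to the $p$-th power closes the induction.

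The step I expect to be the main obstacle is the organization of this $T_2$-decomposition for three coupled systems: one must pick out the intermediate quantities that are $\mathcal{C}_b^2$ test functions to which the induction hypothesis applies (namely $g_{k-1}P(\varphi)$ and $g_{k-1}$) from the mixed products that must instead be bounded by Remark~\ref{rem:ml_conv1} and the sharp weight estimates of Section~\ref{app:rates} (Lemmata~\ref{lem:cpf_weight1}, \ref{lem:card_lem} and the companion Remarks), taking care that the second-order antithetic cancellation is not lost at any stage of the recursion — in particular, verifying that $P$, and hence $g_{k-1}P(\cdot)$, preserves $\mathcal{C}_b^2(\mathsf{X},\mathbb{R})$ under (A\ref{ass:diff1}) is what lets the induction close with the $\Delta_l$ rate.
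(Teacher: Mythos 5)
Your proposal follows essentially the same route as the paper's proof: induction on $k$, the $T_1+T_2$ split with the conditional Marcinkiewicz--Zygmund bound plus the antithetic second-order Taylor cancellation (Lemma \ref{lem:avg_part} and the capped-moment bound on $\|X_k^{i,l}-X_k^{i,l,a}\|^2$) for the fluctuation, then the weak-error splitting and the Lemma \ref{lem:tech_lem_rat} ratio decomposition, closing the induction via $g_{k-1}P(\varphi)\in\mathcal{C}_b^2$ and extracting the $\Delta_l^{1/2}/N_l^p$ term from the Cauchy--Schwarz product of the antithetic fluctuation (Remark \ref{rem:ml_conv1}) with the fine/coarse fluctuation (Lemma \ref{lem:ml_conv1}). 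This matches the paper's argument in all essentials, with your explicit exponent-$2p$ sharpening of Remark \ref{rem:larate} simply spelling out what the paper uses implicitly.
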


\begin{proof}
The proof is by induction, with initialization following easily by the M-Z inequality and \cite[Lemma 2.2, Theorem 4.10]{ml_anti}
(the case $p\in[1,2)$ can also be covered with their theory), so we proceed immediately to the induction step.

As for the proof of Lemma \ref{lem:ml_conv1} one can add and subtract the conditional expectation to obtain an upper-bound
$$
\mathbb{E}\left[|[\tfrac{1}{2}\eta_k^{N_l,l}+\tfrac{1}{2}\eta_k^{N,l,a}-\eta_k^{N_l,l-1}](\varphi)-[\eta_k^l-\eta_k^{l-1}](\varphi)|^p\right]^{1/p} \leq T_1 + T_2 
$$
where
\begin{eqnarray*}
T_1 & = & \mathbb{E}\left[|[\tfrac{1}{2}\eta_k^{N_l,l}+\tfrac{1}{2}\eta_k^{N,l,a}-\eta_k^{N_l,l-1}](\varphi)-
\mathbb{E}[\tfrac{1}{2}\eta_k^{N_l,l}+\tfrac{1}{2}\eta_k^{N,l,a}-\eta_k^{N_l,l-1}](\varphi)|\mathcal{F}_{k-1}]|^p\right]^{1/p}\\
T_2 & = &  \mathbb{E}\left[|\mathbb{E}[\tfrac{1}{2}\eta_k^{N_l,l}+\tfrac{1}{2}\eta_k^{N,l,a}-\eta_k^{N_l,l-1}](\varphi)|\mathcal{F}_{k-1}]-[\eta_k^l-\eta_k^{l-1}](\varphi)|^p\right]^{1/p}.
\end{eqnarray*}

For $T_1$ one can use the conditional M-Z inequality along with the boundedness of $\varphi$, \cite[Lemma 2.2]{ml_anti} and Lemma \ref{lem:avg_part} and Remark \ref{rem:larate} to deduce that
\begin{equation}\label{eq:part_conv5} 
T_1 \leq \frac{C}{\sqrt{N_l}}\left(\Delta_l
+ \frac{\Delta_l^{\tfrac{1}{2}-\varepsilon}}{N_l}
\right)^{1/p}.
\end{equation}

The case of $T_2$ is more challenging. We have the following decomposition
$$
T_2 \leq T_3 + T_4
$$
where
\begin{eqnarray}
T_3 & = & \mathbb{E}\Bigg[\Bigg|
\Bigg\{\frac{\tfrac{1}{2}\eta_{k-1}^{N_l,l}(g_{k-1}[P^l-P](\varphi))}{\eta_{k-1}^{N_l,l}(g_{k-1})} + 
\frac{\tfrac{1}{2}\eta_{k-1}^{N_l,l,a}(g_{k-1}[P^l-P](\varphi))}{\eta_{k-1}^{N_l,l,a}(g_{k-1})}  - 
\frac{\eta_{k-1}^{N_l,l-1}(g_{k-1}[P^{l-1}-P](\varphi))}{\eta_{k-1}^{N_l,l-1}(g_{k-1})} 
\Bigg\}
- \nonumber\\ & &
\Bigg\{
\frac{\tfrac{1}{2}\eta_{k-1}^{l}(g_{k-1}[P^l-P](\varphi))}{\eta_{k-1}^{l}(g_{k-1})} + 
\frac{\tfrac{1}{2}\eta_{k-1}^{l}(g_{k-1}[P^l-P](\varphi))}{\eta_{k-1}^{l}(g_{k-1})}  - 
\frac{\eta_{k-1}^{l-1}(g_{k-1}[P^{l-1}-P](\varphi))}{\eta_{k-1}^{l-1}(g_{k-1})} 
\Bigg\}
\Bigg|^p\Bigg]^{1/p} \nonumber\\
T_4 & = & \mathbb{E}\Bigg[\Bigg|
\Bigg\{\frac{\tfrac{1}{2}\eta_{k-1}^{N_l,l}(g_{k-1}P(\varphi))}{\eta_{k-1}^{N_l,l}(g_{k-1})} + 
\frac{\tfrac{1}{2}\eta_{k-1}^{N_l,l,a}(g_{k-1}P(\varphi))}{\eta_{k-1}^{N_l,l,a}(g_{k-1})}  - 
\frac{\eta_{k-1}^{N_l,l-1}(g_{k-1}P(\varphi))}{\eta_{k-1}^{N_l,l-1}(g_{k-1})} 
\Bigg\}
- \nonumber\\ & &
\Bigg\{
\frac{\tfrac{1}{2}\eta_{k-1}^{l}(g_{k-1}P(\varphi))}{\eta_{k-1}^{l}(g_{k-1})} + 
\frac{\tfrac{1}{2}\eta_{k-1}^{l}(g_{k-1}P(\varphi))}{\eta_{k-1}^{l}(g_{k-1})}  - 
\frac{\eta_{k-1}^{l-1}(g_{k-1}P(\varphi))}{\eta_{k-1}^{l-1}(g_{k-1})} 
\Bigg\}
\Bigg|^p\Bigg]^{1/p}.\label{eq:part_conv8} 
\end{eqnarray}
For the case of $T_3$ one has the upper-bound:
$$
T_3 \leq \sum_{j=1}^3 T_{j+4}
$$
where
\begin{eqnarray*}
T_5 & = & \mathbb{E}\Bigg[\Bigg|
\Bigg\{\frac{\tfrac{1}{2}\eta_{k-1}^{N_l,l}(g_{k-1}[P^l-P](\varphi))}{\eta_{k-1}^{N_l,l}(g_{k-1})}  - 
\frac{\tfrac{1}{2}\eta_{k-1}^{l}(g_{k-1}[P^l-P](\varphi))}{\eta_{k-1}^{l}(g_{k-1})}\Bigg|^p\Bigg]^{1/p}\\
T_6 & = & \mathbb{E}\Bigg[\Bigg|
\Bigg\{\frac{\tfrac{1}{2}\eta_{k-1}^{N_l,l,a}(g_{k-1}[P^l-P](\varphi))}{\eta_{k-1}^{N_l,l,a}(g_{k-1})}  - 
\frac{\tfrac{1}{2}\eta_{k-1}^{l}(g_{k-1}[P^l-P](\varphi))}{\eta_{k-1}^{l}(g_{k-1})}\Bigg|^p\Bigg]^{1/p}\\
T_7 & = & 
\mathbb{E}\Bigg[\Bigg|
\Bigg\{\frac{\tfrac{1}{2}\eta_{k-1}^{N_l,l-1}(g_{k-1}[P^{l-1}-P](\varphi))}{\eta_{k-1}^{N_l,l-1}(g_{k-1})}  - 
\frac{\tfrac{1}{2}\eta_{k-1}^{l-1}(g_{k-1}[P^{l-1}-P](\varphi))}{\eta_{k-1}^{l-1}(g_{k-1})}\Bigg|^p\Bigg]^{1/p}.
\end{eqnarray*}
Each of these terms can be controlled (almost) exactly as is done for \eqref{eq:part_conv4} in the proof of Lemma \ref{lem:ml_conv1} and so we do not give the proof, but simply state that
\begin{equation}\label{eq:part_conv6} 
T_3 \leq \frac{C\Delta_l}{\sqrt{N_l}}.
\end{equation}
For $T_4$ one can apply Lemma \ref{lem:tech_lem_rat} along with Minkowski to deduce that
$$
T_4 \leq \sum_{j=1}^4 T_{j+7}
$$
where
\begin{eqnarray*}
T_8 & = & \mathbb{E}\Bigg[\Bigg|\frac{1}{\eta_{k-1}^{N_l,l-1}(g_{k-1})}
[\tfrac{1}{2}\eta_{k-1}^{N_l,l}+\tfrac{1}{2}\eta_{k-1}^{N_l,l,a}-\eta_{k-1}^{N_l,l-1}](g_{k-1}P(\varphi)) -
\frac{1}{\eta_{k-1}^{l-1}(g_{k-1})}
[\tfrac{1}{2}\eta_{k-1}^{l}+\\ & & \tfrac{1}{2}\eta_{k-1}^{l}-\eta_{k-1}^{l-1}](g_{k-1}P(\varphi))
\Bigg|^p\Bigg]^{1/p}\\
T_9 & = &  \mathbb{E}\Bigg[\Bigg|\frac{1}{\eta_{k-1}^{N_l,l}(g_{k-1})\eta_{k-1}^{N_l,l-1}(g_{k-1})}\tfrac{1}{2}
\{[\eta_{k-1}^{N_l,l}-\eta_{k-1}^{N_l,l,a}](g_{k-1}P(\varphi))\}\{[\eta_{k-1}^{N_l,l-1}-\eta_{k-1}^{N_l,l}](g_{k-1})\}
\Bigg|^p\Bigg]^{1/p} \\
T_{10} & = & \mathbb{E}\Bigg[\Bigg|
\frac{\tfrac{1}{2}\eta_{k-1}^{N_l,l,a}(g_{k-1}P(\varphi))}{\eta_{k-1}^{N_l,l,a}(g_{k-1})\eta_{k-1}^{N_l,l}(g_{k-1})\eta_{k-1}^{N_l,l-1}](g_{k-1})}
\{[\eta_{k-1}^{N_l,l,a}-\eta_{k-1}^{N_l,l}](g_{k-1})\}\{[\eta_{k-1}^{N_l,l-1}-\eta_{k-1}^{N_l,l}](g_{k-1})\}
\Bigg|^p\Bigg]^{1/p} \\
T_{11} & = & \mathbb{E}\Bigg[\Bigg|
\frac{\eta_{k-1}^{N_l,l,a}(g_{k-1}P(\varphi))}{\eta_{k-1}^{N_l,l,a}(g_{k-1})\eta_{k-1}^{N_l,l-1}(g_{k-1})}
[\tfrac{1}{2}\eta_{k-1}^{N_l,l}+\tfrac{1}{2}\eta_{k-1}^{N_l,l,a}-\eta_{k-1}^{N_l,l-1}](g_{k-1}) - 
\frac{\eta_{k-1}^{l}(g_{k-1}P(\varphi))}{\eta_{k-1}^{l}(g_{k-1})\eta_{k-1}^{l-1}(g_{k-1})}\times \\ & &
[\tfrac{1}{2}\eta_{k-1}^{l}+\tfrac{1}{2}\eta_{k-1}^{l}-\eta_{k-1}^{l-1}](g_{k-1})
\Bigg|^p\Bigg]^{1/p}.
\end{eqnarray*}
As $T_8$ (resp.~$T_9$) can be dealt with using similar arguments to $T_{11}$ (resp.~$T_{10}$) we only prove bounds for $T_8$ (resp.~$T_9$). For $T_8$ we have the upper-bound
$$
T_8 \leq T_{12} + T_{13}
$$
where
\begin{eqnarray*}
T_{12} & = & \mathbb{E}\Bigg[\Bigg|
\frac{1}{\eta_{k-1}^{N_l,l-1}(g_{k-1})}\Bigg\{
[\tfrac{1}{2}\eta_{k-1}^{N_l,l}+\tfrac{1}{2}\eta_{k-1}^{N_l,l,a}-\eta_{k-1}^{N_l,l-1}](g_{k-1}P(\varphi)) - 
[\tfrac{1}{2}\eta_{k-1}^{l}+\tfrac{1}{2}\eta_{k-1}^{l}-\\ & &\eta_{k-1}^{l-1}](g_{k-1}P(\varphi))\Bigg\}
\Bigg|^p\Bigg]^{1/p} \\
T_{13} & = & \mathbb{E}\Bigg[\Bigg|\frac{[\tfrac{1}{2}\eta_{k-1}^{l}+\tfrac{1}{2}\eta_{k-1}^{l}-\eta_{k-1}^{l-1}](g_{k-1}P(\varphi))}{\eta_{k-1}^{N_l,l-1}(g_{k-1})\eta_{k-1}^{l-1}(g_{k-1})}[\eta_{k-1}^{l-1}-\eta_{k-1}^{N_l,l-1}](g_{k-1})
\Bigg|^p\Bigg]^{1/p}.
\end{eqnarray*}
For $T_{12}$ one can use the lower-bound on $g_{k-1}$ along with $g_{k-1}P(\varphi)\in\mathcal{C}_b^2(\mathsf{X},\mathbb{R})$ (see e.g.~\cite[Corollary 2.2.8]{stroock}) and the induction hypothesis to yield
$$
T_{12} \leq \frac{C}{\sqrt{N_l}}\left(\Delta_l
+ \frac{\Delta_l^{\tfrac{1}{2}-\varepsilon}}{N_l}
\right)^{1/p}.
$$
For $T_{13}$ we apply \cite[Lemma D.2]{mlpf} and standard results for particle filters to obtain
$$
T_{13} \leq \frac{C\Delta_l}{\sqrt{N_l}}.
$$
So that 
$$
T_{8} \leq \frac{C}{\sqrt{N_l}}\left(\Delta_l
+ \frac{\Delta_l^{\tfrac{1}{2}-\varepsilon}}{N_l}
\right)^{1/p}.
$$
For $T_9$ we have the upper-bound
$$
T_9 \leq T_{14} + T_{15}
$$
where
\begin{eqnarray*}
T_{14} & = & \mathbb{E}\Bigg[\Bigg|
\frac{1}{\eta_{k-1}^{N_l,l}(g_{k-1})\eta_{k-1}^{N_l,l-1}(g_{k-1})}\tfrac{1}{2}
\{[\eta_{k-1}^{N_l,l}-\eta_{k-1}^{N_l,l,a}](g_{k-1}P(\varphi))\}\{[\eta_{k-1}^{N_l,l-1}-\eta_{k-1}^{N_l,l}](g_{k-1})
-\\& &[\eta_{k-1}^{l-1}-\eta_{k-1}^{l}](g_{k-1})\}
\Bigg|^p\Bigg]^{1/p} \\
T_{15} & = & \mathbb{E}\Bigg[\Bigg|
\frac{1}{\eta_{k-1}^{N_l,l}(g_{k-1})\eta_{k-1}^{N_l,l-1}(g_{k-1})}\tfrac{1}{2}
\{[\eta_{k-1}^{N_l,l}-\eta_{k-1}^{N_l,l,a}](g_{k-1}P(\varphi))\}\{
[\eta_{k-1}^{l-1}-\eta_{k-1}^{l}](g_{k-1})\}
\Bigg|^p\Bigg]^{1/p}.
\end{eqnarray*}
For $T_{14}$ one can use the lower-bound on $g_{k-1}$, Cauchy-Schwarz, Remark \ref{rem:ml_conv1} and Lemma \ref{lem:ml_conv1} to give
$$
T_{14} \leq \frac{C}{N_l}\left(\Delta_l
+ \frac{\Delta_l^{\tfrac{1}{2}-\varepsilon}}{N_l}
\right)^{1/p}.
$$
For $T_{15}$ one can use the lower-bound on $g_{k-1}$, \cite[Lemma D.2]{mlpf} and Remark \ref{rem:ml_conv1} to give
$$
T_{15} \leq \frac{C}{\sqrt{N_l}}\left(\Delta_l^{p+1/2}
+ \frac{\Delta_l^{p+\tfrac{1}{2}-\varepsilon}}{N_l}
\right)^{1/p}
$$
and thus
$$
T_{9} \leq \frac{C}{\sqrt{N_l}}\left(\Delta_l
+ \frac{\Delta_l^{\tfrac{1}{2}-\varepsilon}}{N_l}
\right)^{1/p} + \frac{C}{N_l}\left(\Delta_l^{1/2}
+ \frac{\Delta_l^{\tfrac{1}{2}-\varepsilon}}{N_l}
\right)^{1/p}  .
$$
Therefore we have shown that
\begin{equation}\label{eq:part_conv7} 
T_{4} \leq \frac{C}{\sqrt{N_l}}\left(\Delta_l + \frac{\Delta_l^{\tfrac{1}{2}}}{N_l^{p/2}}
+ \frac{\Delta_l^{\tfrac{1}{2}-\varepsilon}}{N_l}
\right)^{1/p}.
\end{equation}
The proof can be completed by combining the bounds \eqref{eq:part_conv5}, \eqref{eq:part_conv6} and \eqref{eq:part_conv7}.
\end{proof}

\begin{theorem}\label{theo:var}
Assume (A\ref{ass:diff1}-\ref{ass:g}). Then for any $(k,p,\varphi)\in\mathbb{N}\times[1,\infty)\times
\mathcal{B}_b(\mathsf{X})\cap\mathcal{C}_b^2(\mathsf{X},\mathbb{R})$ there exists a $C<+\infty$ such that for any $(l,N_l,\varepsilon)\in\mathbb{N}^2\times(0,1/2)$
$$
\mathbb{E}\left[|[\pi_k^l-\pi_{k}^{l-1}]^{N_l}(\varphi) - 
[\pi_k^l-\pi_{k}^{l-1}](\varphi)
|^p\right] \leq  C\left(\frac{\Delta_l}{N_l^{p/2}} +
\frac{\Delta_l^{\tfrac{1}{2}}}{N_l^p}
+ \frac{\Delta_l^{\tfrac{1}{2}-\varepsilon}}{N_l^{p/2+1}}
\right).
$$
\end{theorem}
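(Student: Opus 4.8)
The plan is to pass from the \emph{predictor} bounds already established in Lemma~\ref{lem:ml_conv1}, Lemma~\ref{lem:ml_conv2} and Remark~\ref{rem:ml_conv1} to the \emph{filter} increment by self-normalisation. Using the identities $\pi_k^l(\varphi)=\eta_k^l(g_k\varphi)/\eta_k^l(g_k)$, $\pi_k^{l-1}(\varphi)=\eta_k^{l-1}(g_k\varphi)/\eta_k^{l-1}(g_k)$, and for the particle systems $\pi_k^{N_l,l}(\varphi)=\eta_k^{N_l,l}(g_k\varphi)/\eta_k^{N_l,l}(g_k)$, $\pi_k^{N_l,l,a}(\varphi)=\eta_k^{N_l,l,a}(g_k\varphi)/\eta_k^{N_l,l,a}(g_k)$ and $\bar{\pi}_k^{N_l,l-1}(\varphi)=\eta_k^{N_l,l-1}(g_k\varphi)/\eta_k^{N_l,l-1}(g_k)$, one writes $[\pi_k^l-\pi_{k}^{l-1}]^{N_l}(\varphi)-[\pi_k^l-\pi_{k}^{l-1}](\varphi)$ as a difference of two expressions of the form $\tfrac12 G^l/f^l+\tfrac12 G^a/f^a-G^{l-1}/f^{l-1}$: the first with $(G^\cdot,f^\cdot)=(\eta_k^{N_l,\cdot}(g_k\varphi),\eta_k^{N_l,\cdot}(g_k))$ and the second with $G^l=G^a=\eta_k^{l}(g_k\varphi)$, $f^l=f^a=\eta_k^{l}(g_k)$, $G^{l-1}=\eta_k^{l-1}(g_k\varphi)$, $f^{l-1}=\eta_k^{l-1}(g_k)$. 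Under (A\ref{ass:g}) the functions $g_k$ and $g_k\varphi$ lie in $\mathcal{B}_b(\mathsf{X})\cap\mathcal{C}_b^2(\mathsf{X},\mathbb{R})\cap\textrm{Lip}(\mathsf{X})$ and $g_k$ is bounded below, so every denominator encountered is uniformly bounded away from $0$ and above.

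Next I apply the algebraic identity of Lemma~\ref{lem:tech_lem_rat} to both expressions and subtract term by term, so that after Minkowski's inequality the quantity to bound splits into four differences that are structurally identical to the terms $T_8,\dots,T_{11}$ in the proof of Lemma~\ref{lem:ml_conv2}, but with the test function $g_{k-1}P(\varphi)$ there replaced by $g_k\varphi$ and the time index shifted from $k-1$ to $k$. The leading term, $\eta_k^{N_l,l-1}(g_k)^{-1}[\tfrac12\eta_k^{N_l,l}+\tfrac12\eta_k^{N_l,l,a}-\eta_k^{N_l,l-1}](g_k\varphi)$ minus its deterministic analogue, is handled by adding and subtracting $\eta_k^{l-1}(g_k)^{-1}[\tfrac12\eta_k^{N_l,l}+\tfrac12\eta_k^{N_l,l,a}-\eta_k^{N_l,l-1}](g_k\varphi)$: one piece is controlled directly by Lemma~\ref{lem:ml_conv2} applied to $g_k\varphi$, and the other by the weak-error estimate $\eta_k^l(g_k\varphi)-\eta_k^{l-1}(g_k\varphi)=\mathcal{O}(\Delta_l)$ (as in \cite[Lemma D.2]{mlpf}, the truncated Milstein scheme being first order in the weak sense) together with a standard $\mathcal{O}(N_l^{-1/2})$ particle-filter bound for $[\eta_k^{l-1}-\eta_k^{N_l,l-1}](g_k)$. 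The three remaining terms each carry a product of two ``small'' factors: one is a difference such as $[\eta_k^{N_l,l}-\eta_k^{N_l,l,a}](g_k\varphi)$ (bounded in $\mathbb{L}_p$ by Remark~\ref{rem:ml_conv1}) or $[\eta_k^{N_l,l-1}-\eta_k^{l-1}](g_k\varphi)$ (bounded by standard particle-filter results), combined with the other factor via Cauchy--Schwarz; wherever a factor of the form $[\eta_k^{N_l,l-1}-\eta_k^{N_l,l}](g_k)$ occurs it is first split into its centred part $\bigl([\eta_k^{N_l,l-1}-\eta_k^{N_l,l}]-[\eta_k^{l-1}-\eta_k^{l}]\bigr)(g_k)$ (controlled by Lemma~\ref{lem:ml_conv1}) and the deterministic part $[\eta_k^{l-1}-\eta_k^{l}](g_k)=\mathcal{O}(\Delta_l)$. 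Collecting all contributions and using $\Delta_l\le1$ to absorb the various powers of $\Delta_l$ into the three advertised terms yields the claim; the range $p\in[1,2)$ follows from $p\in[2,\infty)$ by Jensen's inequality.

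I expect the main difficulty to be exactly the one already met in the proof of Lemma~\ref{lem:ml_conv2} for the term $T_9$: the cross term containing $[\eta_k^{N_l,l}-\eta_k^{N_l,l,a}](g_k\varphi)$ times $[\eta_k^{N_l,l-1}-\eta_k^{N_l,l}](g_k)$. A crude Cauchy--Schwarz bound using Remark~\ref{rem:ml_conv1} on both factors only produces a rate of order $\Delta_l^{1/2}N_l^{-p/2}$, which is too slow; one must peel off the deterministic bias $[\eta_k^{l}-\eta_k^{l-1}](g_k)=\mathcal{O}(\Delta_l)$ to upgrade a $\Delta_l^{1/2}$ to a $\Delta_l$, while using the sharper $\mathbb{L}_p$ rate of Lemma~\ref{lem:ml_conv1} for the centred remainder, and then verify that the resulting product indeed matches $\Delta_l N_l^{-p/2}+\Delta_l^{1/2}N_l^{-p}+\Delta_l^{1/2-\varepsilon}N_l^{-p/2-1}$. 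A secondary, purely bookkeeping, point is keeping the $N_l$-powers straight across the many sub-terms so that none is slower in $N_l$ than the stated bound.
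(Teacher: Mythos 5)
Your proposal is correct and follows essentially the same route as the paper: the paper's (omitted) proof is precisely to write the filter increments in self-normalised form, apply the algebraic identity of Lemma~\ref{lem:tech_lem_rat} as was done for \eqref{eq:part_conv8}, and control the resulting $T_8$--$T_{11}$-type terms via Lemma~\ref{lem:ml_conv1}, Lemma~\ref{lem:ml_conv2}, Remark~\ref{rem:ml_conv1}, weak-error bounds and standard particle-filter estimates. Your treatment of the delicate cross term, splitting $[\eta_k^{N_l,l-1}-\eta_k^{N_l,l}](g_k)$ into its centred part plus the $\mathcal{O}(\Delta_l)$ deterministic bias, mirrors exactly the $T_{14}$/$T_{15}$ decomposition used for $T_9$ in the proof of Lemma~\ref{lem:ml_conv2}.
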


\begin{proof}
This follows from Lemmata \ref{lem:ml_conv1}, \ref{lem:ml_conv2}, Remark \ref{rem:ml_conv1} and a similar approach to control \eqref{eq:part_conv8} in the proof of Lemma \ref{lem:ml_conv2}; we omit the details.
\end{proof}

\subsection{Particle Convergence Proofs: Bias Bounds}\label{app:part_bias}

\begin{lem}\label{lem:bias1}
Assume (A\ref{ass:diff1}-\ref{ass:g}). Then for any $k\in\mathbb{N}$ there exists a $C<+\infty$ such that for any $(l,N_l,\varepsilon,\varphi)\in\mathbb{N}^2\times(0,1/2)\times\mathcal{B}_b(\mathsf{X})\cap\textrm{\emph{Lip}}(\mathsf{X})$
$$
\left|\mathbb{E}\left[[\eta_k^{N_l,l}-\eta_k^{N_l,l-1}](\varphi)-[\eta_k^l-\eta_k^{l-1}](\varphi)\right]\right| \leq \frac{C\overline{\|\varphi\|}}{N_l}
\left(\Delta_l^{\tfrac{1}{2}}
+ \frac{\Delta_l^{\tfrac{1}{2}-\varepsilon}}{N_l}
\right)^{\tfrac{1}{2}}.
$$
\end{lem}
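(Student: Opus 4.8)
The plan is to argue by induction on $k$, mirroring the structure of the proof of Lemma~\ref{lem:ml_conv1} but tracking the bias in place of the $\mathbb{L}_p$-error. The base case $k=1$ needs no work: the particles $X_1^{1:N_l,l}$ and $X_1^{1:N_l,l-1}$ produced in the initialisation of Algorithm~\ref{alg:coup_pf} are (conditionally) i.i.d.\ with the correct marginals $\eta_1^l=P^l(x_0,\cdot)$ and $\eta_1^{l-1}=P^{l-1}(x_0,\cdot)$, so $\mathbb{E}[[\eta_1^{N_l,l}-\eta_1^{N_l,l-1}](\varphi)]=[\eta_1^l-\eta_1^{l-1}](\varphi)$ and the left-hand side vanishes identically.

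For the induction step I would condition on $\mathcal{F}_{k-1}^l$ and use, exactly as in Lemma~\ref{lem:ml_conv1}, the identity $\mathbb{E}[[\eta_k^{N_l,l}-\eta_k^{N_l,l-1}](\varphi)\mid\mathcal{F}_{k-1}^l] = \eta_{k-1}^{N_l,l}(g_{k-1}P^l(\varphi))/\eta_{k-1}^{N_l,l}(g_{k-1}) - \eta_{k-1}^{N_l,l-1}(g_{k-1}P^{l-1}(\varphi))/\eta_{k-1}^{N_l,l-1}(g_{k-1})$, together with the fact that $[\eta_k^l-\eta_k^{l-1}](\varphi)$ is the same difference of ratios built from the true predictors $\eta_{k-1}^l,\eta_{k-1}^{l-1}$. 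Inserting the exact unit-time kernel $P$ splits the bias into two groups. The first group carries a factor $\max\{\|[P^l-P](\varphi)\|_\infty,\|[P^{l-1}-P](\varphi)\|_\infty\}=\mathcal{O}(\Delta_l\overline{\|\varphi\|})$ (standard weak-error bounds; the truncated Milstein scheme is first order, cf.\ \cite[Lemma D.2]{mlpf}) times the Monte-Carlo bias of a single-level filter estimate, $|\mathbb{E}[\eta_{k-1}^{N_l,\cdot}(\psi)/\eta_{k-1}^{N_l,\cdot}(g_{k-1})]-\eta_{k-1}^{\cdot}(\psi)/\eta_{k-1}^{\cdot}(g_{k-1})| = \mathcal{O}(\|\psi\|_\infty/N_l)$ (a standard particle-filter property; cf.\ \cite{delmoral1}), and is therefore $\mathcal{O}(\Delta_l/N_l)$, which is dominated by the claimed bound since $\Delta_l\le\Delta_l^{1/2}$. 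The second group is the ``pure-$P$'' term $\eta_{k-1}^{N_l,l}(g_{k-1}P(\varphi))/\eta_{k-1}^{N_l,l}(g_{k-1})-\eta_{k-1}^{N_l,l-1}(g_{k-1}P(\varphi))/\eta_{k-1}^{N_l,l-1}(g_{k-1})$ minus its true-predictor counterpart.

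To the pure-$P$ term I would apply the algebraic ratio identity \cite[Lemma C.5]{mlpf} (used for $T_5$ in the proof of Lemma~\ref{lem:ml_conv1}), which produces one ``leading'' summand proportional to $[\eta_{k-1}^{N_l,l}-\eta_{k-1}^{N_l,l-1}-\eta_{k-1}^l+\eta_{k-1}^{l-1}](g_{k-1}P(\varphi))/\eta_{k-1}^{N_l,l}(g_{k-1})$ and several ``product'' summands. In the leading summand I replace the random denominator by $1/\eta_{k-1}^l(g_{k-1})+(1/\eta_{k-1}^{N_l,l}(g_{k-1})-1/\eta_{k-1}^l(g_{k-1}))$: the deterministic part yields exactly the bias at time $k-1$ against the test function $g_{k-1}P(\varphi)$, which lies in $\mathcal{B}_b(\mathsf{X})\cap\textrm{Lip}(\mathsf{X})$ by the regularisation of the diffusion semigroup over unit time under (A\ref{ass:diff1}) (e.g.\ \cite[Eq.(2.6)]{delm}) and the bounds on $g_{k-1}$, so the induction hypothesis applies; the residual part, and likewise every product summand, factorises (after the uniform bounds on $g_{k-1}$) as a \emph{coupled} increment such as $[\eta_{k-1}^{N_l,l}-\eta_{k-1}^{N_l,l-1}-\eta_{k-1}^l+\eta_{k-1}^{l-1}](\psi)$ or $[\eta_{k-1}^{N_l,l}-\eta_{k-1}^{N_l,l-1}](\psi)$ multiplied by an \emph{independently fluctuating} term such as $[\eta_{k-1}^{N_l,l}-\eta_{k-1}^l](g_{k-1})$ or $1/\eta_{k-1}^{N_l,l}(g_{k-1})-1/\eta_{k-1}^l(g_{k-1})$. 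Bounding the first factor in $\mathbb{L}_2$ by Lemma~\ref{lem:ml_conv1} (with $p=2$; size $\lesssim\overline{\|\varphi\|}(\Delta_l^{1/2}/N_l+\Delta_l^{1/2-\varepsilon}/N_l^{2})^{1/2}$) and the second by the standard $\mathcal{O}(N_l^{-1/2})$ particle-filter rate (e.g.\ \cite[Proposition C.6]{mlpf}), and using Cauchy--Schwarz, bounds all these summands by $C\overline{\|\varphi\|}N_l^{-1}(\Delta_l^{1/2}+\Delta_l^{1/2-\varepsilon}/N_l)^{1/2}$. Collecting everything gives a recursion of the form $\mathrm{bias}_k\le C\,\mathrm{bias}_{k-1}+C\overline{\|\varphi\|}N_l^{-1}(\Delta_l^{1/2}+\Delta_l^{1/2-\varepsilon}/N_l)^{1/2}$, which with $\mathrm{bias}_1=0$ and the finiteness of $k$ closes the induction.

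The main obstacle is the bookkeeping in the last step: one has to verify that \emph{every} remainder term of the ratio expansion splits as (coupled factor)$\times$(independently fluctuating factor), so that none degrades to the useless rate $\mathcal{O}(N_l^{-1})$ and none requires the unattainable $\mathcal{O}(\Delta_l^{1/2}/N_l)$ --- it is precisely this mixed structure that turns the $\mathbb{L}_2$-bound $\Delta_l^{1/4}/N_l^{1/2}$ of Lemma~\ref{lem:ml_conv1} together with the $N_l^{-1/2}$ fluctuation into the target $\Delta_l^{1/4}/N_l$. A secondary point requiring care is keeping all the intermediate test functions (products of the $g_j$ with iterated applications of $P$ to $\varphi$) inside $\mathcal{B}_b(\mathsf{X})\cap\textrm{Lip}(\mathsf{X})$ so that the induction hypothesis remains applicable throughout.
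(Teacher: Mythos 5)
Your overall architecture is the same as the paper's: induction in $k$ with zero bias at $k=1$, conditioning on $\mathcal{F}_{k-1}^l$, inserting the exact kernel $P$ to split off the weak-error terms (handled by the $\mathcal{O}(\Delta_l)$ first-order weak rate times the $\mathcal{O}(1/N_l)$ bias of the marginal particle filter), then applying the ratio identity \cite[Lemma C.5]{mlpf} to the pure-$P$ part, using the induction hypothesis on the leading summand with deterministic denominator and Cauchy--Schwarz (via \cite[Proposition C.6]{mlpf} and Lemma \ref{lem:ml_conv1} with $p=2$) for the residual with the denominator difference; that is exactly the paper's treatment of its terms $T_1$--$T_2$, $T_{14}$ and $T_{15}$.

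The gap is in your blanket claim that \emph{every} product summand of the ratio expansion factorises as (coupled increment)$\times$(independent fluctuation) and can be bounded by Lemma \ref{lem:ml_conv1} in $\mathbb{L}_2$ times the $\mathcal{O}(N_l^{-1/2})$ particle-filter rate. Several of the remainders (the analogues of $T_{10}$--$T_{13}$ in the paper's proof) have instead the structure (single-level fluctuation such as $[\eta_{k-1}^{N_l,l}-\eta_{k-1}^{l}](g_{k-1})$)$\times$(\emph{deterministic} level difference $[\eta_{k-1}^{l}-\eta_{k-1}^{l-1}](\cdot)=\mathcal{O}(\Delta_l)$ by \cite[Lemma D.2]{mlpf}). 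For these, your $\mathbb{L}_2\times\mathbb{L}_2$ Cauchy--Schwarz recipe yields only $\mathcal{O}(\Delta_l N_l^{-1/2})$, which does not imply the claimed bound $C\overline{\|\varphi\|}N_l^{-1}(\Delta_l^{1/2}+\Delta_l^{1/2-\varepsilon}/N_l)^{1/2}$ uniformly in $(l,N_l)$ (take $N_l\gg\Delta_l^{-3/2}$ with $l$ fixed). Relatedly, quoting Lemma \ref{lem:ml_conv1} for the \emph{uncentered} difference $[\eta_{k-1}^{N_l,l}-\eta_{k-1}^{N_l,l-1}](\psi)$ is not correct: that lemma controls only the centered quantity, and the omitted deterministic component of size $\mathcal{O}(\Delta_l)$ is precisely what produces the problematic $\Delta_l N_l^{-1/2}$ piece. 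The fix, which is what the paper does (its terms $T_{16}$ and $T_{17}$), is to keep the deterministic $\mathcal{O}(\Delta_l)$ factor explicit and then obtain a full $\mathcal{O}(1/N_l)$ from the stochastic part, either by expanding the random denominator to extract a second fluctuation and applying Cauchy--Schwarz to the product of two $\mathcal{O}(N_l^{-1/2})$ fluctuations, or by invoking the $\mathcal{O}(1/N_l)$ bias property of the single-level filter (which you already used for the weak-error group); this gives $\mathcal{O}(\Delta_l/N_l)$ for these summands and closes the induction.
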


\begin{proof}
The proof is by induction, with the case $k=1$ trivial as there is no bias, so we proceed to the induction step. Following the proof of Lemma \ref{lem:ml_conv1} we have the decomposition
$$
\left|\mathbb{E}\left[[\eta_k^{N_l,l}-\eta_k^{N_l,l-1}](\varphi)-[\eta_k^l-\eta_k^{l-1}](\varphi)\right]\right| \leq T_1+ T_2 + T_3
$$
where
\begin{eqnarray*}
T_1 & = & \left|\mathbb{E}\left[\frac{\eta_{k-1}^{N_l,l}(g_{k-1}[P^l-P](\varphi))}{\eta_{k-1}^{N_l,l}(g_{k-1})} - 
\frac{\eta_{k-1}^{l}(g_{k-1}[P^l-P](\varphi))}{\eta_{k-1}^{l}(g_{k-1})}\right]\right| \\
T_2 & = & \left|\mathbb{E}\left[\frac{\eta_{k-1}^{N_l,l-1}(g_{k-1}[P^{l-1}-P](\varphi))}{\eta_{k-1}^{N_l,l-1}(g_{k-1})} - 
\frac{\eta_{k-1}^{l-1}(g_{k-1}[P^{l-1}-P](\varphi))}{\eta_{k-1}^{l-1}(g_{k-1})}\right]\right| \\
T_3 & = & \Bigg|\mathbb{E}\Bigg[
\left\{\frac{\eta_{k-1}^{N_l,l}(g_{k-1}P(\varphi))}{\eta_{k-1}^{N_l,l}(g_{k-1})} - 
\frac{\eta_{k-1}^{N_l,l-1}(g_{k-1}P(\varphi))}{\eta_{k-1}^{N_l,l-1}(g_{k-1})}\right\} -
\Bigg\{\frac{\eta_{k-1}^{l}(g_{k-1}P(\varphi))}{\eta_{k-1}^{l}(g_{k-1})} - \\ & &
\frac{\eta_{k-1}^{l-1}(g_{k-1}P(\varphi))}{\eta_{k-1}^{l-1}(g_{k-1})}\Bigg\}\Bigg]\Bigg|.
\end{eqnarray*}

$T_1$ and $T_2$ can be bounded using almost the same calculations, so we consider only $T_1$.
For $T_1$ this can easily be upper-bounded by $\sum_{j=1}^4 T_{j+3}$ where
\begin{eqnarray*}
T_4 & = &  \left|\mathbb{E}\left[
\Bigg\{\frac{1}{\eta_{k-1}^{N_l,l}(g_{k-1})}-\frac{1}{\eta_{k-1}^{l}(g_{k-1})}\Bigg\}
[\eta_{k-1}^{N_l,l}-\eta_{k-1}^{l}](g_{k-1}[P^l-P](\varphi))
\right]\right|\\
T_5 & = &  \left|\mathbb{E}\left[\frac{1}{\eta_{k-1}^{l}(g_{k-1})}
[\eta_{k-1}^{N_l,l}-\eta_{k-1}^{l}](g_{k-1}[P^l-P](\varphi))
\right]\right|\\
T_6 & = &  \left|\mathbb{E}\left[\eta_{k-1}^{l}(g_{k-1}[P^l-P](\varphi))
\Bigg\{\frac{1}{\eta_{k-1}^{N_l,l}(g_{k-1})\eta_{k-1}^{l}(g_{k-1})}-\frac{1}{\eta_{k-1}^{l}(g_{k-1})^2}\Bigg\}
[\eta_{k-1}^{l}-\eta_{k-1}^{N_l,l}](g_{k-1})
\right]\right|\\
T_7 & = &  \left|\mathbb{E}\left[
\frac{\eta_{k-1}^{l}(g_{k-1}[P^l-P](\varphi))}{\eta_{k-1}^{l}(g_{k-1})^2}[\eta_{k-1}^{l}-\eta_{k-1}^{N_l,l}](g_{k-1})
\right]\right|.
\end{eqnarray*}
For both $T_4$ and $T_6$, one can use the Cauchy-Schwarz inequality, along with \cite[Proposition C.6]{mlpf}, the lower-bound on $g_{k-1}$ and weak-error results for diffusions to show that
$$
\max\{T_4,T_6\} \leq \frac{C\overline{\|\varphi\|}\Delta_l}{N_l}.
$$
For $T_5$ and $T_7$ one can use standard bias bounds for particle filters (see e.g.~the proof of \cite[Lemma C.3]{mlpf}), the lower-bound on $g_{k-1}$ and weak-error results for diffusions to yield
$$
\max\{T_5,T_7\} \leq \frac{C\overline{\|\varphi\|}\Delta_l}{N_l}
$$
and hence
\begin{equation}\label{eq:part_conv_bias1}
\max\{T_1,T_2\} \leq \frac{C\overline{\|\varphi\|}\Delta_l}{N_l}.
\end{equation}

For $T_3$ using \cite[Lemma C.5]{mlpf} we have the upper-bound $T_3\leq\sum_{j=1}^6 T_{j+7}$ where
\begin{eqnarray*}
T_8 & = &  \left|\mathbb{E}\left[\frac{1}{\eta_{k-1}^{N_l,l}(g_{k-1})}
\left\{
[\eta_{k-1}^{N_l,l}-\eta_{k-1}^{N_l,l-1}](g_{k-1}P(\varphi)) -
[\eta_{k-1}^{l}-\eta_{k-1}^{l-1}](g_{k-1}P(\varphi))
\right\}\right]\right| \\
T_9 & = & \left|\mathbb{E}\left[\frac{\eta_{k-1}^{N_l,l-1}(g_{k-1}P(\varphi))}{\eta_{k-1}^{N_l,l}(g_{k-1})
\eta_{k-1}^{N_l,l-1}(g_{k-1})}
\left\{
[\eta_{k-1}^{N_l,l}-\eta_{k-1}^{N_l,l-1}](g_{k-1}) -
[\eta_{k-1}^{l}-\eta_{k-1}^{l-1}](g_{k-1})
\right\}\right]\right|\\
T_{10} & = & \left|\mathbb{E}\left[\frac{1}{\eta_{k-1}^{N_l,l}(g_{k-1})\eta_{k-1}^{l}(g_{k-1})}
\{[\eta_{k-1}^{l}-\eta_{k-1}^{N_l,l}](g_{k-1})\}\{[\eta_{k-1}^{l}-\eta_{k-1}^{l-1}](g_{k-1}P(\varphi))\}
\right]\right|\\
T_{11} & = & \left|\mathbb{E}\left[\frac{1}{\eta_{k-1}^{N_l,l}(g_{k-1})\eta_{k-1}^{N_l,l-1}(g_{k-1})}
\{[\eta_{k-1}^{N_l,l-1}-\eta_{k-1}^{l-1}](g_{k-1}P(\varphi))\}\{
[\eta_{k-1}^{l}-\eta_{k-1}^{l-1}](g_{k-1})
\}\right]\right|\\
T_{12} & = & \left|\mathbb{E}\left[\frac{\eta_{k-1}^{l-1}(g_{k-1}P(\varphi))}{
\eta_{k-1}^{l}(g_{k-1})\eta_{k-1}^{N_l,l-1}(g_{k-1})\eta_{k-1}^{l-1}(g_{k-1})}
\{[\eta_{k-1}^{N_l,l-1}-\eta_{k-1}^{l-1}](g_{k-1})\}
\{[\eta_{k-1}^{l}-\eta_{k-1}^{l-1}](g_{k-1})\}
\right]\right|\\
T_{13} & = & \left|\mathbb{E}\left[\frac{\eta_{k-1}^{l-1}(g_{k-1}P(\varphi))}{\eta_{k-1}^{N_l,l}(g_{k-1})
\eta_{k-1}^{l}(g_{k-1})\eta_{k-1}^{N_l,l-1}(g_{k-1})}
\{[\eta_{k-1}^{N_l,l}-\eta_{k-1}^{l}](g_{k-1})\}
\{[\eta_{k-1}^{l}-\eta_{k-1}^{l-1}](g_{k-1})\}
\right]\right|.
\end{eqnarray*}
Since  $T_8$ and $T_9$ can be bounded using similar approaches, we consider only $T_8$. Similarly
$T_{10},\dots,T_{13}$ can be bounded in almost the same way, so we consider only $T_{10}$.
For $T_8$ one has $T_8\leq T_{14}+T_{15}$ where
\begin{eqnarray*}
T_{14} & = &  \Bigg|\mathbb{E}\Bigg[\Bigg\{\frac{1}{\eta_{k-1}^{N_l,l}(g_{k-1})}-\frac{1}{\eta_{k-1}^{l}(g_{k-1})}\Bigg\}
\Bigg\{
[\eta_{k-1}^{N_l,l}-\eta_{k-1}^{N_l,l-1}](g_{k-1}P(\varphi)) - 
[\eta_{k-1}^{l}-\eta_{k-1}^{l-1}](g_{k-1}P(\varphi))
\Bigg\}\Bigg]\Bigg| \\
T_{15} & = &  \left|\mathbb{E}\left[\frac{1}{\eta_{k-1}^{l}(g_{k-1})}
\left\{
[\eta_{k-1}^{N_l,l}-\eta_{k-1}^{N_l,l-1}](g_{k-1}P(\varphi)) -
[\eta_{k-1}^{l}-\eta_{k-1}^{l-1}](g_{k-1}P(\varphi))
\right\}\right]\right|.
\end{eqnarray*}
For $T_{14}$ one can use Cauchy-Schwarz, the lower-bound on $g_{k-1}$, \cite[Proposition C.6]{mlpf}, 
and Lemma  \ref{lem:ml_conv1}. For $T_{15}$ we can apply the lower-bound on $g_{k-1}$ and the induction hypothesis. These two arguments give that
$$
\max\{T_{14},T_{15}\} \leq \frac{C\overline{\|\varphi\|}}{N_l}
\left(\Delta_l^{\tfrac{1}{2}}
+ \frac{\Delta_l^{\tfrac{1}{2}-\varepsilon}}{N_l}
\right)^{\tfrac{1}{2}}
$$
and hence that
$$
T_8 \leq \frac{C\overline{\|\varphi\|}}{N_l}
\left(\Delta_l^{\tfrac{1}{2}}
+ \frac{\Delta_l^{\tfrac{1}{2}-\varepsilon}}{N_l}
\right)^{\tfrac{1}{2}}.
$$
For $T_{10}$ we have the upper-bound $T_{10}\leq T_{16}+T_{17}$ where
\begin{eqnarray*}
T_{16} & = & \Bigg|\mathbb{E}\Bigg[\Bigg\{\frac{1}{\eta_{k-1}^{N_l,l}(g_{k-1})\eta_{k-1}^{l}(g_{k-1})}-\frac{1}{\eta_{k-1}^{l}(g_{k-1})^2}\Bigg\}
\{[\eta_{k-1}^{l}-\eta_{k-1}^{N_l,l}](g_{k-1})\}\times\\ & &\{[\eta_{k-1}^{l}-\eta_{k-1}^{l-1}](g_{k-1}P(\varphi))\}
\Bigg]\Bigg|\\
T_{17} & = & \left|\mathbb{E}\left[\frac{1}{\eta_{k-1}^{l}(g_{k-1})^2}
\{[\eta_{k-1}^{l}-\eta_{k-1}^{N_l,l}](g_{k-1})\}\{[\eta_{k-1}^{l}-\eta_{k-1}^{l-1}](g_{k-1}P(\varphi))\}
\right]\right|.
\end{eqnarray*}
For $T_{15}$ one can use Cauchy-Schwarz, the lower-bound on $g_{k-1}$, \cite[Proposition C.6]{mlpf} (twice) along with \cite[Lemma D.2]{mlpf}. For $T_{16}$ we can apply the lower-bound on $g_{k-1}$, \cite[Lemma D.2]{mlpf} and bias results for particle filters. These two arguments give
$$
\max\{T_{15},T_{16}\} \leq \frac{C\overline{\|\varphi\|}\Delta_l}{N_l}
$$
and thus
$$
T_9 \leq \frac{C\overline{\|\varphi\|}\Delta_l}{N_l}.
$$
Thus we have proved that
\begin{equation} \label{eq:part_conv_bias2}
T_2 \leq \frac{C\overline{\|\varphi\|}}{N_l}
\left(\Delta_l^{\tfrac{1}{2}}
+ \frac{\Delta_l^{\tfrac{1}{2}-\varepsilon}}{N_l}
\right)^{\tfrac{1}{2}}.
\end{equation}
The proof can be completed by combining \eqref{eq:part_conv_bias1} and \eqref{eq:part_conv_bias2}.
\end{proof}

\begin{rem}\label{rem:bias1}
Using the approach to the proof of Lemma \ref{lem:bias1} one can also prove the following result. Assume (A\ref{ass:diff1}-\ref{ass:g}). Then for any $k\in\mathbb{N}$ there exists a $C<+\infty$ such that for any $(l,N_l,\varepsilon,\varphi)\in\mathbb{N}^2\times(0,1/2)\times\mathcal{B}_b(\mathsf{X})\cap\textrm{\emph{Lip}}(\mathsf{X})$
$$
\left|\mathbb{E}\left[[\eta_k^{N_l,l}-\eta_k^{N_l,a}](\varphi)\right]\right| \leq \frac{C\overline{\|\varphi\|}}{N_l}
\left(\Delta_l^{\tfrac{1}{2}}
+ \frac{\Delta_l^{\tfrac{1}{2}-\varepsilon}}{N_l}
\right)^{\tfrac{1}{2}}.
$$
\end{rem}

%

\begin{lem}\label{lem:bias2}
Assume (A\ref{ass:diff1}-\ref{ass:g}). Then for any $(k,\varphi)\in\mathbb{N}\times
\mathcal{B}_b(\mathsf{X})\cap\mathcal{C}_b^2(\mathsf{X},\mathbb{R})$ there exists a $C<+\infty$ such that for any $(l,N_l,\varepsilon)\in\mathbb{N}^2\times(0,1/2)$
$$
\left|\mathbb{E}\left[[\tfrac{1}{2}\eta_k^{N_l,l}+\tfrac{1}{2}\eta_k^{N,l,a}-\eta_k^{N_l,l-1}](\varphi)-[\eta_k^l-\eta_k^{l-1}](\varphi)\right]\right| \leq \frac{C}{N_l}\Bigg\{\left(\Delta_l
+ \frac{\Delta_l^{\tfrac{1}{2}-\varepsilon}}{N_l}
\right)^{\tfrac{1}{2}} +
\left(\Delta_l^{\tfrac{1}{2}}
+ \frac{\Delta_l^{\tfrac{1}{2}-\varepsilon}}{N_l}
\right)
\Bigg\}.
$$
\end{lem}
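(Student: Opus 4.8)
The plan is to prove the bound by induction on $k$, mirroring the bias proof of Lemma~\ref{lem:bias1} but carrying the three-term antithetic architecture of Lemma~\ref{lem:ml_conv2}. The base case $k=1$ is immediate: at the first time step the particle systems are i.i.d.\ draws from the kernels $P^l(x_0,\cdot)$, $P^{l-1}(x_0,\cdot)$ (the antithetic level-$l$ particles being marginally $P^l(x_0,\cdot)$ as well), so $\mathbb{E}\big([\tfrac12\eta_1^{N_l,l}+\tfrac12\eta_1^{N,l,a}-\eta_1^{N_l,l-1}](\varphi)\big)=[P^l-P^{l-1}](x_0,\varphi)=[\eta_1^l-\eta_1^{l-1}](\varphi)$ and the bias vanishes. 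For the induction step, since the fluctuation relative to $\mathcal{F}_{k-1}$ has zero mean, the bias equals $\mathbb{E}$ of the difference between the conditional expectation $\tfrac12\tfrac{\eta_{k-1}^{N_l,l}(g_{k-1}P^l(\varphi))}{\eta_{k-1}^{N_l,l}(g_{k-1})}+\tfrac12\tfrac{\eta_{k-1}^{N_l,l,a}(g_{k-1}P^l(\varphi))}{\eta_{k-1}^{N_l,l,a}(g_{k-1})}-\tfrac{\eta_{k-1}^{N_l,l-1}(g_{k-1}P^{l-1}(\varphi))}{\eta_{k-1}^{N_l,l-1}(g_{k-1})}$ and its large-$N$ limit. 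Splitting $P^l=(P^l-P)+P$ and $P^{l-1}=(P^{l-1}-P)+P$ produces, as in Lemmata~\ref{lem:bias1} and~\ref{lem:ml_conv2}, a discretization group (the $(P^l-P)$ and $(P^{l-1}-P)$ pieces) and a limit-kernel group (the $P$ piece).

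For the discretization group I would repeat verbatim the argument used for $T_1,T_2$ in Lemma~\ref{lem:bias1}: expand each ratio via the elementary decomposition, apply Cauchy--Schwarz, and combine \cite[Proposition C.6]{mlpf}, the standard $\mathcal{O}(N_l^{-1})$ particle-filter bias bound, and the weak (first order) error of the truncated Milstein scheme; this yields $\mathcal{O}(\overline{\|\varphi\|}\,\Delta_l/N_l)$, which is dominated by the claimed bound. For the limit-kernel group I would apply the algebraic identity of Lemma~\ref{lem:tech_lem_rat} with $G^l=\eta_{k-1}^{N_l,l}(g_{k-1}P(\varphi))$, $f^l=\eta_{k-1}^{N_l,l}(g_{k-1})$, and the analogous substitutions for the antithetic, coarse and limiting measures, and subtract. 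The leading term is $\eta_{k-1}^{N_l,l-1}(g_{k-1})^{-1}[\tfrac12\eta_{k-1}^{N_l,l}+\tfrac12\eta_{k-1}^{N_l,l,a}-\eta_{k-1}^{N_l,l-1}](g_{k-1}P(\varphi))$ minus its limit; using $g_{k-1}P(\varphi)\in\mathcal{C}_b^2(\mathsf{X},\mathbb{R})$ (smoothing property of the exact diffusion semigroup, \cite[Corollary 2.2.8]{stroock}) and peeling off the difference of reciprocals (bounded by Cauchy--Schwarz, \cite[Lemma D.2]{mlpf} and particle-filter estimates, giving an $\mathcal{O}(\Delta_l/N_l)$ remainder), this term is controlled by the induction hypothesis, and it is precisely this contribution that produces the target rate $\tfrac{C}{N_l}\{(\Delta_l+\Delta_l^{1/2-\varepsilon}/N_l)^{1/2}+(\Delta_l^{1/2}+\Delta_l^{1/2-\varepsilon}/N_l)\}$.

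The remaining terms from Lemma~\ref{lem:tech_lem_rat} are products of fluctuation quantities. Those carrying a factor $[\eta_{k-1}^{N_l,l}-\eta_{k-1}^{N_l,l,a}](g_{k-1}P(\varphi))$ I would handle by Cauchy--Schwarz together with Remark~\ref{rem:ml_conv1} (this factor has $\mathbb{L}_2$-size $(\Delta_l^{1/2}/N_l+\Delta_l^{1/2-\varepsilon}/N_l^2)^{1/2}$) against an ordinary $\mathcal{O}(N_l^{-1/2})$ particle-filter fluctuation, or, where a companion fluctuation is absent, directly via the bias estimate of Remark~\ref{rem:bias1}; those carrying a factor $[\tfrac12\eta_{k-1}^{N_l,l}+\tfrac12\eta_{k-1}^{N_l,l,a}-\eta_{k-1}^{N_l,l-1}](g_{k-1})$ I would control by Cauchy--Schwarz and Lemma~\ref{lem:ml_conv2}; and those carrying only first-order differences $[\eta_{k-1}^{N_l,l-1}-\eta_{k-1}^{N_l,l}](g_{k-1})$ by Cauchy--Schwarz, Lemma~\ref{lem:ml_conv1} and \cite[Lemma D.2]{mlpf}. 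In each case one obtains $\mathcal{O}(N_l^{-1}(\Delta_l+\Delta_l^{1/2-\varepsilon}/N_l)^{1/2})$ or smaller. Combining the two groups and invoking the induction hypothesis closes the step.

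The main obstacle I anticipate is the bookkeeping in the last group: Lemma~\ref{lem:tech_lem_rat} interleaves the antithetic difference $G^l-G^a$, the second-order object $\tfrac12 f^l+\tfrac12 f^a-f^{l-1}$, and ordinary first-order fluctuations, so one must always pair the small (antithetic or second-order) factor against a plain $N_l^{-1/2}$ factor and never pay the $\Delta_l^{-\varepsilon}$ loss twice, exactly as in the $\mathbb{L}_p$ proof of Lemma~\ref{lem:ml_conv2}. The genuinely new point, compared with that $\mathbb{L}_p$ bound, is that in several of these products the expectation can be estimated directly by Cauchy--Schwarz using the already-established $\mathbb{L}_2$ rates, which supplies the extra $N_l^{-1/2}$ and hence the $N_l^{-1}$ prefactor in the statement.
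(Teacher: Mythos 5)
Your overall architecture is the same as the paper's: induction in $k$, the split $P^{l}=(P^{l}-P)+P$, $P^{l-1}=(P^{l-1}-P)+P$ so that the discretization pieces are handled exactly as $T_1,T_2$ in Lemma \ref{lem:bias1} (giving $C\Delta_l/N_l$), the identity of Lemma \ref{lem:tech_lem_rat} for the exact-kernel group, the induction hypothesis (after peeling off differences of reciprocals) for the leading term, and Remark \ref{rem:bias1} for the term where the antithetic difference multiplies only the deterministic factor $[\eta_{k-1}^{l-1}-\eta_{k-1}^{l}](g_{k-1})=\mathcal{O}(\Delta_l)$. All of that matches the paper's proof.

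There is, however, a genuine problem in your treatment of the critical cross terms produced by Lemma \ref{lem:tech_lem_rat}, namely those of the form $[\eta_{k-1}^{N_l,l}-\eta_{k-1}^{N_l,l,a}](g_{k-1}P(\varphi))\times[\eta_{k-1}^{N_l,l-1}-\eta_{k-1}^{N_l,l}](g_{k-1})$. You propose to Cauchy--Schwarz the antithetic factor (of $\mathbb{L}_2$-size $(\Delta_l^{1/2}/N_l+\Delta_l^{1/2-\varepsilon}/N_l^2)^{1/2}$ by Remark \ref{rem:ml_conv1}) against ``an ordinary $\mathcal{O}(N_l^{-1/2})$ particle-filter fluctuation''. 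That pairing yields $N_l^{-1}\big(\Delta_l^{1/2}+\Delta_l^{1/2-\varepsilon}/N_l\big)^{1/2}\approx \Delta_l^{1/4}/N_l$, not the $N_l^{-1}(\Delta_l+\Delta_l^{1/2-\varepsilon}/N_l)^{1/2}$ you assert, and it is \emph{not} within the stated bound (for instance with $N_l\asymp\Delta_l^{-1}$ one compares $\Delta_l^{1/4}$ against $\Delta_l^{1/2}$ up to lower-order terms). The correct bookkeeping, which is what the paper does, is to pair the antithetic factor against the \emph{centered} coupled difference $[\eta_{k-1}^{N_l,l-1}-\eta_{k-1}^{N_l,l}](g_{k-1})-[\eta_{k-1}^{l-1}-\eta_{k-1}^{l}](g_{k-1})$, controlled by Lemma \ref{lem:ml_conv1} and itself of size $(\Delta_l^{1/2}/N_l+\Delta_l^{1/2-\varepsilon}/N_l^2)^{1/2}$; the product of the two small factors gives $N_l^{-1}(\Delta_l^{1/2}+\Delta_l^{1/2-\varepsilon}/N_l)$, which is exactly the second bracket in the statement, while the deterministic remainder $\mathcal{O}(\Delta_l)$ is handled via Remark \ref{rem:bias1} (or together with an extra plain fluctuation and \cite[Proposition C.6]{mlpf}). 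Your closing heuristic --- ``always pair the small (antithetic or second-order) factor against a plain $N_l^{-1/2}$ factor'' --- is the opposite of what is required; in Lemma \ref{lem:ml_conv2} (terms $T_{14},T_{15}$ there) and in the present bias proof the small factor must be paired either with another small (coupled or antithetic) factor or with an explicit $\Delta_l$, never with a bare $N_l^{-1/2}$ fluctuation alone. With that correction (which your own remark about handling first-order differences via Lemma \ref{lem:ml_conv1} and \cite[Lemma D.2]{mlpf} already points towards), the argument closes as in the paper; as written, the estimate for these cross terms does not deliver the claimed rate.
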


\begin{proof}
The proof is by induction, with the case $k=1$ trivial as there is no bias, so we proceed to the induction step. Following the proof of Lemma \ref{lem:ml_conv2} we have the decomposition
$$
\left|\mathbb{E}\left[[\tfrac{1}{2}\eta_k^{N_l,l}+\tfrac{1}{2}\eta_k^{N,l,a}-\eta_k^{N_l,l-1}](\varphi)-[\eta_k^l-\eta_k^{l-1}](\varphi)\right]\right| \leq  T_1 + T_2
$$
where
\begin{eqnarray}
T_1 & = & \Bigg|\mathbb{E}\Bigg[
\Bigg\{\frac{\tfrac{1}{2}\eta_{k-1}^{N_l,l}(g_{k-1}[P^l-P](\varphi))}{\eta_{k-1}^{N_l,l}(g_{k-1})} + 
\frac{\tfrac{1}{2}\eta_{k-1}^{N_l,l,a}(g_{k-1}[P^l-P](\varphi))}{\eta_{k-1}^{N_l,l,a}(g_{k-1})}  - 
\frac{\eta_{k-1}^{N_l,l-1}(g_{k-1}[P^{l-1}-P](\varphi))}{\eta_{k-1}^{N_l,l-1}(g_{k-1})} 
\Bigg\}
- \nonumber\\ & &
\Bigg\{
\frac{\tfrac{1}{2}\eta_{k-1}^{l}(g_{k-1}[P^l-P](\varphi))}{\eta_{k-1}^{l}(g_{k-1})} + 
\frac{\tfrac{1}{2}\eta_{k-1}^{l}(g_{k-1}[P^l-P](\varphi))}{\eta_{k-1}^{l}(g_{k-1})}  - 
\frac{\eta_{k-1}^{l-1}(g_{k-1}[P^{l-1}-P](\varphi))}{\eta_{k-1}^{l-1}(g_{k-1})} 
\Bigg\}
\Bigg]\Bigg| \nonumber\\
T_2 & = & \Bigg|\mathbb{E}\Bigg[
\Bigg\{\frac{\tfrac{1}{2}\eta_{k-1}^{N_l,l}(g_{k-1}P(\varphi))}{\eta_{k-1}^{N_l,l}(g_{k-1})} + 
\frac{\tfrac{1}{2}\eta_{k-1}^{N_l,l,a}(g_{k-1}P(\varphi))}{\eta_{k-1}^{N_l,l,a}(g_{k-1})}  - 
\frac{\eta_{k-1}^{N_l,l-1}(g_{k-1}P(\varphi))}{\eta_{k-1}^{N_l,l-1}(g_{k-1})} 
\Bigg\}
- \nonumber\\ & &
\Bigg\{
\frac{\tfrac{1}{2}\eta_{k-1}^{l}(g_{k-1}P(\varphi))}{\eta_{k-1}^{l}(g_{k-1})} + 
\frac{\tfrac{1}{2}\eta_{k-1}^{l}(g_{k-1}P(\varphi))}{\eta_{k-1}^{l}(g_{k-1})}  - 
\frac{\eta_{k-1}^{l-1}(g_{k-1}P(\varphi))}{\eta_{k-1}^{l-1}(g_{k-1})} 
\Bigg\}\Bigg]\Bigg|.\label{eq:part_conv_bias4} 
\end{eqnarray}
For $T_1$ one can match the empirical and limit terms across the $l,a,l-1$ and adopt the same proof approach as used for $T_1$ in the proof of Lemma \ref{lem:bias1}; since the proofs will be repeated, we omit them and remark only that
\begin{equation}\label{eq:part_conv_bias3}
T_1 \leq \frac{C\Delta_l}{N_l}.
\end{equation}

For $T_2$ using Lemma \ref{lem:tech_lem_rat} and the triangular inequality we have that $T_2\leq\sum_{j=1}^4 T_{j+2}$ where
\begin{eqnarray*}
T_3 & = & \Bigg|\mathbb{E}\Bigg[\frac{1}{\eta_{k-1}^{N_l,l-1}(g_{k-1})}
[\tfrac{1}{2}\eta_{k-1}^{N_l,l}+\tfrac{1}{2}\eta_{k-1}^{N_l,l,a}-\eta_{k-1}^{N_l,l-1}](g_{k-1}P(\varphi)) -
\frac{1}{\eta_{k-1}^{l-1}(g_{k-1})}
[\tfrac{1}{2}\eta_{k-1}^{l}+\\ & & \tfrac{1}{2}\eta_{k-1}^{l}-\eta_{k-1}^{l-1}](g_{k-1}P(\varphi))\Bigg]\Bigg|\\
T_4 & = &  \Bigg|\mathbb{E}\Bigg[\frac{1}{\eta_{k-1}^{N_l,l}(g_{k-1})\eta_{k-1}^{N_l,l-1}(g_{k-1})}\tfrac{1}{2}
\{[\eta_{k-1}^{N_l,l}-\eta_{k-1}^{N_l,l,a}](g_{k-1}P(\varphi))\}\{[\eta_{k-1}^{N_l,l-1}-\eta_{k-1}^{N_l,l}](g_{k-1})\}
\Bigg]\Bigg| 
\end{eqnarray*}
\begin{eqnarray*}
T_{5} & = & \Bigg|\mathbb{E}\Bigg[
\frac{\tfrac{1}{2}\eta_{k-1}^{N_l,l,a}(g_{k-1}P(\varphi))}{\eta_{k-1}^{N_l,l,a}(g_{k-1})\eta_{k-1}^{N_l,l}(g_{k-1})\eta_{k-1}^{N_l,l-1}](g_{k-1})}
\{[\eta_{k-1}^{N_l,l,a}-\eta_{k-1}^{N_l,l}](g_{k-1})\}\{[\eta_{k-1}^{N_l,l-1}-\eta_{k-1}^{N_l,l}](g_{k-1})\}
\Bigg]\Bigg|\\
T_{6} & = & \Bigg|\mathbb{E}\Bigg[
\frac{\eta_{k-1}^{N_l,l,a}(g_{k-1}P(\varphi))}{\eta_{k-1}^{N_l,l,a}(g_{k-1})\eta_{k-1}^{N_l,l-1}(g_{k-1})}
[\tfrac{1}{2}\eta_{k-1}^{N_l,l}+\tfrac{1}{2}\eta_{k-1}^{N_l,l,a}-\eta_{k-1}^{N_l,l-1}](g_{k-1}) - 
\frac{\eta_{k-1}^{l}(g_{k-1}P(\varphi))}{\eta_{k-1}^{l}(g_{k-1})\eta_{k-1}^{l-1}(g_{k-1})}\times \\ & &
[\tfrac{1}{2}\eta_{k-1}^{l}+\tfrac{1}{2}\eta_{k-1}^{l}-\eta_{k-1}^{l-1}](g_{k-1})
\Bigg]\Bigg|.
\end{eqnarray*}
As $T_3$ (resp.~$T_4$) can be dealt with using similar arguments to $T_{6}$ (resp.~$T_{5}$) we only prove bounds for $T_3$ (resp.~$T_4$). For $T_3$ we have the upper-bound $T_3\leq\sum_{j=1}^4 T_{j+6}$ where
\begin{eqnarray*}
T_7 & = & \Bigg|\mathbb{E}\Bigg[\Bigg\{\frac{1}{\eta_{k-1}^{N_l,l-1}(g_{k-1})} -
\frac{1}{\eta_{k-1}^{l-1}(g_{k-1})} 
\Bigg\}
\Big\{
[\tfrac{1}{2}\eta_{k-1}^{N_l,l}+\tfrac{1}{2}\eta_{k-1}^{N_l,l,a}-\eta_{k-1}^{N_l,l-1}](g_{k-1}P(\varphi)) - \\ & & 
[\eta_{k-1}^{l}-\eta_{k-1}^{l-1}](g_{k-1}P(\varphi))
\Big\}
\Bigg]\Bigg|\\
T_8 & = & \Bigg|\mathbb{E}\Bigg[
\frac{1}{\eta_{k-1}^{l-1}(g_{k-1})}\Big\{
[\tfrac{1}{2}\eta_{k-1}^{N_l,l}+\tfrac{1}{2}\eta_{k-1}^{N_l,l,a}-\eta_{k-1}^{N_l,l-1}](g_{k-1}P(\varphi)) - 
[\eta_{k-1}^{l}-\eta_{k-1}^{l-1}](g_{k-1}P(\varphi))
\Big\} \Bigg]\Bigg|\\
T_9 & = & \Bigg|\mathbb{E}\Bigg[
[\eta_{k-1}^{l}-\eta_{k-1}^{l-1}](g_{k-1}P(\varphi))
\Bigg\{\frac{1}{\eta_{k-1}^{N_l,l-1}(g_{k-1})\eta_{k-1}^{l-1}(g_{k-1})} -
\frac{1}{\eta_{k-1}^{l-1}(g_{k-1})^2}
\Bigg\}\times\\ & &[\eta_{k-1}^{l-1}-\eta_{k-1}^{N_l,l-1}](g_{k-1})
\Bigg]\Bigg|\\
T_{10} & = & \Bigg|\mathbb{E}\Bigg[\frac{[\eta_{k-1}^{l}-\eta_{k-1}^{l-1}](g_{k-1}P(\varphi))}{\eta_{k-1}^{l-1}(g_{k-1})^2}
[\eta_{k-1}^{l-1}-\eta_{k-1}^{N_l,l-1}](g_{k-1})
\Bigg]\Bigg|
\end{eqnarray*}
For $T_7$ one can use Cauchy-Schwarz, the lower-bound on $g_{k-1}$, \cite[Proposition C.6]{mlpf} and Lemma \ref{lem:ml_conv2}. For $T_8$ we can apply the lower-bound on $g_{k-1}$ and the induction hypothesis. These arguments yield
$$
\max\{T_7,T_8\} \leq \frac{C}{N_l}\left(\Delta_l
+ \frac{\Delta_l^{\tfrac{1}{2}-\varepsilon}}{N_l}
\right)^{\tfrac{1}{2}}.
$$
For $T_9$ we can use \cite[Lemma D.2]{mlpf}, Cauchy-Schwarz, the lower-bound on $g_{k-1}$ and \cite[Proposition C.6]{mlpf} twice. For $T_{10}$ we can use \cite[Lemma D.2]{mlpf}, the lower-bound on $g_{k-1}$ and standard bias results for particle filters. This gives
$$
\max\{T_9,T_{10}\} \leq \frac{C\Delta_l}{N_l}.
$$
Collecting the above arguments, we have shown that
$$
T_3 \leq \frac{C}{N_l}\left(\Delta_l
+ \frac{\Delta_l^{\tfrac{1}{2}-\varepsilon}}{N_l}
\right)^{\tfrac{1}{2}}.
$$
For $T_4$ we have the upper-bound $T_4\leq\sum_{j=1}^3 T_{j+10}$ where
\begin{eqnarray*}
T_{11} & = & \Bigg|\mathbb{E}\Bigg[\frac{1}{\eta_{k-1}^{N_l,l}(g_{k-1})\eta_{k-1}^{N_l,l-1}(g_{k-1})}\tfrac{1}{2}
\{[\eta_{k-1}^{N_l,l}-\eta_{k-1}^{N_l,l,a}](g_{k-1}P(\varphi))\}\{[\eta_{k-1}^{N_l,l-1}-\eta_{k-1}^{N_l,l}](g_{k-1}) - \\ & & 
[\eta_{k-1}^{l-1}-\eta_{k-1}^{l}](g_{k-1})
\}
\Bigg]\Bigg| \\
T_{12} & = & \Bigg|\mathbb{E}\Bigg[
\tfrac{1}{2}
\{[\eta_{k-1}^{N_l,l}-\eta_{k-1}^{N_l,l,a}](g_{k-1}P(\varphi))\}[\eta_{k-1}^{l-1}-\eta_{k-1}^{l}](g_{k-1})
\Bigg\{
\frac{1}{\eta_{k-1}^{N_l,l}(g_{k-1})\eta_{k-1}^{N_l,l-1}(g_{k-1})}- \\ & & 
\frac{1}{\eta_{k-1}^{l}(g_{k-1})\eta_{k-1}^{l-1}(g_{k-1})}
\Bigg\}
\Bigg]\Bigg| \\
T_{13} & = & \Bigg|\mathbb{E}\Bigg[
\frac{[\eta_{k-1}^{l-1}-\eta_{k-1}^{l}](g_{k-1})}{\eta_{k-1}^{l}(g_{k-1})\eta_{k-1}^{l-1}(g_{k-1})}
\{[\eta_{k-1}^{N_l,l}-\eta_{k-1}^{N_l,l,a}](g_{k-1}P(\varphi))\}
\Bigg]\Bigg|.
\end{eqnarray*}
For $T_{11}$ one can use the lower-bound on $g_{k-1}$, Cauchy-Schwarz, Remark \ref{rem:ml_conv1} and Lemma 
\ref{lem:ml_conv2}. For $T_{12}$ we can apply \cite[Lemma D.2]{mlpf}, Cauchy-Schwarz, the lower-bound on $g_{k-1}$ and \cite[Proposition C.6]{mlpf} and Remark \ref{rem:ml_conv1}. For $T_{13}$ we can use he lower-bound on $g_{k-1}$, Remark \ref{rem:bias1} and  \cite[Lemma D.2]{mlpf}. Putting these results together, we have established that
$$
\max\{T_{11},T_{12},T_{13}\} \leq  \frac{C}{N_l}\Bigg\{\left(\Delta_l
+ \frac{\Delta_l^{\tfrac{1}{2}-\varepsilon}}{N_l}
\right)^{\tfrac{1}{2}} +
\left(\Delta_l^{\tfrac{1}{2}}
+ \frac{\Delta_l^{\tfrac{1}{2}-\varepsilon}}{N_l}
\right)
\Bigg\}
$$
and thus we can deduce the same (up-to a constant) upper-bound for $T_4$.
As a result of the above arguments, we have established that
\begin{equation}\label{eq:part_conv_bias5}
T_2 \leq  \frac{C}{N_l}\Bigg\{\left(\Delta_l
+ \frac{\Delta_l^{\tfrac{1}{2}-\varepsilon}}{N_l}
\right)^{\tfrac{1}{2}} +
\left(\Delta_l^{\tfrac{1}{2}}
+ \frac{\Delta_l^{\tfrac{1}{2}-\varepsilon}}{N_l}
\right)
\Bigg\}.
\end{equation}
The proof can be completed by combining \eqref{eq:part_conv_bias3} and \eqref{eq:part_conv_bias5}.
\end{proof}

\begin{theorem}\label{theo:bias}
Assume (A\ref{ass:diff1}-\ref{ass:g}). Then for any $(k,\varphi)\in\mathbb{N}\times
\mathcal{B}_b(\mathsf{X})\cap\mathcal{C}_b^2(\mathsf{X},\mathbb{R})$ there exists a $C<+\infty$ such that for any $(l,N_l,\varepsilon)\in\mathbb{N}^2\times(0,1/2)$
$$
\left|\mathbb{E}\left[[\pi_k^l-\pi_{k}^{l-1}]^{N_l}(\varphi) - 
[\pi_k^l-\pi_{k}^{l-1}](\varphi)\right]\right| \leq \frac{C}{N_l}\Bigg\{\left(\Delta_l
+ \frac{\Delta_l^{\tfrac{1}{2}-\varepsilon}}{N_l}
\right)^{\tfrac{1}{2}} +
\left(\Delta_l^{\tfrac{1}{2}}
+ \frac{\Delta_l^{\tfrac{1}{2}-\varepsilon}}{N_l}
\right)
\Bigg\}.
$$
\end{theorem}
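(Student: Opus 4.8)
The plan is to reduce the statement to the predictor-level bias bound of Lemma~\ref{lem:bias2}, exploiting that the weighted filter estimators appearing in $[\pi_k^l-\pi_k^{l-1}]^{N_l}(\varphi)$ are exactly ratios of the unweighted empirical measures $\eta_k^{N_l,\cdot}$. First I would record the identities
$$
\pi_k^{N_l,l}(\varphi)=\frac{\eta_k^{N_l,l}(g_k\varphi)}{\eta_k^{N_l,l}(g_k)},\qquad \pi_k^{N_l,l,a}(\varphi)=\frac{\eta_k^{N_l,l,a}(g_k\varphi)}{\eta_k^{N_l,l,a}(g_k)},\qquad \bar{\pi}_k^{N_l,l-1}(\varphi)=\frac{\eta_k^{N_l,l-1}(g_k\varphi)}{\eta_k^{N_l,l-1}(g_k)},
$$
together with $\pi_k^l(\varphi)=\eta_k^l(g_k\varphi)/\eta_k^l(g_k)$. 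Under (A\ref{ass:g}) and since $\varphi\in\mathcal{B}_b(\mathsf{X})\cap\mathcal{C}_b^2(\mathsf{X},\mathbb{R})$, the test function $g_k\varphi$ again lies in $\mathcal{B}_b(\mathsf{X})\cap\mathcal{C}_b^2(\mathsf{X},\mathbb{R})$ and $g_k$ is bounded above and below, so Lemma~\ref{lem:bias2} and Remark~\ref{rem:bias1} are applicable with $g_k\varphi$ in place of $\varphi$.

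Next I would write the limiting difference in antithetic form, $[\pi_k^l-\pi_k^{l-1}](\varphi)=\tfrac12\tfrac{\eta_k^l(g_k\varphi)}{\eta_k^l(g_k)}+\tfrac12\tfrac{\eta_k^l(g_k\varphi)}{\eta_k^l(g_k)}-\tfrac{\eta_k^{l-1}(g_k\varphi)}{\eta_k^{l-1}(g_k)}$, and apply Lemma~\ref{lem:tech_lem_rat} with $(G^l,f^l)=(\eta_k^{N_l,l}(g_k\varphi),\eta_k^{N_l,l}(g_k))$ and analogously for the $a$ and $l-1$ labels, subtracting the limiting version of the same identity. This decomposes $[\pi_k^l-\pi_k^{l-1}]^{N_l}(\varphi)-[\pi_k^l-\pi_k^{l-1}](\varphi)$ into a leading term of the shape $\tfrac{1}{\eta_k^{N_l,l-1}(g_k)}\big([\tfrac12\eta_k^{N_l,l}+\tfrac12\eta_k^{N_l,l,a}-\eta_k^{N_l,l-1}](g_k\varphi)-[\eta_k^l-\eta_k^{l-1}](g_k\varphi)\big)$ plus a fixed number of quadratic correction terms, each a product of two small differences of empirical/limit measures — exactly the $T_2,T_3,T_4$ structure from the proof of Lemma~\ref{lem:ml_conv2} and the $T_3$--$T_4$ analysis from the proof of Lemma~\ref{lem:bias2}. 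For the leading term I would replace the random $1/\eta_k^{N_l,l-1}(g_k)$ by the deterministic $1/\eta_k^{l-1}(g_k)$ (the remainder being yet another quadratic correction, controlled by Cauchy--Schwarz, the lower bound on $g_k$, and \cite[Proposition C.6]{mlpf}) and then invoke Lemma~\ref{lem:bias2} directly; this produces precisely the advertised right-hand side.

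For the quadratic corrections I would take expectations and, exactly as in the $T_2$--$T_4$ steps of the proof of Lemma~\ref{lem:bias2}, handle each product by: using the standard $N_l^{-1/2}$ $\mathbb{L}_2$ bound for particle filters (\cite[Proposition C.6]{mlpf}) and Cauchy--Schwarz whenever a factor is a mean-zero particle fluctuation $[\eta_k^{N_l,\cdot}-\eta_k^{\cdot}](g_k)$; using Lemma~\ref{lem:ml_conv1}, Lemma~\ref{lem:ml_conv2} or Remark~\ref{rem:ml_conv1} whenever a factor is a coupled difference $[\eta_k^{N_l,l}-\eta_k^{N_l,l-1}](\varphi)-[\eta_k^l-\eta_k^{l-1}](\varphi)$ or an antithetic difference $[\eta_k^{N_l,l}-\eta_k^{N_l,l,a}](\varphi)$; using \cite[Lemma D.2]{mlpf} for the deterministic pieces $[\eta_k^l-\eta_k^{l-1}](g_k)=\mathcal{O}(\Delta_l)$; and using Lemma~\ref{lem:bias2} or Remark~\ref{rem:bias1} for the two genuinely first-order biases $\mathbb{E}\big[[\eta_k^{N_l,l}-\eta_k^{N_l,l-1}](g_k\varphi)-[\eta_k^l-\eta_k^{l-1}](g_k\varphi)\big]$ and $\mathbb{E}\big[[\eta_k^{N_l,l}-\eta_k^{N_l,l,a}](g_k\varphi)\big]$. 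A short check then shows each correction is $\mathcal{O}\!\big(N_l^{-1}\{(\Delta_l+\Delta_l^{1/2-\varepsilon}/N_l)^{1/2}+(\Delta_l^{1/2}+\Delta_l^{1/2-\varepsilon}/N_l)\}\big)$ or smaller, and collecting the bounds completes the proof.

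The main obstacle is the careful bookkeeping of the quadratic corrections that couple an antithetic empirical difference $[\eta_k^{N_l,l}-\eta_k^{N_l,l,a}]$ with a fine/coarse empirical difference $[\eta_k^{N_l,l-1}-\eta_k^{N_l,l}]$: crudely pairing two $\mathbb{L}_2$ bounds only yields $N_l^{-1}$ times roughly $\Delta_l^{1/4}$, which is too weak, and one must instead — as in the $T_{11}$--$T_{13}$ analysis in the proof of Lemma~\ref{lem:bias2} — first peel off the deterministic part $[\eta_k^{l-1}-\eta_k^l](g_k)=\mathcal{O}(\Delta_l)$ via \cite[Lemma D.2]{mlpf}, and for the remaining fluctuation arrange that in each product at least one factor is controlled by its (smaller) \emph{bias} via Remark~\ref{rem:bias1} or Lemma~\ref{lem:bias2} rather than merely by its $\mathbb{L}_2$ norm. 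Getting this interplay of biases and $\mathbb{L}_2$ estimates right, term by term, is the delicate part; everything else is routine and parallels the proof of Lemma~\ref{lem:bias2}.
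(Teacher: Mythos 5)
Your proposal is correct and follows essentially the same route as the paper: the paper's (omitted) proof of Theorem \ref{theo:bias} is precisely to write the filter estimators as ratios of the $\eta_k^{N_l,\cdot}$ measures, apply the decomposition of Lemma \ref{lem:tech_lem_rat} as in the treatment of \eqref{eq:part_conv_bias4}, control the leading term via Lemma \ref{lem:bias2} (with test function $g_k\varphi$, admissible under (A\ref{ass:g})), and bound the quadratic corrections with Cauchy--Schwarz, \cite[Proposition C.6, Lemma D.2]{mlpf}, Lemmata \ref{lem:ml_conv1}--\ref{lem:ml_conv2} and Remarks \ref{rem:ml_conv1}, \ref{rem:bias1}. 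Your observation that the antithetic-times-coarse/fine cross terms must be handled by peeling off the $\mathcal{O}(\Delta_l)$ deterministic part and using bias rather than raw $\mathbb{L}_2$ bounds matches exactly the $T_{11}$--$T_{13}$ analysis in the proof of Lemma \ref{lem:bias2}.
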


\begin{proof}
This an be proved using Lemma \ref{lem:bias2}, Remark \ref{rem:bias1} and the approach used to deal with \eqref{eq:part_conv_bias4} in the proof of Lemma \ref{lem:bias2}. The proof is omitted for brevity.
\end{proof}


\begin{thebibliography}{99}

\bibitem{beskos}
{\sc Beskos}, A., 
{\sc Jasra}, A., 
{\sc Law}, K. J. H,, 
{\sc Tempone}, R., \& 
{\sc Zhou}, Y.~ (2017). 
Multilevel Sequential Monte Carlo samplers. {\em Stoch. Proc. Appl.}, {\bf 127}, 1417-1440.

\bibitem{beskos1}
{\sc Beskos}, A., \& {\sc Roberts}, G.~(2005). Exact simulation of diffusions. \emph{Ann. Appl. Probab.},
{\bf 15}, 2422-2444.

\bibitem{blanchet}
{\sc Blanchet}, J. \& {\sc Zhang}, F.~(2020). Exact Simulation for Multivariate Ito Diffusions. 
\emph{Adv. Appl. Probab.}, {\bf 52}, 1003-1034.

\bibitem{cappe}
{\sc Capp\'e}, O., {\sc Ryden}, T, \& {\sc Moulines}, \'E.~(2005). \emph{Inference
in Hidden Markov Models}. Springer: New York.


\bibitem{delmoral1}
{\sc Del Moral}, P.~(2013).
\emph{Mean Field Simulation for Monte Carlo Integration}
Chapman \& Hall: London.

\bibitem{delm}
{\sc Del Moral}, P., {\sc Jacod}, J., \& {\sc Protter}, P.~(2001). 
The Monte-Carlo method for filtering with discrete-time observations.
\emph{Probab.~Theory Rel.~Fields},
{\bf 120}, 346--368.

\bibitem{fearn}
{\sc Fearnhead}, P., 
{\sc Papaspiliopoulos}, O. \& {\sc Roberts}, G. O.~(2008). 
Particle filters for partially observed diffusions. \emph{J. R. Stat. Soc. Ser. B} {\bf 70}, 755--777.

\bibitem{giles}
{\sc Giles}, M. B.~(2008). Multilevel Monte Carlo path simulation. \emph{Op. Res.}, {\bf 56}, 607-617.

\bibitem{giles1}
{\sc Giles}, M. B.~(2015)  Multilevel Monte Carlo methods. \emph{Acta Numerica} {\bf 24}, 259-328.


\bibitem{ml_anti}
{\sc Giles}, M. B.,  \& {\sc Szpruch}, L.~(2014).
Antithetic multilevel Monte Carlo estimation for multidimensional SDEs without Levy area simulation. \emph{Ann. Appl. Probab.} {\bf 24}, 1585-1620.

\bibitem{hein}
{\sc Heinrich}, S.~(2001).
{Multilevel {Mo}nte {C}arlo methods}.
In \emph{Large-Scale Scientific Computing}, (eds.~S. Margenov, J. Wasniewski \&
P. Yalamov), Springer: Berlin.


\bibitem{mlpf}
{\sc Jasra}, A., {\sc Kamatani}, K., {\sc Law} K. J. H. \& {\sc Zhou}, Y.~(2017). 
Multilevel particle filters. \emph{SIAM J. Numer. Anal.}, {\bf 55}, 3068-3096.

\bibitem{mlpf1}
{\sc Jasra}, A., {\sc Kamatani}, K., {\sc Osei}, P. P., \& {\sc Zhou}, Y. (2018). Multilevel particle filters: Normalizing Constant Estimation. \emph{Statist. Comp.},
{\bf 28}, 47-60.

\bibitem{levymlpf}
{\sc Jasra}, A., {\sc Law} K. J. H. \& {\sc Osei}, P. P.~(2019).
Multilevel particle filters for L\'evy driven stochastic differential equations.
\emph{Statist. Comp.},
{\bf 29}, 775-789.

\bibitem{ml_rev}
{\sc Jasra}, A., {\sc Law} K. J. H. \& {\sc Suciu}, C.~(2020).  Advanced Multilevel Monte Carlo. \emph{Intl. Stat. Rev.}, {\bf 88}, 548-579.

\bibitem{ub_pf}
{\sc Jasra}, A., {\sc Law}, K. J. H. \& {\sc Yu}, F.~(2022). Unbiased filtering of a class of partially observed diffusions.
\emph{Adv. Appl. Probab.} {\bf 54}, 661-687.

\bibitem{rogers}
{\sc Rogers}, L. C. G.~\& {\sc Williams}, D.~(2000). \emph{Diffusions, Markov Process and Martingales, Vol 2}. Cambridge: CUP.

\bibitem{stroock}
{\sc Stroock}, D. W.~(2008). \emph{Partial Differential Equations for Probabilists}. Cambridge: CUP.


\end{thebibliography}
\end{document}